\newtheorem{lemma}{Lemma}[section]
\newtheorem{proposition}[lemma]{Proposition}
\newtheorem{theorem}[lemma]{Theorem}
\newtheorem{corollary}[lemma]{Corollary}
\newtheorem{definition}[lemma]{Definition}
\newcommand{\im}{\mathrm{Im}}
\newcommand{\Kahler}{K\"ahler\,\,}
\newcommand{\eqand}{\textrm{\,\,\,\,\,\,\,\,\,\,\,\,and\,\,\,\,\,\,\,\,\,\,\,\,}}
\newcommand{\vs}{\vskip 0.2cm}
\renewcommand*\env@matrix[1][*\c@MaxMatrixCols c]{%
  \hskip -\arraycolsep
  \let\@ifnextchar\new@ifnextchar
  \array{#1}}
\begin{document}

\title{Multiplicities of the Betti map associated to a section of an elliptic surface from a differential-geometric perspective}

\author{Ngaiming Mok\footnote{Department of Mathematics, The University of Hong Kong, Hong Kong, People's Republic of China. \textbf{Email:}~nmok@hku.hk},\,\, Sui-Chung Ng\footnote{School of Mathematical Sciences, Shanghai Key Laboratory of PMMP, East China Normal University, Shanghai, People's Republic of China. \textbf{Email:}~scng@math.ecnu.edu.cn}}

\date{}

\maketitle

\noindent
\begin{abstract}

\noindent
For the study of the Mordell-Weil group of an elliptic curve ${\bf E}$ over a complex function field of a projective curve $B$, the first author introduced the use of differential-geometric methods arising from K\"ahler metrics on $\mathcal H \times \mathbb C$ invariant under the action of the semi-direct product ${\rm SL}(2,\mathbb R) \ltimes \mathbb R^2$. To a properly chosen geometric model $\pi: \mathcal E \to B$ of ${\bf E}$ as an elliptic surface and a non-torsion holomorphic section $\sigma: B \to \mathcal E$ there is an associated ``verticality'' $\eta_\sigma$ of $\sigma$ related to the locally defined Betti map.  The first-order linear differential equation satisfied by $\eta_\sigma$, expressed in terms of invariant metrics, is made use of to count the zeros of $\eta_\sigma$, in the case when the regular locus $B^0\subset B$ of $\pi: \mathcal E \to B$ admits a classifying map $f_0$ into a modular curve for elliptic curves with level-$k$ structure, $k \ge 3$, explicitly and linearly in terms of the degree of the ramification divisor $R_{f_0}$ of the classifying map, and the degree of the log-canonical line bundle of $B^0$ in $B$. Our method highlights ${\rm deg}(R_{f_0})$ in the estimates, and recovers the effective estimate obtained by a different method of Ulmer-Urz\'ua on the multiplicities of the Betti map associated to a non-torsion section, noting that the finiteness of zeros of $\eta_\sigma$ was due to Corvaja-Demeio-Masser-Zannier. The role of $R_{f_0}$ is natural in the subject given that in the case of an elliptic modular surface there is no non-torsion section by a theorem of Shioda, for which a differential-geometric proof had been given by the first author.  Our approach sheds light on the study of non-torsion sections of certain abelian schemes.

\end{abstract}

\section{Introduction}

For the study of a polarized family of abelian varieties over a quasi-projective manifold $\pi_0: \mathcal A^0 \to X^0$, together with a nonsingular projective compactification $\pi: \mathcal A \to X$, one of the authors introduced in~\cite{Mo} the use of methods of complex differential geometry by making use of K\"ahler metrics first introduced by Satake~\cite{Sa} on $\mathcal H_g \times \mathbb C^g$, for the Siegel upper half plane $\mathcal H_g$ of genus $g$, invariant under the standard action of the semidirect product Sp$(g,\mathbb R) \ltimes \mathbb R^{2g}$.  For the case of $g = 1$, let $\Gamma = \Gamma(k) \subset {\rm SL}(2,\mathbb Z)$ be a principal congruence subgroup for some $k \ge 3, X_\Gamma^0 := \mathcal H/\Gamma, \ {\rm and} \ X_\Gamma^0 \subset X_\Gamma$ be a smooth projective compactification of $X_\Gamma$. In~\cite{Mo} a differential-geometric proof of the theorem of Shioda~\cite{Sho} was given yielding the finiteness of the Mordell-Weil group of the universal elliptic curve ${\bf E}$ over the function field $\mathbb C(X_\Gamma)$.  For the study of the universal elliptic curve one makes use of a geometric model $\pi: \mathcal E_\Gamma \to X_\Gamma$, an elliptic modular surface, and realizes the abelian group of rational points of ${\bf E}$ over $\mathbb C(X_\Gamma)$ as the group of holomorphic sections of $\mathcal E_\Gamma$ over $X_\Gamma$. In~\cite{Mo}, to any such section $\sigma$ there was an associated ``vertical'' part $\eta_\sigma$ of $d\sigma$ which was proven to be square-integrable with respect to a natural Hermitian metric $\theta$ on $K_{X^0_\Gamma}\otimes V$, where $V$ is a square root of the holomorphic tangent bundle $T(X^0_\Gamma)$.  The tensor $\eta_\sigma$ was further proven to satisfy an eigensection equation $\overline{\nabla}^*\overline{\nabla}\eta_\sigma = -\eta_\sigma$ for covariant differentiation $\overline{\nabla}$ in the $(0,1)$-direction on $(K_{X^0_\Gamma}\otimes V,\theta)$, from which it was concluded that $\eta_\sigma = 0$, so that $\sigma$ is a torsion section, hence the finiteness of the Mordell-Weil group ${\bf E}(\mathbb C(X_\Gamma))$.

\vs
We will call $\eta_\sigma$ the verticality of $\sigma$ in the current article. For a locally non-isotrivial elliptic surface $\pi: \mathcal E \to B$ arising from a classifying map $f_0: B^0 \to X^0_\Gamma$, where $B^0 \subset B$ is a dense Zariski open subset, it was proven also in~\cite{Mo} that rank(${\bf E}(\mathbb C(B)))$ is bounded by $2\deg(R_{f_0})$, where $\deg(R_{f_0})$ stands for the degree of the ramification divisor $R_{f_0}$ of the classifying map $f_0: B^0 \to X^0_\Gamma$, again by examining the verticalities $\eta_\sigma$ of holomorphic sections of $\pi: \mathcal E \to B$. The authors believe that the invariant K\"ahler metrics of Satake will be applicable to the study of geometric and arithmetic problems concerning abelian schemes over quasi-projective varieties in general, and it was in this context that we found the recent finiteness result of Corvaja-Demeio-Masser-Zannier~\cite{CDMZ} on the number of points of Betti multiplicity $\ge 2$ for a non-torsion holomorphic section $\sigma$ of $\pi: \mathcal E \to B$, a good place to examine the role that can be played by differential-geometric methods.  The verticality $\eta_\sigma$ for a holomorphic section of $\pi: \mathcal E \to B$ turns out to correspond to the differential of the Betti map studied in~\cite{CDMZ}.  On the other hand, by~\cite{Mo} one can derive a {\it holomorphic} tensor $\xi = \nabla \eta_\sigma$ from the verticality $\eta_\sigma$ in view of the identity $\overline{\partial}\nabla\eta_\sigma = 0$ on $B^0$ outside of the ramification divisor $R_{f_0}$, and this gives an effective bound on the sum of multiplicities minus 2 over points at which the Betti map has multiplicities $\ge 3$, linearly in terms of ${\rm deg}(R_{f_0})$ and $\deg(f^*(K_{X_\Gamma}\otimes S_{X_\Gamma})^{\frac{1}{2}}))$, where $f:B\rightarrow X_\Gamma$ is the unique extension of $f_0$ and $S_{X_\Gamma}$ stands for the divisor $X_\Gamma\setminus X^0_\Gamma$, and this gave hope that it might be possible to render the aforementioned finiteness result~\cite{CDMZ} effective in terms of ${\rm deg}(R_{f_0})$ and $\deg(f^*(K_{X_\Gamma}\otimes S_{X_\Gamma})^{\frac{1}{2}}))$. Here the difference is that $\eta_\sigma$ is not holomorphic.  In place of holomorphicity we make use of a first order differential equation satisfied by $\eta_\sigma$, which comes from Mok-To~\cite{MT} (p.52 therein), for the general case of abelian schemes, and which is the source of the elliptic eigensection equations satisfied by $\eta_\sigma$ in the special case of non-isotrivial elliptic surfaces. 

\vs
After the bulk of the work has been finished, we realized that Ulmer-Urz\'ua~\cite{UU} had already obtained an upper bound for the sum in question for any non-torsion section $\sigma$ of an  arbitrary elliptic surface $\pi: \mathcal E \to B$.  The estimate obtained in~\cite{UU} can be deduced from our result on elliptic surfaces with classifying maps, while our result gives structure to the estimates in terms of the ramification divisor in the spirit of~\cite{Mo}.  As our method is quite different from that of~\cite{UU}, we deem it useful to present the differential-geometric approach of the current article, in the belief that the methodology will be applicable to the further research on the Betti map for abelian schemes, e.g., for those in which the classifying map is a generically finite dominant map over some modular variety, such as Shimura curves, Hilbert modular varieties or Siegel modular varieties.

Let $k\geq 3$ be a positive integer. By the canonical action of $ \mathrm{SL}(2,\mathbb Z)\ltimes\mathbb Z^2$ on $\mathcal H\times\mathbb C$, where $\mathcal H$ is the upper half plane, there is the modular family of elliptic curves with level $k$ structure $\pi_k:\mathcal M^0_{\Gamma(k)}:=(\mathcal H\times\mathbb C)/(\Gamma(k)\ltimes\mathbb Z^2)\rightarrow X^0_{\Gamma(k)}:=\mathcal H/\Gamma(k)$, which can be compactified to a relatively minimal elliptic surface $\pi:\mathcal M_{\Gamma(k)}\rightarrow X_{\Gamma(k)}$ over the modular curve $X_{\Gamma(k)}$ using the toroidal compactification~\cite{AMRT}. We will call such $\mathcal M_{\Gamma(k)}$ an \textit{elliptic modular surface}. In this article, we say that an elliptic surface $\pi:\mathcal E\rightarrow B$ over a complex projective curve $B$ \textit{has a classifying map} if there exists a non-constant holomorphic map $f:B\rightarrow X_{\Gamma(k)}$  for some $k\geq 3$, such that the open dense subset $\mathcal E^0\subset\mathcal E$ consisting of the regular fibers is isomorphic to $f|_{B^0}^*\mathcal M_{\Gamma(k)}^0$, where $B^0=\pi(\mathcal E^0)\subset B$. Our main result is an integral formula for the total Betti multiplicity of a given non-torsion section for an elliptic surface with a classifying map:

\begin{theorem}\label{integral formula}
Let $\mathcal E\rightarrow B$ be an elliptic surface over a complex projective curve $B$ with a classifying map $f:B\rightarrow X$ of degree $d$, where $X=X_{\Gamma(k)}$ for some $k\geq 3$. Let $\sigma$ be a non-torsion section of $\mathcal E$ and $m_b$ be the Betti multiplicity of $\sigma$ at $b$, then
$$
\sum_{b\in B}(m_b-1)=\sum_{b\in B\setminus S}(r_b-1)
+\dfrac{d}{2\pi}\int_{X^0}\omega,
$$
where $X^0=X^0_{\Gamma(k)}$ and $S=f^{-1}(X\setminus X^0)$; $r_b$ is the ramification index of $f$ at $b$ and $\omega$ is the K\"ahler form on $X^0$ descending from the invariant form $-i\partial\bar\partial\log\im\tau$ on $\mathcal H$.
\end{theorem}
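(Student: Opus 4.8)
The plan is to realize the verticality $\eta_\sigma$ as a smooth nowhere-zero multiple of the $(1,0)$-form $h\,dz=\partial\beta_2$ coming from one of the Betti coordinates of $\sigma$, to use the first-order equation of Mok--To to see that $h$ is a generalized analytic (Bers--Vekua) function --- so that its zeros have well-defined positive integer multiplicities, equal to the $m_b-1$ --- and then to count those zeros globally by a Poincar\'e--Lelong argument, in which the $K_{B^0}$-factor of $\eta_\sigma$ contributes $2g_B-2+\#S$ and the $f^*V$-factor contributes $-\tfrac d{2\pi}\int_{X^0}\omega$; a Riemann--Hurwitz rearrangement then puts the total into the stated shape.

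First I would set up the local picture. On a simply connected $U\subset B^0$ with coordinate $z$, write the period as $\tau=\tau\circ f$ and lift $\sigma$ to $u=\beta_1+\beta_2\tau$ with $\beta_1,\beta_2$ real-valued; holomorphicity of $\sigma$ forces $\partial_{\bar z}\beta_1=-\tau\,\partial_{\bar z}\beta_2$, hence $\partial\beta_1=-\bar\tau\,\partial\beta_2$ and
$$
d\beta=(\partial\beta_2)\,(-\bar\tau,1)+\overline{(\partial\beta_2)}\,(-\tau,1),
$$
so that $d\beta$ drops real rank exactly at the zeros of $h:=\partial\beta_2/\partial z$, and $m_b-1$ is, in the sense of \cite{CDMZ}, the order of vanishing of $h$ at $b$ --- once that makes sense. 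As in \cite{Mo}, $\eta_\sigma$ is, up to a smooth nowhere-zero frame, the section $h\,dz$ of $K_{B^0}\otimes f^*V$ with $V^{\otimes 2}=TX^0$, and $\eta_\sigma\not\equiv0$ since a non-torsion section has non-constant Betti map. The Mok--To equation (\cite{MT}, p.~52) here takes the form
$$
\frac{\partial h}{\partial\bar z}=\frac{\overline{\tau_z}\,h-\tau_z\,\bar h}{2i\,\im\tau},
$$
i.e. a Bers--Vekua equation $\bar\partial h+Ah+B\bar h=0$ with $A,B$ smooth on $B^0$ (and vanishing to order $r_b-1$ at ramification points of $f$); by the similarity principle $h=e^{s}g$ locally with $g$ holomorphic and $s$ H\"older, so $\eta_\sigma$ has isolated zeros of well-defined positive integral order, namely those of the holomorphic factors $g$, which are the $m_b-1$ on $B^0$. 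This is the source of the identity $\bar\partial\nabla\eta_\sigma=0$ on $B^0\setminus R_{f_0}$ used in the Introduction, and, together with \cite{CDMZ} (or the $S$-analysis below), it makes $\sum_{b\in B^0}\mathrm{ord}_b(\eta_\sigma)$ a finite sum.

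Then I would do the count. As $\log|\eta_\sigma|^2$ is locally $\log|g|^2$ plus a H\"older function, Poincar\'e--Lelong applies to $\eta_\sigma$: taking on $K_{B^0}$ the metric induced from a smooth metric on $K_B$ (with its boundary contribution at $S$) and on $f^*V$ the pullback of the invariant metric of \cite{Mo}, one gets
$$
\sum_{b\in B^0}\mathrm{ord}_b(\eta_\sigma)=\int_{B^0}c_1(K_{B^0})+\int_{B^0}c_1(f^*V)+(\text{boundary at }S).
$$
The $K_{B^0}$-curvature plus the boundary term equals $\deg K_B(S)=2g_B-2+\#S$; and since the curvature of the invariant metric on $V$ is $-\tfrac1{2\pi}\omega$ with $\omega=-i\partial\bar\partial\log\im\tau$, pulling back along the degree-$d$ map $f\colon B^0\to X^0$ gives $\int_{B^0}c_1(f^*V)=-\tfrac d{2\pi}\int_{X^0}\omega$. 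By Riemann--Hurwitz for $f$ (subtracting its ramification over the cusps) $2g_B-2+\#S=d(2g_X-2+\#S_X)+\sum_{b\in B\setminus S}(r_b-1)$, and by Gauss--Bonnet $\tfrac1{2\pi}\int_{X^0}\omega=\tfrac12(2g_X-2+\#S_X)$ because $\omega=\tfrac12\omega_{\mathrm{hyp}}$ for the complete hyperbolic metric of curvature $-1$ on $X^0$; hence
$$
\sum_{b\in B^0}\mathrm{ord}_b(\eta_\sigma)=(2g_B-2+\#S)-\tfrac d{2\pi}\int_{X^0}\omega=\sum_{b\in B\setminus S}(r_b-1)+\tfrac d{2\pi}\int_{X^0}\omega.
$$
Since the terms with $b\notin B^0$ add nothing and $m_b-1=\mathrm{ord}_b(\eta_\sigma)$ on $B^0$, the left side is $\sum_{b\in B}(m_b-1)$, which is the assertion.

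The hard part will be the analysis at $S$: near each of the (necessarily $j=\infty$) cusps of $X$ met by $f$ one must control the asymptotics of $\tau\circ f$, governed by the local monodromy --- the Kodaira type of the fibre of $\mathcal E\to B$ --- and of the Betti coordinates $\beta_1,\beta_2$, so as to (i) see that the zeros of $\eta_\sigma$ do not accumulate at $S$; (ii) evaluate the boundary term in the Poincar\'e--Lelong formula and check that the logarithmic corrections present in $|\eta_\sigma|^2$ do not change it; and (iii) confirm that the singular fibres contribute nothing to $\sum_{b\in B}(m_b-1)$. This is the same boundary behaviour that underlies the square-integrability of $\eta_\sigma$ in \cite{Mo} and \cite{MT}; there the coefficients $A,B$ degenerate only like $(\log)^{-1}$ times a first-order pole of the period, so $\eta_\sigma$ stays close to holomorphic near $S$ and the extension is mild. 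Granting the $S$-analysis, together with the similarity principle and the elementary curvature and Riemann--Hurwitz bookkeeping above, completes the proof.
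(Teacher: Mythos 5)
Your overall strategy is in essence the one the paper follows: the same first-order equation $\bar\partial\eta_\sigma=-i\,\overline{\eta_\sigma}\otimes g_\star^{-1/2}$ (Proposition~\ref{dbar eta}) yields isolated zeros of order $m_b-1$, and the count is performed by integrating $\frac{i}{2\pi}\partial\bar\partial\log\|\eta_\sigma\|^2$ over the complement of small disks and identifying the bulk term with a curvature integral; your Riemann--Hurwitz rearrangement reproduces Eq.(\ref{formula 2}). Two points, however, are asserted where the paper has to work. First, ``Poincar\'e--Lelong applies to $\eta_\sigma$'' is not automatic for a non-holomorphic section: locally $\log\|\eta_\sigma\|^2=\log|g|^2+2\,\mathrm{Re}(s)+\log h$ with $s$ only H\"older, and $\partial\bar\partial\,\mathrm{Re}(s)$ is a genuine $(1,1)$-form whose total integral has no a priori reason to vanish (the local similarity factors $e^{s}$ do not patch to a global function). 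What saves the count is that $\overline\nabla\eta_\sigma/\eta_\sigma$ is a globally defined $(0,1)$-form of \emph{constant norm} by Proposition~\ref{dbar eta}, so its integrals over small circles around every puncture tend to zero and Stokes applied to this form itself kills the defect; this is Eq.(\ref{integrable prop}) ($\int_{X^0}\omega=\lim\int\chi=\lim\int\overline\chi$), the analytic heart of the argument, and it is missing from your write-up even over $B^0$. (Your remark that the coefficients degenerate like $|q\log|q||^{-1}$ near the cusps is exactly the estimate needed to run the same step at $S$.) One also has to show $\int_{\partial\Delta_\epsilon}\bar\partial\varphi/\varphi\to 0$ for the merely continuous non-vanishing factor $\varphi$, which is the content of Lemma~\ref{loop integral}.

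Second---and this is the substantive gap---the entire analysis at $S$ is deferred, and it is not merely technical: the sum $\sum_{b\in B}(m_b-1)$ ranges over all of $B$, with $m_b$ at points of bad reduction defined via the toroidal compactification (Definition~\ref{betti at cusps}); these extended multiplicities appear exactly as the cusp boundary terms in the Stokes argument, since $\|\eta_\sigma\|^2\sim|w|^{2(m_b-1)}(\log|w|)^{\pm 1}$ there (Corollary~\ref{isolated cor 2}, Lemma~\ref{loop integral}). Your item (iii), that the singular fibres contribute nothing, is therefore not what must be proved: the boundary contribution at $b\in S$ is $m_b-1$, not $0$, and your identity is correct only after these terms are recognized and moved to the left-hand side. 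Establishing the $(\log|w|)^{\pm1}$ asymptotics, the non-accumulation of zeros at $S$, and the very meaning of $m_b$ on $S$ requires the explicit coordinates of the toroidal compactification (Section~\ref{toroidal}) and occupies most of Section~\ref{counting big section}; so ``granting the $S$-analysis'' is granting roughly half the proof.
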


\noindent\textbf{Remark.} The Betti multiplicities of a section were defined only on the points of good reduction but we will extend the definition for every $b\in B$ (Definition~\ref{betti at cusps}).

When $X=X_{\Gamma(k)}$, $k\geq 3$, we have (\cite{Shm}, Theorem 2.20 therein):
\begin{equation}\label{area formula}
\dfrac{1}{2\pi}\int_{X^0}\omega=g(X)-1+\dfrac{\nu_\infty(X)}{2},
\end{equation}
where $g(X)$ is the genus of $X$ and $\displaystyle\nu_\infty(X)=\dfrac{k^2}{2}\prod_{p|k}\left(1-\dfrac{1}{p^2}\right)\in 2\mathbb N^+$ is the number of cusps on $X$. In other words, the integral in Theorem~\ref{integral formula} is just the degree of the square root of the log canonical bundle $K_{X}\otimes S_{X}$ of $X^0$ in $X$, where $S_{X}$ denotes the divisor (or divisor line bundle) corresponding to the cusps of $X$. Thus,
$$
\sum_{b\in B}(m_b-1)=\sum_{b\in B\setminus S}(r_b-1)+\deg(f^*(K_{X}\otimes S_{X})^{\frac{1}{2}}))=\deg(R_{f_0}\otimes f^*(K_{X}\otimes S_{X})^{\frac{1}{2}})),
$$
where $R_{f_0}:= \bigotimes\limits_{b \in B\setminus S}[b]^{r_b-1}$ is the ramification divisor of the classifying map $f_0: B^0 \to X^0$. As $f^*(K_X\otimes S_X)\otimes R_{f_0} \cong K_B\otimes S$, we can now rewrite the formula in Theorem~\ref{integral formula} as
\begin{equation}\label{formula 2}
\sum_{b\in B}(m_b-1)=\dfrac{1}{2}\deg(K_B\otimes S\otimes R_{f_0}).
\end{equation}

For the arithmetic aspect, a quantity of concern is the number of points of good reduction at which the Betti multiplicity is at least 2. Let $\mathfrak B_\sigma\subset B\setminus S$ be the set of points at which the Betti multiplicity is at least 2. The finiteness of $|\mathfrak B_\sigma|$ was proven by Corvaja-Demeio-Masser-Zannier in~\cite{CDMZ}. By Eq.(\ref{formula 2}), we have the following estimate 
$$
|\mathfrak B_\sigma|\leq g-1+\dfrac{|S|}{2}+\dfrac{\deg(R_{f_0})}{2},
$$
where $g$ is the genus of $B$. We remark that we have defined $S$ to be $f^{-1}(X\setminus X_0)$ and in general $S\subset S'$, where $S'$ is the set of points of bad reduction. However, if we assume that $\mathcal E$ is relatively minimal, then by the uniqueness of the relatively minimal model, we deduce that $S=S'$. In any case, we can always take $|S|$ to be the number of singular fibers on $\mathcal E$ in the estimate.

An upper bound of $|\mathfrak B_\sigma|$ in terms of the genus and the degree of a certain line bundle on $B$ has also been obtained by Ulmer-Urz\'ua~\cite{UU}. Using our Theorem~\ref{integral formula}, we will prove another estimate:
\begin{corollary}\label{inequality cor}
Following the setting of Theorem~\ref{integral formula}, then we have
$$
|\mathfrak B_\sigma|\leq 2g-2-\deg(f^*(K_{X}\otimes S_{X})^{\frac{1}{2}}))+|S|,
$$
where $g$ is the genus of $B$.
\end{corollary}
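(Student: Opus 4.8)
The plan is to deduce Corollary~\ref{inequality cor} from Theorem~\ref{integral formula} by a short counting argument, after re-expressing the right-hand side of the asserted inequality. First I would observe that, since $f^*(K_X\otimes S_X)\otimes R_{f_0}\cong K_B\otimes S$ and $S$ is reduced on the genus-$g$ curve $B$, passing to degrees gives
$$
\deg R_{f_0}=\deg(K_B\otimes S)-\deg f^*(K_X\otimes S_X)=(2g-2+|S|)-2\deg\!\big(f^*(K_X\otimes S_X)^{1/2}\big).
$$
Substituting this into the identity $\sum_{b\in B}(m_b-1)=\deg R_{f_0}+\deg\!\big(f^*(K_X\otimes S_X)^{1/2}\big)$, which is Theorem~\ref{integral formula} rewritten as in \eqref{formula 2}, yields $\sum_{b\in B}(m_b-1)=2g-2+|S|-\deg\!\big(f^*(K_X\otimes S_X)^{1/2}\big)$. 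In other words $\sum_{b\in B}(m_b-1)$ is exactly the claimed upper bound, so it suffices to prove $|\mathfrak B_\sigma|\le\sum_{b\in B}(m_b-1)$.

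For this I would split the sum along the decomposition $B=\mathfrak B_\sigma\sqcup\big((B\setminus S)\setminus\mathfrak B_\sigma\big)\sqcup S$ (recall $\mathfrak B_\sigma\subset B\setminus S$). On $\mathfrak B_\sigma$ each summand satisfies $m_b-1\ge 1$ by the definition of $\mathfrak B_\sigma$, so $\sum_{b\in\mathfrak B_\sigma}(m_b-1)\ge|\mathfrak B_\sigma|$. On the remaining good-reduction points $m_b\ge 1$, since there the Betti multiplicity is a positive integer --- in the proof of Theorem~\ref{integral formula} it appears as $1$ plus a sum of non-negative local orders, such as the ramification contribution $r_b-1$ of $f$ and the vanishing order at $b$ of the holomorphic tensor $\xi=\nabla\eta_\sigma$. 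On the cusp set $S$ one has $m_b\ge 1$ by the extension of the Betti multiplicity adopted in Definition~\ref{betti at cusps}. Adding the three pieces gives $\sum_{b\in B}(m_b-1)\ge\sum_{b\in\mathfrak B_\sigma}(m_b-1)\ge|\mathfrak B_\sigma|$, which together with the first step proves the corollary.

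The only step that is not purely formal is the inequality $m_b\ge 1$ at the cusps $b\in S$, i.e.\ that the extended Betti multiplicity of Definition~\ref{betti at cusps} never drops below $1$. If one prefers to avoid invoking this, the same counting gives $|\mathfrak B_\sigma|\le\sum_{b\in B\setminus S}(m_b-1)=\sum_{b\in B}(m_b-1)-\sum_{b\in S}(m_b-1)$, and it then only remains to check $\sum_{b\in S}(m_b-1)\ge 0$. In either formulation this is the single point requiring attention; the remainder is bookkeeping with Theorem~\ref{integral formula}, the relation $f^*(K_X\otimes S_X)\otimes R_{f_0}\cong K_B\otimes S$, and Riemann--Hurwitz. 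I would also remark that the bound obtained equals $g-1+\tfrac{1}{2}|S|+\tfrac{1}{2}\deg R_{f_0}$, so Corollary~\ref{inequality cor} is just the $R_{f_0}$-free repackaging of the estimate already recorded after \eqref{formula 2}.
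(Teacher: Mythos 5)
Your proposal is correct and takes essentially the same route as the paper: both reduce the corollary to the inequality $|\mathfrak B_\sigma|\le\sum_{b\in B}(m_b-1)$ (using $m_b\ge 1$ everywhere, including at cusps via Definition~\ref{betti at cusps}, and $m_b\ge 2$ on $\mathfrak B_\sigma$) and then evaluate the right-hand side of Theorem~\ref{integral formula} by degree bookkeeping. The only cosmetic difference is that the paper carries out the bookkeeping as an explicit Riemann--Hurwitz computation with $g(X)$ and $\nu_\infty(X)$, whereas you invoke the equivalent identity $f^*(K_X\otimes S_X)\otimes R_{f_0}\cong K_B\otimes S$ already recorded in Eq.~(\ref{formula 2}).
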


It turns out that our estimate of $|\mathfrak B_\sigma|$ is equivalent to that given by Ulmer-Urz\'ua (see the end of Section~\ref{counting section}) when $\mathcal E$ has a classifying map. For an arbitrary non-isotrivial relatively minimal elliptic surface, the equivalence will follow by a reduction argument given in Section~\ref{reduction section} using a finite cover over the base curve $B$. (Here we remark that the relative minimality of $\mathcal E$ has also been implicitly assumed in~\cite{UU}.)

Besides the integral formula above for the total Betti multiplicity, the invariant form $\mu$ can also be used to give an alternative proof of the equality of the canonical height $\hat h(\sigma)$ of a section $\sigma:B\rightarrow\mathcal E$ and the integral of $\sigma^*(d\beta_1\wedge d\beta_2)$ over $B\setminus S$. This equality has been established in~\cite{CDMZ}. (Our result differs by a sign due to a difference choice of the ordering of the Betti coordinates.) Here our proof consists of proving two things (see Section~\ref{canonical height}): (1) the invariant form $\mu$ is actually equal to $d\beta_1\wedge d\beta_2$ on $\mathcal E^0=\mathcal E|_{B\setminus S}$; (2) the integral of $\sigma^*\mu$ over $B\setminus S$ gives the canonical height $\hat h(\sigma)$. The fact that $d\beta_1\wedge d\beta_2=\mu$ will be furthermore used to demonstrate that on $\mathcal E^0$, the 2-form $d\beta_1\wedge d\beta_2$  is equal to the restriction of a certain $(1,1)$-current $T$ on $\mathcal E$ studied by DeMarco-Mavraki~\cite{DM} for problems in Arithmetic Dynamics, a relation that has been proven in~\cite{CDMZ} by different methods.

Now we briefly describe the layout of the article. In Section~\ref{mu section}, we will first recall the construction of the semi-definite K\"ahler form $\mu$ on an elliptic surface $\mathcal E$ with a classifying map. Its relationship with the Betti coordinates (or the Betti map) will then be discussed and applied to further relate it to the canonical height. Next, in Section~\ref{verticality section}, we will see that the kernel of the $(1,1)$-form $\mu$ gives a smooth distribution on the holomorphic tangent bundle of $\mathcal E^0$ (i.e. the regular part of $\mathcal E$) transversal to the relative tangent bundle, from which we get a well-defined projection onto the relative tangent bundle and this is the basis of our definition and analysis of the verticality $\eta_\sigma$ of a section $\sigma$ of $\mathcal E$. Afterwards, we will show that $\eta_\sigma$ satisfies a first order PDE, which has been obtained previously in~\cite{MT} in a slightly different form. Using this PDE we can derive many important properties of $\eta_\sigma$, including the discreteness of its zero set and its local behavior. In Section~\ref{counting big section}, we will construct a potential function $\Psi$ on a dense open subset of the projectivized tangent bundle $\mathbb P(\mathcal M)$ on an elliptic modular surface $\mathcal M$, such that its pullback by the tautological lifting of a local section will be just the norm squared of its verticality. With the potential function $\Psi$, it is easier to analyze the asymptotic behavior of $\eta_\sigma$ towards the points of bad reduction and also in the more general case with ramified classifying maps. We will then be in the position to prove our integral formula for the Betti multiplicity (Theorem~\ref{integral formula}) and the effective estimate for the number of points with higher Betti multiplicity in Corollary~\ref{inequality cor}. At the end of Section~\ref{counting big section}, we will relate our results to those obtained by Ulmer-Urz\'ua~\cite{UU}.

\section{Betti map and a semi-definite K\"ahler form on an elliptic surface}\label{mu section}

Let $\mathcal H$ be the upper half plane and consider the action of semidirect product $\mathrm{SL}(2,\mathbb R)\ltimes\mathbb R^2$ on $\mathcal H\times\mathbb C$, defined by
$$
(\tau,z)\mapsto \left(\dfrac{a\tau+b}{c\tau+d}, \dfrac{z}{c\tau+d}+\alpha+\beta\tau\right),
$$
where $(\tau, z)\in\mathcal H\times\mathbb C)$, $\begin{pmatrix}a&b\\c&d\end{pmatrix}\in \mathrm{SL}(2,\mathbb R)$ and $(\alpha,\beta)\in\mathbb R^2$. Consider a principal congruence subgroup $\Gamma:=\Gamma(k)\subset \mathrm{SL}(2,\mathbb Z)$ for some $k\geq 3$ and from the discrete subgroup $\Gamma\ltimes\mathbb Z^2$ we get a modular family of elliptic curves $\pi_\Gamma:\mathcal M^0_\Gamma\rightarrow X^0_\Gamma$, where
$\mathcal M^0_\Gamma:=(\mathcal H\times\mathbb C)/(\Gamma\ltimes\mathbb Z^2)$ and $X^0_\Gamma:=\mathcal H/\Gamma$.

There is a semi-definite K\"ahler form $\mu$ on $\mathcal H\times\mathbb C$ which is invariant under the aforementioned action of $\mathrm{SL}(2,\mathbb R)\ltimes\mathbb R^2$, given by
$$\mu=i\partial\bar\partial\dfrac{(\im z)^2}{\im\tau}.$$ The invariance and semi-definiteness of $\mu$ are easy to check and we refer the reader to~\cite{MT} for details. Since $\mu$ is invariant, it descends to a semi-definite K\"ahler form on $\mathcal M^0_\Gamma$, which will be still denoted by $\mu$ when there is no danger of confusion. Here we will explicitly describe the kernel of $\mu$, which will play an important role in our analysis of the sections of elliptic surfaces.

\begin{proposition}\label{kernel}
Fix $(\tau_0,z_0)\in\mathcal H\times\mathbb C$ and write $z_0=a+b\tau_0$, where $a,b\in\mathbb R$. Then the kernel of $\mu$ at $(\tau_0, z_0)$ is spanned by the holomorphic tangent vector of the curve $z=a+b\tau$.
\end{proposition}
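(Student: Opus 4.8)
The plan is to compute $\mu$ explicitly in the coordinates $(\tau, z)$ and read off its kernel directly. First I would write $\tau = x + iy$ and $z = u + iv$, so that $\im z = v$ and $\im \tau = y$, and expand the potential $\varphi := \frac{(\im z)^2}{\im\tau} = \frac{v^2}{y}$. Then $\mu = i\partial\bar\partial\varphi$ is the $(1,1)$-form whose coefficients are the second derivatives $\varphi_{\tau\bar\tau}$, $\varphi_{\tau\bar z}$, $\varphi_{z\bar\tau}$, $\varphi_{z\bar z}$. Using $\partial_z = \frac12(\partial_u - i\partial_v)$ and $\partial_\tau = \frac12(\partial_x - i\partial_y)$, this is a short computation: $v = \frac{1}{2i}(z - \bar z)$ and $y = \frac{1}{2i}(\tau - \bar\tau)$, so $\varphi = -\frac{(z-\bar z)^2}{2i(\tau - \bar\tau)}$, and differentiating yields a rank-one Hermitian matrix. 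Concretely one finds, up to a positive constant, that $\mu$ is proportional to $\left| dz - \frac{v}{y}\,d\tau \right|^2 \big/ y$, i.e. the Hermitian form degenerates exactly along the kernel of the $(1,0)$-form $dz - \frac{\im z}{\im \tau}\, d\tau$.

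Next I would identify this kernel geometrically. At the point $(\tau_0, z_0)$ with $z_0 = a + b\tau_0$, we have $\im z_0 = b\, \im\tau_0$, hence $\frac{\im z_0}{\im\tau_0} = b$, so the kernel of $\mu$ at $(\tau_0, z_0)$ is the line annihilated by $dz - b\, d\tau$, which is precisely the span of $\frac{\partial}{\partial\tau} + b\frac{\partial}{\partial z}$. This is exactly the holomorphic tangent vector to the curve $\tau \mapsto (\tau,\, a + b\tau)$ at $\tau = \tau_0$, since along that curve $dz = b\, d\tau$. That establishes the claim.

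The only slightly delicate point — which I would state as the main (minor) obstacle — is keeping the Wirtinger-calculus bookkeeping consistent: one must be careful that $\varphi$ is not pluriharmonic (so $i\partial\bar\partial\varphi \not\equiv 0$) and that the mixed term $\varphi_{\tau\bar z}$ is handled with the correct sign, since a sign slip there would spuriously change the slope appearing in the kernel from $b$ to $-b$ or to $\bar b$. A clean way to avoid this is to substitute $v = -\frac{i}{2}(z - \bar z)$, $y = -\frac{i}{2}(\tau-\bar\tau)$ at the outset and differentiate the resulting rational expression in $z, \bar z, \tau, \bar\tau$ formally, then re-express the answer in terms of $\im z$ and $\im\tau$. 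Alternatively, one can invoke invariance under $\mathrm{SL}(2,\mathbb R)\ltimes\mathbb R^2$: the translation $(\alpha,\beta) \in \mathbb R^2$ sends the curve $z = a + b\tau$ to $z = (a+\alpha) + (b+\beta)\tau$ and acts transitively on such curves through a fixed $\tau_0$, while fixing $\mu$; so it suffices to check the kernel at one representative, e.g. $b = 0$, $a = 0$, i.e. along $z \equiv 0$, where the computation of $i\partial\bar\partial\frac{v^2}{y}$ restricted to the relevant directions is immediate. Either route reduces the proof to a one-line verification once the bookkeeping is set up correctly.
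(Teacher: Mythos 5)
Your computation is correct and leads to the right conclusion, but it takes a genuinely different (and heavier) route than the paper. The paper's proof is two lines: restrict the potential $\varphi=(\im z)^2/\im\tau$ to the curve $z=a+b\tau$, where it becomes $b^2\im\tau$, which is harmonic; hence the pullback of $\mu$ to that curve vanishes, and by positive semi-definiteness of $\mu$ the tangent direction is a null vector. (Strictly, that argument only gives containment of the tangent line in the kernel; that the kernel is \emph{exactly} this line uses that $\mu$ has rank one, which follows from its positivity along the fibers.) You instead compute the full Hermitian matrix of $\mu$ and factor it as a rank-one square, namely
$\mu=\dfrac{i}{2\im\tau}\bigl(dz-\beta_2\,d\tau\bigr)\wedge\overline{\bigl(dz-\beta_2\,d\tau\bigr)}$ with $\beta_2=\im z/\im\tau$, so the kernel is the annihilator of $dz-\beta_2\,d\tau$, and $\beta_2=b$ at $z_0=a+b\tau_0$. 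This costs more bookkeeping but buys the exact kernel (including its one-dimensionality) in a single step, and the identity you derive is precisely what the paper establishes anyway in the proof of Proposition~\ref{2-form prop} ($\mu=2i\,\im\tau\,\partial\beta_2\wedge\bar\partial\beta_2$, via Proposition~\ref{beta de}) and reuses as the matrix $g_\mu$ in Section~\ref{counting big section} — so your computation is not wasted effort, it just front-loads it. Your alternative reduction by $\mathbb R^2$-invariance to the curve $z\equiv 0$ is also valid and is closest in spirit to the paper's restriction argument.
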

\begin{proof}
For $z=a+b\tau$, then $\im z=b\,\im\tau$ and hence the K\"ahler potential restricted to the curve $z=a+b\tau$ is $b^2\im\tau$, which is harmonic. Hence, the holomorphic tangent space of the curve $z=a+b\tau$ is a null space for $\mu$.
\end{proof}

Let $U\subset\mathcal M^0_\Gamma$ be a connected open set. If $U$ is sufficiently small, we can find an open set $\mathfrak U\subset\mathcal H\times\mathbb C$ such that $\pi:\mathfrak U\rightarrow U$ is a biholomorphism. Fix such a choice for $\mathfrak U$. Let $p\in U$ and $\pi^{-1}(p)=(\tau, z)\in\mathfrak U$, then there exist unique $\beta_1,\beta_2\in\mathbb R$ such that $z=\beta_1+\beta_2\tau$. When restricting to the image of a local section of $\mathcal M^0_\Gamma$, the pair $(\beta_1,\beta_2)$ is called the \textit{Betti coordinates} in the literature. Here we regard them as local real analytic functions on $U$ and still call them Betti coordinates.

\begin{proposition}\label{beta de} In terms of the local coordinates $(\tau, z)$, we have
$$
\partial\beta_2=\dfrac{dz-\beta_2d\tau}{2i\im\tau}.
$$
\end{proposition}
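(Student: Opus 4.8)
The plan is to first obtain a closed-form expression for $\beta_2$ as a real-analytic function of the local holomorphic coordinates and their conjugates, and then to read off its $(1,0)$-part. Starting from $z = \beta_1 + \beta_2\tau$ and taking complex conjugates, the reality of $\beta_1$ and $\beta_2$ gives the companion relation $\bar z = \beta_1 + \beta_2\bar\tau$. Subtracting the two eliminates $\beta_1$ and yields
$$
\beta_2 = \frac{z - \bar z}{\tau - \bar\tau} = \frac{\im z}{\im\tau},
$$
valid at every point of $U$ (the denominator never vanishes since $\tau \in \mathcal H$), which in particular re-confirms that $\beta_2$ is real-analytic on $U$.

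Next I would differentiate this expression, treating $\tau, z$ as holomorphic coordinates and $\bar\tau, \bar z$ as the conjugate coordinates, so that $d\tau, dz$ are of type $(1,0)$ and $d\bar\tau, d\bar z$ are of type $(0,1)$. The quotient rule gives
$$
d\beta_2 = \frac{(dz - d\bar z)(\tau - \bar\tau) - (z - \bar z)(d\tau - d\bar\tau)}{(\tau - \bar\tau)^2}.
$$
Extracting the $(1,0)$-component amounts to retaining only the terms containing $dz$ and $d\tau$, so that
$$
\partial\beta_2 = \frac{dz\,(\tau - \bar\tau) - (z - \bar z)\,d\tau}{(\tau - \bar\tau)^2}.
$$

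Finally I would substitute the identity $z - \bar z = \beta_2(\tau - \bar\tau)$ from the first step into the numerator, cancel one factor of $\tau - \bar\tau$, and use $\tau - \bar\tau = 2i\,\im\tau$ to arrive at
$$
\partial\beta_2 = \frac{dz - \beta_2\, d\tau}{\tau - \bar\tau} = \frac{dz - \beta_2\, d\tau}{2i\,\im\tau},
$$
as claimed. There is no real obstacle here beyond careful bookkeeping; the only point meriting attention is the consistent separation of the exterior derivative into its $(1,0)$ and $(0,1)$ parts, together with the observation that $\beta_1, \beta_2$ are genuine (single-valued, real-analytic) functions on the chosen small open set $U$, which was already noted in the paragraph preceding the statement.
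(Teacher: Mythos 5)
Your proposal is correct and follows essentially the same route as the paper: both start from $z-\bar z=\beta_2(\tau-\bar\tau)$ (equivalently $\beta_2\,\im\tau=\im z$), differentiate, and isolate the $(1,0)$-part; the paper merely differentiates the product relation directly rather than first solving for $\beta_2$ and applying the quotient rule. The bookkeeping of the $(1,0)$/$(0,1)$ split is handled correctly.
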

\begin{proof}
Since $z=\beta_1+\beta_2\tau$, we have $\beta_2\im\tau=\im z$. By differentiation
$$
\beta_2d\tau+(\tau-\bar\tau)\partial\beta_2=dz
$$
and the result follows.
\end{proof}

Of course, $\beta_1$, $\beta_2$ are only locally defined and more importantly they depend on our choice of $\mathfrak U$ (which is sometime called an \textit{abelian logarithm}), but we have the following:

\begin{proposition}\label{2-form prop}
The 2-form $d\beta_1\wedge d\beta_2$ is independent of the choice of abelian logarithm and globally defined on $\mathcal M^0_\Gamma$. Furthermore, we have $\mu=d\beta_1\wedge d\beta_2$.
\end{proposition}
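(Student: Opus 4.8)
The plan is to split the statement into its two assertions and dispatch them in order: first the invariance of $d\beta_1\wedge d\beta_2$ under a change of abelian logarithm (which yields that it descends to a global $2$-form on $\mathcal M^0_\Gamma$), and then the pointwise identity $\mu=d\beta_1\wedge d\beta_2$, which it suffices to verify upstairs on $\mathcal H\times\mathbb C$ since the covering $\mathcal H\times\mathbb C\to\mathcal M^0_\Gamma$ is a local biholomorphism.

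For the first assertion, I would note that a change of abelian logarithm over a connected $U$ replaces the chosen lift $\mathfrak U$ by $g\cdot\mathfrak U$ for a single deck transformation $g=(\gamma,(\alpha,\beta))\in\Gamma\ltimes\mathbb Z^2$. For the translation part $(\alpha,\beta)\in\mathbb Z^2$, the identity $z+\alpha+\beta\tau=(\beta_1+\alpha)+(\beta_2+\beta)\tau$ shows the Betti coordinates change by the additive constants $\alpha,\beta$, so $d\beta_1,d\beta_2$ are unchanged. For $\gamma=\left(\begin{smallmatrix}a&b\\c&d\end{smallmatrix}\right)\in\Gamma$, writing $\tau'=\frac{a\tau+b}{c\tau+d}$, $z'=\frac{z}{c\tau+d}$ and demanding $z'=\beta_1'+\beta_2'\tau'$ with $\beta_i'\in\mathbb R$ gives $z=\beta_1'(c\tau+d)+\beta_2'(a\tau+b)=(d\beta_1'+b\beta_2')+(c\beta_1'+a\beta_2')\tau$; comparing with $z=\beta_1+\beta_2\tau$ and using uniqueness of the representation of $z$ as a real-affine combination of $1$ and $\tau$ (valid since $\tau\notin\mathbb R$) yields $\left(\begin{smallmatrix}\beta_1\\\beta_2\end{smallmatrix}\right)=\left(\begin{smallmatrix}d&b\\c&a\end{smallmatrix}\right)\left(\begin{smallmatrix}\beta_1'\\\beta_2'\end{smallmatrix}\right)$, a constant linear substitution of determinant $ad-bc=1$. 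Hence $d\beta_1'\wedge d\beta_2'=d\beta_1\wedge d\beta_2$, so $d\beta_1\wedge d\beta_2$ is independent of the abelian logarithm and the local definitions patch to a globally defined $2$-form on $\mathcal M^0_\Gamma$.

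For the identity $\mu=d\beta_1\wedge d\beta_2$ I would compute on $\mathcal H\times\mathbb C$. Since $\beta_1$ is real, differentiating $z=\beta_1+\beta_2\tau$ gives $d\beta_1=dz-\beta_2\,d\tau-\tau\,d\beta_2$, hence $d\beta_1\wedge d\beta_2=(dz-\beta_2\,d\tau)\wedge d\beta_2$; by Proposition~\ref{beta de} the form $dz-\beta_2\,d\tau=2i\,\im\tau\,\partial\beta_2$ is of type $(1,0)$, so wedging with $d\beta_2=\partial\beta_2+\bar\partial\beta_2$ retains only the $(0,1)$-part, giving $d\beta_1\wedge d\beta_2=2i\,\im\tau\,\partial\beta_2\wedge\bar\partial\beta_2$. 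On the other hand, substituting $\im z=\frac{z-\bar z}{2i}$, $\im\tau=\frac{\tau-\bar\tau}{2i}$ and $\beta_2=\frac{z-\bar z}{\tau-\bar\tau}$ into the potential $\frac{(\im z)^2}{\im\tau}=-\frac{i(z-\bar z)^2}{2(\tau-\bar\tau)}$, a short direct computation of $i\partial\bar\partial$ yields $\mu=\frac{1}{\tau-\bar\tau}(d\bar z-\beta_2\,d\bar\tau)\wedge(dz-\beta_2\,d\tau)$; recognizing $dz-\beta_2\,d\tau=2i\,\im\tau\,\partial\beta_2$ and its conjugate $d\bar z-\beta_2\,d\bar\tau=-2i\,\im\tau\,\bar\partial\beta_2$, this is again $2i\,\im\tau\,\partial\beta_2\wedge\bar\partial\beta_2$, matching the previous line. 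Alternatively one can bypass computing $\mu$ explicitly: Proposition~\ref{kernel} shows $\ker\mu$ at $(\tau_0,z_0)$, $z_0=a+b\tau_0$, is the holomorphic tangent line of $\{z=a+b\tau\}$, and the same holds for $\ker(d\beta_1\wedge d\beta_2)$ since $\beta_1\equiv a$, $\beta_2\equiv b$ are constant along that curve; two semi-positive $(1,1)$-forms on a surface with the same one-dimensional kernel agree up to a positive function, which a single scalar evaluation pins to $1$.

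I do not expect a genuinely hard step here — the substance is bookkeeping — but the points needing care are: (i) the uniqueness of the real-affine representation $z=\beta_1+\beta_2\tau$ (resting on $\im\tau>0$), which underpins both the transformation law and the clean form of $d\beta_1$; (ii) the factors of $2i$ and the identity $i\partial\bar\partial=-i\bar\partial\partial$ must be tracked consistently through the computation of $\mu$; and (iii) for the global statement one must observe that any two abelian logarithms over a connected $U$ differ by a \emph{single} element of $\Gamma\ltimes\mathbb Z^2$, so that the invariance proven above is exactly what is needed for patching.
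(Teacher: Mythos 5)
Your proof is correct, and for the main identity $\mu=d\beta_1\wedge d\beta_2$ it follows essentially the same computation as the paper: both sides are reduced to $2i\,\im\tau\,\partial\beta_2\wedge\bar\partial\beta_2$ using Proposition~\ref{beta de} and the proportionality of $\partial\beta_1$ (resp.\ $d\beta_1+\tau d\beta_2$) to $\partial\beta_2$. Where you genuinely diverge is the first assertion: the paper gets global well-definedness for free, as a corollary of the equality with $\mu$, which descends because it is $\mathrm{SL}(2,\mathbb R)\ltimes\mathbb R^2$-invariant; you instead prove invariance directly from the transformation law $\bigl(\begin{smallmatrix}\beta_1\\\beta_2\end{smallmatrix}\bigr)=\bigl(\begin{smallmatrix}d&b\\c&a\end{smallmatrix}\bigr)\bigl(\begin{smallmatrix}\beta_1'\\\beta_2'\end{smallmatrix}\bigr)$ plus integer translations, with the determinant-one observation doing the work. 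Your route is more self-contained (it shows the invariance of $d\beta_1\wedge d\beta_2$ without reference to $\mu$, and in fact for the full real group action), at the cost of a page of bookkeeping the paper avoids. Two small points of care: when you drop the $(2,0)$-part in $(dz-\beta_2\,d\tau)\wedge d\beta_2$, the justification on the two-dimensional manifold $\mathcal H\times\mathbb C$ is not bidegree alone (a wedge of two $(1,0)$-forms need not vanish there) but the fact that $dz-\beta_2\,d\tau$ is a scalar multiple of $\partial\beta_2$, so the offending term is $\partial\beta_2\wedge\partial\beta_2=0$ --- which is exactly the cancellation the paper also uses. And in your alternative kernel argument, matching the two forms pointwise only shows they agree up to a positive \emph{function}; to pin it down by a single evaluation you would additionally need the transitivity of the $\mathrm{SL}(2,\mathbb R)\ltimes\mathbb R^2$-action (under which both forms are invariant), so as stated that sketch is incomplete, though your primary computational argument does not rely on it.
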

\begin{proof} 
Since $\mu$ is globally defined on $\mathcal M^0_\Gamma$, we just need to show that locally we have $\mu=d\beta_1\wedge d\beta_2$ for any choices of Betti coordinates. As before, take a sufficiently small open set $U\subset\mathcal M^0_\Gamma$ and choose $\mathfrak U\subset\mathcal H\times\mathbb C$ such that $\pi:\mathfrak U\rightarrow U$ is biholomorphic.

By definition, for $(\tau, z)\in\mathfrak U$, we have $z=\beta_1+\beta_2\tau$, hence $\beta_2=\im z/\im\tau$. On the one hand,
\begin{eqnarray*}
\mu&=&i\partial\bar\partial\dfrac{(\im z)^2}{\im\tau}\\
%&=&\dfrac{1}{2}\partial\bar\partial((\tau-\bar\tau)\beta_2^2)\\
&=&i\partial\left(\dfrac{2\im z}{\im\tau}\bar\partial \im z - \dfrac{(\im z)^2}{(\im\tau)^2}\bar\partial \im\tau\right)\\
%&=&2i\partial\left(\dfrac{\im z}{\im\tau}\right)\wedge\bar\partial \im z-\dfrac{2\im z}{\im\tau}\partial\left(\dfrac{\im z}{\im\tau}\right)\wedge\bar\partial \im\tau\\
%\\
&=&2i\partial\left(\dfrac{\im z}{\im\tau}\right)\wedge\left(\bar\partial \im z-\dfrac{\im z}{\im\tau}\bar\partial \im\tau\right)\\
&=&2i\im\tau\partial\left(\dfrac{\im z}{\im\tau}\right)\wedge\bar\partial\left(\dfrac{\im z}{\im\tau}\right)\\
&=&2i\textrm{Im}(\tau)\partial\beta_2\wedge\bar\partial\beta_2.
\end{eqnarray*}
On the other hand, as $z=\beta_1+\beta_2\tau$ $\Rightarrow$ $\bar\partial\beta_1+\tau\bar\partial\beta_2=0$, we have
%$$
%\begin{array}{rcl}
%d\beta_1\wedge d\beta_2&=&(\partial\beta_1+\bar\partial\beta_1)\wedge(\partial\beta_2+\bar\partial\beta_2)\\
%\\
%&=&\partial\beta_1\wedge\bar\partial\beta_2+\bar\partial\beta_1\wedge\partial\beta_2\\
%\\
%&=&-\bar\tau\partial\beta_2\wedge\bar\partial\beta_2-%\tau\bar\partial\beta_2\wedge\partial\beta_2\\
%\\
%&=&2i\textrm{Im}(\tau)\partial\beta_2\wedge\bar\partial\beta_2.
%\end{array}
%$$
\begin{eqnarray*}
d\beta_1\wedge d\beta_2&=&(\partial\beta_1+\bar\partial\beta_1)\wedge(\partial\beta_2+\bar\partial\beta_2)\\
&=&\partial\beta_1\wedge\bar\partial\beta_2+\bar\partial\beta_1\wedge\partial\beta_2\\
&=&-\bar\tau\partial\beta_2\wedge\bar\partial\beta_2-\tau\bar\partial\beta_2\wedge\partial\beta_2\\
&=&2i\textrm{Im}(\tau)\partial\beta_2\wedge\bar\partial\beta_2.
\end{eqnarray*}
Therefore,  $\mu=d\beta_1\wedge d\beta_2$ .
\end{proof}

\subsection{Canonical height and the invariant form}\label{canonical height}

Consider the elliptic curve defined by $y^2=x(x-1)(x-\lambda)$ (in an affine chart) over the function field $\mathbb C(\lambda)$ of $\mathbb P^1$. Let $\pi_L:\mathcal E^0_L\rightarrow\mathbb P^1\setminus\{0,1,\infty\}$ be the associated Legendre scheme.
In~\cite{CDMZ}, the following relation between the Betti map and the canonical height for the sections of an elliptic surface given by pulling back by a branched cover $B\rightarrow\mathbb P^1$  has been proven:

\begin{theorem}[\cite{CDMZ}, Theorem 3.2 and Corollary 3.4 therein]\label{height thm}
Let $r:B\rightarrow\mathbb P^1$ be a finite morphism and $B^0:=B\setminus r^{-1}(\{0,1,\infty\})$. Let $\mathcal E^0_{B^0}:=\mathcal E^0_L\times_{(\pi_L,r)} B$ and $\pi_{B^0}:\mathcal E^0_{B^0}\rightarrow B^0$ be the projection. Then, for an algebraic section $\sigma:B^0\rightarrow\mathcal E^0_{B^0}$,
$$
\hat h(\sigma)=\int_{B^0}\sigma^*(d\beta_1\wedge d\beta_2),
$$
where $\hat h$ is the canonical height of $\mathcal E_B$. In particular, the integral above is a rational value.
\end{theorem}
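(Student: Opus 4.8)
The proof I would give follows the two-step strategy announced for the differential-geometric argument: (i) $\mu = d\beta_1\wedge d\beta_2$ on $\mathcal E^0$, which is exactly Proposition~\ref{2-form prop} and hence already available; and (ii) $\int_{B\setminus S}\sigma^*\mu = \hat h(\sigma)$, together with the rationality of this number. I concentrate on (ii).

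First I would exploit that $\mu$ admits on $\mathcal E^0$ the local real-analytic potential $\lambda := \frac{(\im z)^2}{\im\tau}$. Since $\lambda$ is $\mathrm{SL}(2,\mathbb R)$-invariant and changes only by an additive pluriharmonic term under the lattice translations $(\tau,z)\mapsto(\tau,z+\alpha+\beta\tau)$, the composite $\lambda_\sigma := \frac{(\im z_\sigma)^2}{\im\tau}$ is a well-defined real-analytic function on $B\setminus S$ up to addition of pluriharmonic functions — equivalently, up to the monodromy ambiguity of the Betti coordinates — and $\sigma^*\mu = i\partial\bar\partial\lambda_\sigma$ there. Restricted to a regular fibre $E_b = \mathbb C/(\mathbb Z+\tau\mathbb Z)$ the form $\mu$ is the translation-invariant form of total mass $1$, i.e.\ the Haar probability measure, so $\mu|_{E_b}$ is the canonical fibre measure attached to the zero section $(O)$; moreover $\lambda$ differs from the Néron local height function of $(O)$ only by $-\log|\vartheta(z;\tau)|$ plus a function of $\tau$, a correction that is pluriharmonic off $(O)$ and therefore invisible to $i\partial\bar\partial$ wherever $\sigma$ avoids $(O)$, while at the isolated points of $B\setminus S$ where $\sigma$ meets $(O)$ the function $\lambda_\sigma$ is itself smooth and vanishes. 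The point of this first paragraph is that $\sigma^*\mu$ is genuinely the restriction to $\sigma(B\setminus S)$ of the canonical fibrewise-Haar $(1,1)$-form on $\mathcal E^0$.

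To identify the integral with $\hat h(\sigma)$ I would use the function-field description of the Néron--Tate height. As $(O)$ is $[-1]$-symmetric, $\hat h(\sigma) = \lim_{n\to\infty}\frac1{n^2}\deg_B\big(([n]\sigma)^*(O)\big)$, and $\deg_B(([n]\sigma)^*(O))$ counts, with multiplicity, the $b\in B$ at which $[n]\sigma(b)$ lies on the zero section of the Néron model. I would split this count. Over $B\setminus S$ the condition reads $\sigma(b)\in E_b[n]$, i.e.\ the Betti coordinates $(\beta_1,\beta_2)(b)$ lie in $\tfrac1n\mathbb Z^2/\mathbb Z^2$; so this part is the number of preimages under the (locally defined) Betti map $\bar\beta$ of the $n^2$ torsion points of the real $2$-torus, and — using that $\sigma^*\mu = \bar\beta^*(d\beta_1\wedge d\beta_2)\ge 0$, so that $\bar\beta$ does not fold in the orientation-reversing sense — the equidistribution of those torsion points together with a uniform-in-$n$ bound on the intersection multiplicities at the finitely many critical points of $\bar\beta$ gives that it equals $n^2\int_{B\setminus S}\bar\beta^*(d\beta_1\wedge d\beta_2) + o(n^2) = n^2\int_{B\setminus S}\sigma^*\mu + o(n^2)$ by Proposition~\ref{2-form prop}. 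Over $S$, at each singular fibre the component of the Néron model met by $[n]\sigma$ is periodic in $n$, and whenever $[n]\sigma$ passes through $O$ at such a point its intersection multiplicity with $(O)$ is bounded in $n$ (if $\sigma(b_0)$ has order $\ell$ in the smooth part of the special fibre, then $[n]\sigma$ passes through $O$ only for $\ell\mid n$, and multiplication by $k = n/\ell$, being an isomorphism of the formal group, preserves the order of tangency); hence the $S$-part is $O(1) = o(n^2)$. Dividing by $n^2$ and letting $n\to\infty$ yields $\hat h(\sigma) = \int_{B\setminus S}\sigma^*\mu$. Rationality then comes for free: in the geometric normalization $\hat h(\sigma)$ is a finite sum of Néron local heights — an intersection multiplicity with $(O)$ at the good places, plus a rational Bernoulli-type correction determined by the Kodaira type and the component met by $\sigma$ at the places in $S$ — or, equivalently, one reads it off Shioda's formula $\hat h(\sigma) = 2\chi(\mathcal O_{\mathcal E}) + 2(\sigma\cdot(O)) - \sum_{b\in S}\mathrm{contr}_b(\sigma)$.

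I expect the real obstacle to be the equidistribution step with multiplicities: one must bound, uniformly in $n$, the intersection multiplicities of $[n]\sigma$ with the zero section both at the ramification points of the Betti map inside $B\setminus S$ and at the singular fibres in $S$, and check that these contribute only $o(n^2)$. An alternative that bypasses the limiting process is to invoke the interpretation of the canonical height over a function field as the integral of $\sigma^*$ of the curvature current of the canonically metrized line bundle $\mathcal O_{\mathcal E}((O))$, and then to verify that this current restricts to $\mu$ on $\mathcal E^0$ (Proposition~\ref{2-form prop} together with a comparison of potentials) while carrying the expected mass near $S$; but this requires exactly the same control of $\lambda_\sigma$ near the singular fibres, so the analytic heart of the argument is the same either way.
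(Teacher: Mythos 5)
Your step (i) is indeed just Proposition~\ref{2-form prop}, and your first paragraph (the potential $(\im z)^2/\im\tau$, the Haar measure on fibres, the comparison with the N\'eron local height) matches what the paper does in Theorem~\ref{DM thm}. But for step (ii) you take a genuinely different route from the paper, and your route has an unfilled gap at exactly the point you yourself flag. The paper does \emph{not} count torsion values: it invokes the uniqueness of the canonical height as the quadratic function within bounded distance of $\langle\cdot,O\rangle$, proves quadraticity of $\sigma\mapsto\int_{B\setminus S}\sigma^*\mu$ from the elementary identity $[n]^*\mu=n^2\mu$, and proves boundedness of $\int_{B\setminus S}\sigma^*\mu-\langle\sigma,O\rangle$ by exhibiting $\mu$ (up to a multiple of $\pi^*c_1(T(X^0))$) as $\tfrac18 c_1$ of the Jacobi line bundle carried by $\vartheta_{1,1}^8$ with its invariant metric $(\im\tau)^4e^{-16\pi(\im z)^2/\im\tau}$, whose Mumford--Lear extension to the toroidal compactification has divisor $8O$ plus a fibral divisor; hence the trivial current extension $T_\mu$ differs from $O$ only by a fibral class. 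This is a closed, finite argument with no limiting process and no equidistribution.

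Your alternative via $\hat h(\sigma)=\lim_n n^{-2}\deg_B\bigl(([n]\sigma)^*(O)\bigr)$ is not wrong in conception, but the central step --- that the number of $b\in B\setminus S$ with $\bar\beta(b)\in\frac1n\mathbb Z^2/\mathbb Z^2$, counted with intersection multiplicity, equals $n^2\int_{B\setminus S}\bar\beta^*(d\beta_1\wedge d\beta_2)+o(n^2)$ --- is asserted rather than proved, and it is not a routine application of Weyl equidistribution. The counting function $N(y)=\#\bar\beta^{-1}(y)$ lies in $L^1(\mathbb T^2)$ because the total mass is finite, but it need not be bounded or continuous almost everywhere: near a point of $S$ the coordinate $\beta_2$ tends to a constant while $\beta_1$ oscillates without bound, so the fibres of $\bar\beta$ over points close to the limiting circle $\{\beta_2=\mathrm{const}\}$ can be arbitrarily large, and the lattice $\frac1n\mathbb Z^2/\mathbb Z^2$ places roughly $n$ of its $n^2$ points within distance $1/n$ of that circle. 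Controlling this boundary contribution, together with the uniform-in-$n$ multiplicity bounds at the critical points of $\bar\beta$, is the analytic heart of the theorem and is essentially as hard as the statement itself; your proposal leaves it open. A secondary point: your closing appeal to Shioda's formula $\hat h(\sigma)=2\chi+2(\sigma\cdot(O))-\sum_b\mathrm{contr}_b(\sigma)$ is normalized so that $\lim_n n^{-2}\deg(([n]\sigma)^*(O))$ equals $\tfrac12\hat h(\sigma)$, not $\hat h(\sigma)$; one of your two descriptions of $\hat h$ is off by a factor of $2$, and this needs to be reconciled with the normalization implicit in the integral of $d\beta_1\wedge d\beta_2$.
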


\noindent\textbf{Remark.} In the published version of~\cite{CDMZ}, the integrand in the theorem above is $\sigma^*(d\beta_2\wedge d\beta_1)$, differs by a factor of $-1$. This comes from whether one takes $(1,\tau)$ or $(\tau,1)$ to be the ordered basis for the lattice.

By Proposition~\ref{2-form prop}, we see that the canonical height of $\sigma$ is also equal to the integral of $\mu$ on the image of $\sigma$. We are going to give a direct proof of this for elliptic surfaces given by a classifying maps. The general case can then follow by a reduction procedure similar to that in Section~\ref{reduction section}.

\begin{lemma}
Let $n\in\mathbb N^+$ and define $[n]:\mathcal H\times\mathbb C\rightarrow\mathcal H\times\mathbb C$ by $[n](\tau,z)=(\tau,nz)$. Then $[n]^*\mu=n^2\mu$. 
\end{lemma}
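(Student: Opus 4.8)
The plan is to compute $[n]^*\mu$ directly from the explicit K\"ahler potential, since $\mu = i\partial\bar\partial \frac{(\im z)^2}{\im\tau}$ is given globally on $\mathcal H\times\mathbb C$ and the map $[n]$ is defined there. First I would observe that pullback commutes with $\partial$, $\bar\partial$ and hence with $i\partial\bar\partial$, so that
\[
[n]^*\mu = i\partial\bar\partial\left([n]^*\frac{(\im z)^2}{\im\tau}\right).
\]
Then I would compute the pulled-back potential: since $[n](\tau,z) = (\tau, nz)$, we have $[n]^*(\im z) = \im(nz) = n\,\im z$ (as $n$ is a positive integer, in particular real), while $[n]^*(\im\tau) = \im\tau$. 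Hence
\[
[n]^*\frac{(\im z)^2}{\im\tau} = \frac{(n\,\im z)^2}{\im\tau} = n^2\,\frac{(\im z)^2}{\im\tau}.
\]
Applying $i\partial\bar\partial$ and pulling the constant $n^2$ out gives $[n]^*\mu = n^2\mu$ immediately.

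Since $\mu$ descends to $\mathcal M^0_\Gamma$ (and to $\mathcal E^0$ via a classifying map) and the map $[n]$ on $\mathcal H\times\mathbb C$ is compatible with the multiplication-by-$n$ isogeny on the fibers of $\pi_\Gamma$ — it commutes with the $\mathbb Z^2$-translation part and with the $\mathrm{SL}(2,\mathbb Z)$-action up to the identifications used — the identity $[n]^*\mu = n^2\mu$ also holds for the descended form $\mu$ on $\mathcal M^0_\Gamma$. If one wants the statement purely on $\mathcal H\times\mathbb C$ as written, no descent is needed at all.

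There is essentially no obstacle here: the only mild point to be careful about is that $n$ must be real for $\im(nz) = n\,\im z$, which holds since $n\in\mathbb N^+$; this is exactly why the scaling factor is $n^2$ rather than $|n|^2$ or something involving conjugates. The result is the expected homogeneity of the semi-definite form under fiberwise dilation, and it is the geometric reason why the canonical height — which transforms quadratically under $\sigma\mapsto n\sigma$ — is computed by integrating $\mu$ (equivalently $d\beta_1\wedge d\beta_2$) over the section.
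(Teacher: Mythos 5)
Your proof is correct and follows exactly the route the paper takes: the paper's own proof simply says the lemma "follows directly from the definition $\mu=i\partial\bar\partial\frac{(\im z)^2}{\im\tau}$," and your computation of the pulled-back potential together with the commutation of pullback and $i\partial\bar\partial$ is precisely that direct verification spelled out.
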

\begin{proof}
The lemma follows directly from the definition $\mu=i\partial\bar\partial\dfrac{(\im z)^2}{\im\tau}$.
\end{proof}

\begin{theorem}
Let $\pi:\mathcal E\rightarrow B$ be an elliptic surface obtain given by some classifying map $f:B\rightarrow X_\Gamma$ for a modular curve $X_\Gamma$ and $\sigma:B\rightarrow\mathcal E$ be a section. Then,
$$
\hat h(\sigma)=\int_{B\setminus S}\sigma^*\mu,
$$
where $S:=f^{-1}(X_\Gamma\setminus X^0_\Gamma)$.
\end{theorem}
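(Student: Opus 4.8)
The plan is to reduce the identity to the Legendre case already established in Theorem~\ref{height thm} by passing to a finite branched cover of $B$ over which $\mathcal E$ becomes a pullback of the Legendre scheme, and then to descend using the way the two sides of the formula transform under such a base change. The first ingredient is Proposition~\ref{2-form prop}: since $\mu$ on $\mathcal E^0$ and $d\beta_1\wedge d\beta_2$ on $\mathcal E^0$ are both pulled back from $\mathcal M^0_\Gamma$ through the isomorphism $\mathcal E^0\cong f|_{B^0}^*\mathcal M^0_\Gamma$, we get $\mu=d\beta_1\wedge d\beta_2$ on $\mathcal E^0$, so it is enough to prove $\hat h(\sigma)=\int_{B\setminus S}\sigma^*(d\beta_1\wedge d\beta_2)$.

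First I would build the base change. Composing $f$ with the natural finite morphism $X_\Gamma\to\mathbb P^1$ onto the $j$-line, then pulling back along the degree-$6$ cover $\mathbb P^1_\lambda\to\mathbb P^1_j$ (and normalizing), and finally, if needed, taking one more quadratic cover, produces a smooth projective curve $B'$, a finite morphism $\rho:B'\to B$, and a finite morphism $r':B'\to\mathbb P^1$ such that the base-changed surface $\mathcal E':=\rho^*\mathcal E$ is \emph{isomorphic} over $B'$ to the Legendre pullback $\mathcal E^0_L\times_{\mathbb P^1}B'$; here the extra quadratic cover is there to kill the quadratic twist by which $\mathcal E'$ and the Legendre pullback --- having the same $j$-invariant and a common section --- might otherwise differ. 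Writing $\sigma'$ for the section induced by $\sigma$ and $S'=\rho^{-1}(S)$, Theorem~\ref{height thm} applies to $\sigma'$ and gives $\hat h(\sigma')=\int_{B'\setminus S'}(\sigma')^*(d\beta_1\wedge d\beta_2)$.

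To conclude I would use the scaling of the two quantities under $\rho$. On the arithmetic side the geometric canonical height satisfies $\hat h(\sigma')=\deg(\rho)\,\hat h(\sigma)$ --- this is functoriality of the N\'eron--Tate height under a finite extension of the base function field, equivalently Shioda's intersection formula read on a relatively minimal model. On the geometric side, $d\beta_1\wedge d\beta_2$ on the regular locus of $\mathcal E'$ is the pullback of the one on the regular locus of $\mathcal E$ (both descending from $\mathcal M^0_\Gamma$), so $(\sigma')^*(d\beta_1\wedge d\beta_2)=\rho^*\bigl(\sigma^*(d\beta_1\wedge d\beta_2)\bigr)$ and hence $\int_{B'\setminus S'}(\sigma')^*(d\beta_1\wedge d\beta_2)=\deg(\rho)\int_{B\setminus S}\sigma^*(d\beta_1\wedge d\beta_2)$. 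Dividing by $\deg(\rho)$ gives the claim. If one prefers to connect the level-$k$ family to the Legendre family through an $n$-isogeny rather than a quadratic twist, the preceding lemma $[n]^*\mu=n^2\mu$ together with $\hat h([n]\sigma')=n^2\hat h(\sigma')$ plays the same bookkeeping role.

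The step I expect to be most delicate is not a single computation but the compatibility at the boundary: one has to make sure $B'\setminus S'$ lands exactly in $\mathbb P^1\setminus\{0,1,\infty\}$, that $S'=\rho^{-1}(S)$ is precisely the locus that should be removed, and that quoting Theorem~\ref{height thm} and the base-change formula for $\hat h$ introduces no spurious contribution from cusps or from non-minimal fibres of $\mathcal E'$; arranging in the second step that $\mathcal E'$ is genuinely \emph{isomorphic} to a Legendre pullback --- not merely isogenous or twist-equivalent --- is the other point needing care. A self-contained alternative avoiding the reduction would be to recognize $\mu$ on $\mathcal E^0$ as the curvature of the cubical (biextension) metric on the Poincar\'e bundle restricted to $\mathcal E^0$ and to compute $\int_{B\setminus S}\sigma^*\mu$ directly as the height-pairing self-intersection; but since this is essentially the route of~\cite{CDMZ}, reducing to their Theorem~\ref{height thm} is the economical choice.
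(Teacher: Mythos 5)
Your argument is sound, but it is not the paper's proof --- indeed it is essentially the deduction the authors explicitly set aside. Right after quoting Theorem~\ref{height thm} they observe that, by Proposition~\ref{2-form prop}, the identity $\hat h(\sigma)=\int_{B\setminus S}\sigma^*\mu$ ``also'' follows, and then announce a \emph{direct} proof. That proof runs through the uniqueness characterization of the canonical height (Silverman): they verify (i) quadraticity, $\int_{B\setminus S}(n\sigma)^*\mu=n^2\int_{B\setminus S}\sigma^*\mu$, using $[n]^*\mu=n^2\mu$ and the compatibility of the induced map $f^\sharp:\mathcal E^0\to\mathcal M^0_\Gamma$ with multiplication; and (ii) boundedness of $\int_{B\setminus S}\sigma^*\mu-\langle\sigma,O\rangle$ as $\sigma$ varies, which they obtain from the Jacobi theta function $\vartheta_{1,1}$, the Hermitian metric on the Jacobi line bundle with $c_1(L)=-2c_1(\pi^*T(X^0_{\Gamma(k)}))+8\mu$, and the Mumford--Lear extension whose divisor is $8O+F$ with $F$ fibral, so that $[T_\mu]-O$ has bounded intersection with sections. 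Your alternative --- descend to the Legendre family through the $j$-line, a degree-6 $\lambda$-cover and a quadratic cover killing the twist, apply Theorem~\ref{height thm} upstairs, and divide by $\deg(\rho)$ using $\hat h(\sigma')=\deg(\rho)\,\hat h(\sigma)$ and $\int_{B'}\rho^*\alpha=\deg(\rho)\int_B\alpha$ --- does go through: non-isotriviality forces the generic $j$ away from $0$ and $1728$ so the twist is genuinely quadratic, the Betti form is intrinsic to the family so the two models induce the same $d\beta_1\wedge d\beta_2$ on the common good locus, the mismatch between $S'$ and $(r')^{-1}\{0,1,\infty\}$ is a finite set and irrelevant to the integral, and the canonical height depends only on the generic fibre, not on the surface model. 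What your route buys is brevity; what it costs is independence: it consumes the CDMZ identity as an input, whereas the point of this section of the paper is to reprove that identity by differential-geometric means, with Theorem~\ref{height thm} recovered as output rather than used as a black box.
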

\begin{proof}
By the uniqueness of the canonical height (\cite{Si2}, p.218, p.250), it suffices to verify two things:

(1) $\displaystyle\int_{B\setminus S}\sigma^*\mu - \langle\sigma,O\rangle$ is bounded, where $\langle\cdot,\cdot\rangle$ denotes the intersection pairing on $\mathcal E$ and $O$ is the zero section; and

(2) $\displaystyle\int_{B\setminus S}(n\sigma)^*\mu=n^2\int_{B\setminus S}\sigma^*\mu$.

(Note that with an abuse of notation, we use $\sigma$ to both denote the section as a map and its graph.)

We first verify (2), which is easy. Let $f:B\rightarrow X_\Gamma$ be the classifying map and $f^\sharp:\mathcal E^0\rightarrow\mathcal M^0_\Gamma$ be the associated map such that $\pi_\Gamma\circ f^\sharp=f\circ\pi$. To avoid confusion, we denote the semi-definite \Kahler forms on $\mathcal E$ and $\mathcal M^0_\Gamma$ by $\mu$ and $\mu_\Gamma$ respectively. We then have
$$
\int_{B\setminus S}\sigma^*\mu
=\int_{B\setminus S}(f^\sharp\circ\sigma)^*\mu_\Gamma.
$$

Therefore, for $n\in\mathbb N$, 
\begin{eqnarray*}
&&\int_{B\setminus S}(n\sigma)^*\mu
=\int_{B\setminus S}(f^\sharp\circ(n\sigma))^*\mu_\Gamma
=\int_{B\setminus S}([n]\circ f^\sharp\circ\sigma)^*\mu_\Gamma\\
&=&\int_{B\setminus S}(f^\sharp\circ\sigma)^*[n]^*\mu_\Gamma
=n^2\int_{B\setminus S}(f^\sharp\circ\sigma)^*\mu_\Gamma
=n^2\int_{B\setminus S}\sigma^*\mu.
\end{eqnarray*}
We have thus proven that $\displaystyle\int_{B\setminus S}\sigma^*\mu$ is quadratic.

For $(1)$, we first note that, while originally only defined on the set of regular fibers $\mathcal E^0$, the $(1,1)$-form $\mu$ extends as a positive $(1,1)$-current $T$ on $\mathcal E$, which implies in particular that the integral is finite. The extension was first established in the case of elliptic modular surfaces by~\cite{MT} (p.38 therein), and the general case follows by pulling back by the classifying map.  Alternatively, the extension is a consequence of Theorem~\ref{DM thm} and the result of DeMarco-Mavraki~\cite{DM} mentioned before the theorem. (The extension can also be done on the compactified elliptic modular surfaces using the toroidal compactification, see~\cite{BKK} for example.) Whenever a current extension of $\mu$ on $\mathcal E$ exists (which is in general not unique), there is the \textit{trivial extension}, i.e. the $(1,1)$-current $T_\mu$ such that $\langle [T_\mu],\gamma\rangle=\int_{\gamma\cap\mathcal E^0}\mu$ for every algebraic curve $\gamma\subset\mathcal E$, where $[T_\mu]\in H_2(\mathcal E,\mathbb R)$ is the Poincar\'e dual of $T_\mu$. In particular, $\langle[T_\mu],\sigma\rangle=\int_{B\setminus S}\sigma^*\mu$ for every section $\sigma$.

We here recall some classical facts about the theta function on $\mathcal H\times\mathbb C$ defined by
$$
\vartheta_{1,1}(\tau,z)=\sum_{n\in\mathbb Z}
\exp\left(\pi i\tau\left(n+\dfrac{1}{2}\right)^2
+2\pi i\left(z+\dfrac{1}{2}\right)\left(n+\dfrac{1}{2}\right)\right).
$$
(For the relevant details, the reader may see~\cite{BKK}.) It is known that $\vartheta^8_{1,1}$ transforms under the action of $\mathrm{SL}(2,\mathbb Z)\ltimes\mathbb Z^2$ of $\mathcal H\times\mathbb C$ in such a way that it defines a holomorphic section of some line bundle $L$, called \textit{Jacobi line bundle}, on $\mathcal M^0_{\Gamma(k)}$. Moreover, the function 
$h(\tau,z):=(\im\tau)^4\exp(-16\pi(\im z)^2/\im\tau)$ descends to a Hermitian metric for $L$~(see~\cite{BKK}, Lemma 2.11 therein). Thus, $$c_1(L)=-2c_1(\pi^*T(X^0_{\Gamma(k)}))+8\mu.$$

The divisor of $L$ is precisely $8$ times the zero section on $\mathcal M^0_{\Gamma(k)}$ since the zero set of $\vartheta_{1,1}$ is $\{(\tau,m+n\tau)\in\mathcal H\times\mathbb C:\tau\in\mathcal H \textrm{\,\,and\,\,} m,n\in\mathbb Z\}$ and its zeros are of order 1.
Using the toroidal compactification, $L$ extends as a $\mathbb Q$-line bundle $L^\sharp$ (which is also called a \textit{Mumford-Lear extension}) on $\mathcal M_{\Gamma(k)}$. The divisor for $L^\sharp$ is $8O+F$, where $O$ is the zero section and $F$ is some fibral divisor supported on the singular fibers (see~\cite{BKK}, Proposition 4.9 therein).
Since the divisor of a line bundle is Poincar\'e dual to the first Chern form, by comparing with the expression of $c_1(L)$ above, we thus deduce that $8[T_\mu]-8O$ is a fibral divisor and hence $\langle[T_\mu]-O,\sigma\rangle$ is bounded when $\sigma$ varies. This completes the proof for elliptic modular surfaces. In the case where the elliptic surface $\mathcal E\rightarrow B$ is given by a classifying map $f:B\rightarrow X_\Gamma$, we can simply pullback the extensions $L^\sharp$, $T$ and obtain the desired results.
\end{proof}

The 2-form $d\beta_1\wedge d\beta_2$ is also related to the study of canonical heights with equidistribution theory. For an elliptic surface $\mathcal E\rightarrow B$ defined over $\mathbb Q$, DeMarco-Mavraki~\cite{DM} showed that there is a positive closed (1,1)-current $T$ on $\mathcal E(\mathbb C)$ such that 

(1) the restriction of $T$ to each regular fiber is the normalized Haar measure; and 

(2) for a non-torsion section $\sigma:B\rightarrow\mathcal E$ and a non-repeating sequence $t_n\in B(\overline{\mathbb Q})$ such that $\hat h_{E_{t_n}}(\sigma(t_n))\rightarrow 0$, the discrete measures
$
\displaystyle\dfrac{1}{\#\textrm{Gal}(\overline{\mathbb Q}/\mathbb Q)t_n}
\sum_{t\in\textrm{Gal}(\overline{\mathbb Q}/\mathbb Q)t_n}\delta_t
$ 
converge weakly on $B(\mathbb C)$ to $\sigma^*T$.

In addition, on $\mathcal E^0\setminus O$, where $O$ is the image of the zero section, it was proven in~\cite{DM} that $T$ is up to a constant multiple equal to $dd^cH_N$, where $H_N$ is the N\'eron local Archimedean height function. It was then proven in~\cite{CDMZ} that $T$ is in fact equal to $d\beta_1\wedge d\beta_2$. Using the semi-definite K\"ahler form $\mu$, we can give a simpler proof of this equality.

\begin{theorem}\label{DM thm}
Let $\mathcal E$ be an elliptic surface obtained through some classifying map into a modular curve.
On $\mathcal E^0$, we have $T=\mu$. Consequently, we also have $T=d\beta_1\wedge d\beta_2$ on $\mathcal E^0$.
\end{theorem}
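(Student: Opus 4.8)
The plan is to reduce to the universal case and then pin down $T$ by a scaling argument. Since $\mu$ lives on $\mathcal M^0_\Gamma$ and pulls back to the corresponding semi-definite K\"ahler form on $\mathcal E^0$ under the map $f^\sharp$ induced by the classifying map, and since the DeMarco-Mavraki current $T$ and the N\'eron local height are likewise compatible with $f^\sharp$ (as was already used in the proof that $\hat h(\sigma)=\int_{B\setminus S}\sigma^*\mu$), it suffices to prove $T=\mu$ on an elliptic modular surface $\mathcal M^0_\Gamma$; I would work on the covering $\mathcal H\times\mathbb C\to\mathcal M^0_\Gamma$ for the local computations. Two facts will drive the argument: $\mu$ restricts to the normalized Haar measure on every regular fibre, and both $\mu$ and $T$ scale by $n^2$ under multiplication by $n$.

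First I would check that $\mu|_{E_\tau}$ is the normalized Haar measure on each regular fibre $E_\tau=\mathbb C/(\mathbb Z+\mathbb Z\tau)$: in the abelian logarithm $z=x+iy$, the potential $(\im z)^2/\im\tau$ of $\mu$ restricts to $y^2/\im\tau$, so $\mu|_{E_\tau}=\tfrac{1}{\im\tau}\,i\partial\bar\partial y^2=\tfrac{i}{2\im\tau}\,dz\wedge d\bar z=\tfrac{1}{\im\tau}\,dx\wedge dy$, which has total mass $1$ because the fundamental parallelogram has Euclidean area $\im\tau$. By the defining property of $T$ this equals $T|_{E_\tau}$, so $\Theta:=T-\mu$ is a closed real $(1,1)$-current on $\mathcal M^0_\Gamma$ whose restriction to every regular fibre is $0$.

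The core step would be to show $\Theta=0$ from the equivariances $[n]^*\mu=n^2\mu$ (proved above) and $[n]^*T=n^2T$ (built into the construction of the DeMarco-Mavraki current; alternatively a consequence of $T=c\cdot dd^cH_N$ off the zero section together with the functional equation of $H_N$ under $[n]$), which give $[n]^*\Theta=n^2\Theta$ for all $n\ge1$. Pulling back to $\mathcal H\times\mathbb C$ and writing $\widetilde\Theta=i\big(A\,d\tau\wedge d\bar\tau+B\,d\tau\wedge d\bar z+\bar B\,dz\wedge d\bar\tau+C\,dz\wedge d\bar z\big)$, the fibrewise vanishing forces $C\equiv0$, and then invariance of $\widetilde\Theta$ under the lattice translations $z\mapsto z+m+m'\tau$ makes $A$ and $B$ genuinely lattice-periodic in $z$. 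Applying $[n]:(\tau,z)\mapsto(\tau,nz)$ yields $A(\tau,nz)=n^2A(\tau,z)$ and $B(\tau,nz)=nB(\tau,z)$; combined with the periodicity, $A$ and $B$ become invariant under the dense set of translations $z\mapsto z+\ell/n$ ($\ell$ in the lattice, $n\ge1$), hence independent of $z$, and then $A=n^2A$, $B=nB$ force $A=B=0$. Thus $\Theta=0$ on $\mathcal M^0_\Gamma$, i.e. $T=\mu$, and $T=d\beta_1\wedge d\beta_2$ on $\mathcal E^0$ follows from Proposition~\ref{2-form prop}.

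The main obstacle I anticipate is making the last step rigorous at the level of currents: $T$ is only a closed positive current with measure coefficients, so both the restriction of $\Theta$ to fibres and the implication that a current periodic under a dense subgroup is constant must be justified by disintegrating over the base (or by regularizing $T$). A variant closer to the discussion preceding the theorem is to argue only on $\mathcal M^0_\Gamma\setminus O$: there $T=c\cdot dd^cH_N$, and the N\'eron local canonical height attached to a Weierstrass model differs from $\pi(\im z)^2/\im\tau$ by a locally pluriharmonic function, namely $-\log|\vartheta_{1,1}(\tau,z)|$ plus a function of $\tau$ built from the discriminant $\Delta$ (holomorphic and nowhere vanishing on $X^0_\Gamma$), so that $dd^cH_N$, and hence $T$, is a positive multiple of $\mu$ off $O$, the multiple being forced to $1$ by the fibre restrictions; one then extends across $O$ by a support argument, $T-\mu=c\,[O]$ being killed by restricting to a fibre. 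The delicate point in this variant is precisely to confirm that the $\tau$-dependent part of $H_N$ is pluriharmonic, so that no term proportional to $\pi^*\omega$ survives in $dd^cH_N$.
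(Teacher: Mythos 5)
Your proposal is correct, and your ``variant'' at the end is in fact the paper's own proof: the paper reduces to the modular surface, quotes Silverman's explicit formula $H_N=\pi\,\im\tau\bigl((\im z/\im\tau)^2-\im z/\im\tau+\tfrac16\bigr)+\Phi$ with $\Phi$ pluriharmonic, observes that the non-quadratic terms are pluriharmonic so that $dd^cH_N=i\partial\bar\partial\frac{(\im z)^2}{\im\tau}=\mu$ off the zero section, and pins the constant down to $1$ by the fibre normalization --- exactly the ``delicate point'' you isolate, which is simply the content of the cited formula (\cite{Si2}, p.~468). Your primary route is genuinely different: instead of the explicit height formula it is a rigidity argument, deducing $T-\mu=0$ from fibrewise vanishing, lattice invariance, and the $n^2$-scaling under $[n]$. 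That buys robustness (it uses only structural properties of $T$ and would adapt to more general abelian schemes where no closed-form Néron height is available), at the cost of the current-theoretic care you yourself flag (slicing to restrict $\Theta$ to fibres, and mollifying to conclude that a distribution invariant under a dense group of translations is constant), and of having to source $[n]^*T=n^2T$ --- note that your ``alternative'' justification of that scaling already invokes $T=c\,dd^cH_N$, at which point the short route is available anyway. One concrete slip in the rigidity argument: under $(\tau,z)\mapsto(\tau,z+m+m'\tau)$ one has $dz\mapsto dz+m'\,d\tau$, so invariance of $\widetilde\Theta$ (with $C=0$) gives genuine lattice-periodicity of $B$ but only $A=A\circ g+2m'\,\mathrm{Re}(B\circ g)$ for the $d\tau\wedge d\bar\tau$ coefficient; you must first kill $B$ by the periodicity-plus-scaling argument, after which $A$ becomes genuinely periodic and dies the same way. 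This is a reordering, not a gap.
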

\begin{proof}
By pulling back through the classifying maps it suffices to prove the equality for an elliptic modular surface $\mathcal M$. For a sufficiently small open set $U\subset\mathcal M^0$, we idenify it with a branch of its elliptic logarithm $\mathfrak U\subset\mathcal H\times\mathbb C$.

By~\cite{Si2} (p.468), for $(\tau,z)\in U\cong\mathfrak U$,
$$
	H_N=-\dfrac{1}{2}\left(\left(\dfrac{\im z}{\im\tau}\right)^2
	-\dfrac{\im z}{\im\tau}+\dfrac{1}{6}\right)
	(-2\pi\im\tau)+\Phi,
$$
for certain pluriharmonic function $\Phi$. Since $\im\tau$ and $\im z$ are harmonic, we have
$$
	dd^cH_N=\dfrac{i}{\pi}\partial\bar\partial H_N
	=i\partial\bar\partial\dfrac{(\im z)^2}{\im\tau}=\mu.
$$
Finally, it is immediate to verify that the integral of $\mu$ on each regular fiber is equal to 1 and hence $T=\mu$. By Proposition~\ref{2-form prop}, we then also have $T=d\beta_1\wedge d\beta_2$.
\end{proof}

\section{Verticality of a section of an elliptic surface}\label{verticality section}

\subsection{Decomposition of the tangent bundles on elliptic surfaces}\label{decomposition section}

Let $T:=T(\mathcal M^0_\Gamma)$ be the (holomorphic) tangent bundle of $\mathcal M^0_\Gamma$ and $V\subset T$ be the relative tangent bundle associated to the canonical projection $\pi_\Gamma:\mathcal M^0_\Gamma\rightarrow X^0_\Gamma$. We will also call $V$ the \textit{vertical part} of $T$. From Proposition~\ref{kernel}, we see that the kernel of $\mu$ is always transversal to $V$ and we call it the \textit{horizontal part} of $T$ and denote it by $H$. Thus, $T=V\oplus H$. Note that $H$ is only a real-analytic complex line subbundle.

Consider the projection map $\Pi_V:T\rightarrow V$ associated to the decomposition described above. As before, for a sufficiently small open set in $U\subset\mathcal M^0_\Gamma$, by identifying $U$ with one of its lifting to the universal cover, we will use $(\tau, z)\in\mathcal H\times\mathbb C$ as local coordinates on $U$.
Let $(\tau,z)\in U\subset\mathcal M^0_\Gamma$. Write $z=\beta_1+\beta_2\tau$, where $(\beta_1,\beta_2)$ are the \textit{Betti coordinates}. 
For $v\in T_{(\tau,z)}$, where $T_{(\tau,z)}$ denotes the holomorphic tangent space at $(\tau, z)$, we write $v=v_\tau\dfrac{\partial}{\partial\tau}+v_z\dfrac{\partial}{\partial z}$, for some $v_\tau,v_z\in\mathbb C$. Then, the vertical part $V_{(\tau,z)}\subset T_{(\tau,z)}$ is spanned by $\dfrac{\partial}{\partial z}$ and by Proposition~\ref{kernel} the horizonal part $H_{(\tau,z)}\subset T_{(\tau,z)}$ is spanned by $\dfrac{\partial}{\partial\tau}+\beta_2\dfrac{\partial}{\partial z}$. Then, the decomposition of $v$ with respect to $T=V\oplus H$ is
$$
v=v_\tau\dfrac{\partial}{\partial \tau}+v_z \dfrac{\partial}{\partial z}=
\left((v_z-\beta_2v_\tau)\dfrac{\partial}{\partial z}\right)
+v_\tau\left(\dfrac{\partial}{\partial\tau}+\beta_2\dfrac{\partial}{\partial z}\right).
%=-\dfrac{\im z}{\im\tau}\dfrac{\partial}{\partial z}
%+\left(\dfrac{\partial}{\partial\tau}+\dfrac{\im z}{\im\tau}\dfrac{\partial}{\partial z}\right)
$$
Thus, in terms of the coordinates $(\tau,z,v_\tau,v_z)$ on $T$, the projection $\Pi_V:T\rightarrow V$, as an endomorphism of $T$, is given by
$$
v=(\tau,z,v_\tau,v_z)\mapsto \Pi_V(v)=(\tau,z,0,v_z-\beta_2v_\tau).
$$
Equivalently, as a section on $T^*\otimes V$,
\begin{equation}\label{projection eq}
\Pi_V(v)=(dz-\beta_2d\tau)\otimes\dfrac{\partial}{\partial z}.
\end{equation}
%=2i\im\tau\partial\beta_2\otimes\dfrac{\partial}{\partial z}
Consequently,
\begin{equation}\label{dbar projection eq}
\bar\partial\Pi_V=-\bar\partial\beta_2\otimes d\tau\otimes\dfrac{\partial}{\partial z}.
\end{equation}

For an elliptic surface $\pi:\mathcal E\rightarrow B$ obtained through a classifying map $f: B\rightarrow X_\Gamma$, there is a holomorphic map $f^\sharp:\mathcal E^0\rightarrow\mathcal M^0_\Gamma$ such that $\pi_\Gamma\circ f^\sharp=f\circ\pi$.
We can use $f^\sharp$ to pull back the Betti coordinate functions and the semi-definite K\"ahler form $\mu$ to $\mathcal E^0$. The pullback $(f^\sharp)^*\mu$ remains semi-definite and everywhere non-zero on $\mathcal E^0$ since the restriction of $f^\sharp$ on each regular fiber is an isomorphism. Consequently, on the dense open subset $\mathcal E^0$, there is a similar decomposition for the holomorphic tangent bundle, as described above.

\subsection{Verticality of a section of an elliptic surface}\label{eta section}

In this section, for the simplicity of notation, we will work on the special case of an elliptic modular surface $\pi_\Gamma:\mathcal M_\Gamma\rightarrow X_\Gamma$. If $\pi:\mathcal E\rightarrow B$ is an elliptic surface with a classifying map $f:B\rightarrow X_\Gamma$, then any section $\sigma$ of $\mathcal E$ will canonically give a holomorphic map $\Sigma:B\rightarrow\mathcal M_\Gamma$ such that $\pi_\Gamma\circ\Sigma=f$. It will be evident that by pulling back the relevant objects, the definitions can be carried over to $\mathcal E$ and the related results obtained here for sections of $\mathcal M_\Gamma$ will hold correspondingly for sections of $\mathcal E$.

\begin{definition}\label{verticality def}
Let $\sigma:X_\Gamma\rightarrow\mathcal M_\Gamma$ be a holomorphic section and $d\sigma:TX_\Gamma\rightarrow \sigma^*T(\mathcal M_\Gamma)$ be its differential. Define the \textit{verticality} of $\sigma$ as
$$
\eta_\sigma:=\Pi_V\circ d\sigma|_{T(X^0_\Gamma)}: T(X^0_\Gamma)\rightarrow\sigma^*V.
$$
\end{definition}
Thus, $\eta_\sigma$ is a real-analytic section of the holomorphic line bundle $T^*(X^0_\Gamma)\otimes\sigma^*V$ on $X^0_\Gamma$. A general version of it for families of abelian varieties has been used by Mok~\cite{Mo} and Mok-To~\cite{MT} to prove the finiteness of the Mordell-Weil groups of the Kuga's families of abelian varieties. Geometrically, $\eta_\sigma$ measures how far $\sigma$ deviates from a torsion section. In fact, we have

\begin{proposition}\label{horizontal prop}
$\eta_\sigma\equiv 0$ if and only if $\sigma$ is a torsion section.
\end{proposition}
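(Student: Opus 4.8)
The plan is to exploit the explicit local description of $\eta_\sigma$ coming from Equation~\eqref{projection eq}. Working on a sufficiently small open set $U \subset X^0_\Gamma$ over which we can lift to the universal cover, write the section as $\sigma(\tau) = (\tau, z(\tau))$ with $z(\tau) = \beta_1(\tau) + \beta_2(\tau)\tau$ and $z$ holomorphic in $\tau$. Since $d\sigma$ sends $\partial/\partial\tau$ to $\partial/\partial\tau + z'(\tau)\,\partial/\partial z$, applying $\Pi_V$ as given in~\eqref{projection eq} yields, in these coordinates,
\begin{equation*}
\eta_\sigma = \bigl(z'(\tau) - \beta_2(\tau)\bigr)\, d\tau \otimes \frac{\partial}{\partial z}.
\end{equation*}
Thus $\eta_\sigma \equiv 0$ on $U$ if and only if $z'(\tau) = \beta_2(\tau)$ for all $\tau \in U$.

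Next I would analyze this ODE. Differentiating $z = \beta_1 + \beta_2\tau$ gives $z' = \beta_1' + \beta_2'\tau + \beta_2$, so the condition $z' = \beta_2$ is equivalent to $\beta_1'(\tau) + \beta_2'(\tau)\,\tau = 0$ with $\beta_1, \beta_2$ real-valued real-analytic functions of the complex variable $\tau = s + it$. Taking real and imaginary parts (or, more cleanly, noting that $\beta_1, \beta_2$ are real while $\tau$ is not), the relation $\beta_1' + \tau\beta_2' = 0$ forces both $\beta_2' \equiv 0$ and $\beta_1' \equiv 0$: indeed $\bar\partial\beta_1 + \tau\,\bar\partial\beta_2 = 0$ is precisely the identity recorded in the proof of Proposition~\ref{2-form prop}, and combined with its conjugate $\partial\beta_1 + \bar\tau\,\partial\beta_2$-type relation (equivalently, that the same holds for the holomorphic derivative since $z'$ is holomorphic), the non-real factor $\tau$ forces $d\beta_1 = d\beta_2 = 0$. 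Hence $\beta_1, \beta_2$ are locally constant. Since $X^0_\Gamma$ is connected and the $2$-form $d\beta_1 \wedge d\beta_2$ is globally defined (Proposition~\ref{2-form prop}), and the pair $(\beta_1,\beta_2)$ changes by an element of the affine group $\mathrm{GL}(2,\mathbb{Z}) \ltimes \mathbb{Z}^2$ under change of abelian logarithm while its differential is well-defined, one concludes that $d\beta_1 \equiv 0 \equiv d\beta_2$ globally on $X^0_\Gamma$. Conversely, if $(\beta_1,\beta_2)$ is constant $(a,b)$ with $a, b \in \mathbb{R}$, then locally $z(\tau) = a + b\tau$ and $z' = b = \beta_2$, so $\eta_\sigma \equiv 0$.

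It then remains to identify ``$(\beta_1,\beta_2)$ globally constant in $\mathbb{R}^2$'' with ``$\sigma$ torsion.'' For this I would argue: the multivalued map $\tau \mapsto (\beta_1(\tau), \beta_2(\tau)) \in \mathbb{R}^2/\mathbb{Z}^2$ descending to $X^0_\Gamma$ is (by the construction of the Betti coordinates) exactly the map recording the class of $\sigma(\tau)$ in $E_\tau = \mathbb{C}/(\mathbb{Z} + \mathbb{Z}\tau) \cong \mathbb{R}^2/\mathbb{Z}^2$; being constant means $\sigma$ has constant ``Betti coordinates,'' so $\sigma$ is a torsion section precisely when those constants lie in $\mathbb{Q}^2/\mathbb{Z}^2$. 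To upgrade ``constant in $\mathbb{R}^2/\mathbb{Z}^2$'' to ``constant in $\mathbb{Q}^2/\mathbb{Z}^2$,'' use that $\sigma$ is a \emph{holomorphic} (algebraic) section of the elliptic modular surface: holonomy around the punctures of $X^0_\Gamma$ acts on $\mathbb{R}^2$ through $\Gamma(k) \subset \mathrm{SL}(2,\mathbb{Z})$, and a $\Gamma(k)$-invariant point of $\mathbb{R}^2/\mathbb{Z}^2$ — invariant because the constant must be preserved by monodromy — is necessarily rational (the fixed-point equations $(\gamma - I)v \in \mathbb{Z}^2$ with $\gamma \neq I$ have only rational solutions). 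Hence $\sigma$ is torsion. The main obstacle is this last identification: one must be careful that the Betti coordinates are only defined up to the affine integral monodromy action, so the precise statement is that $d\beta_1 = d\beta_2 = 0$ globally (which is coordinate-free) iff $\sigma$ is torsion, and the rationality must be extracted from monodromy-invariance rather than from connectedness alone — an isotrivial family would otherwise permit a constant irrational section, which cannot occur here since the classifying data genuinely varies (or, in the stated setting, one simply invokes that a constant-Betti section over a curve with $\mathrm{SL}(2,\mathbb{Z})$-type monodromy is forced to be rational).
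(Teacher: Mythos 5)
Your argument is correct, and its second half takes a genuinely different route from the paper. The first half — computing $\eta_\sigma=(z'(\tau)-\beta_2)\,d\tau\otimes\partial/\partial z$ and deducing from $\partial\beta_1+\tau\,\partial\beta_2=0$ together with the conjugate of $\bar\partial\beta_1+\tau\,\bar\partial\beta_2=0$ that $d\beta_1=d\beta_2=0$ — is essentially the paper's own computation (it is Proposition~\ref{eta formula} combined with the reality of the $\beta_i$; the paper's proof of Proposition~\ref{horizontal prop} instead phrases this via the kernel of $\mu$ from Proposition~\ref{kernel}, and is arguably terser than it should be, since $\sigma^*(d\beta_1\wedge d\beta_2)\equiv 0$ alone does not force $d\beta_i=0$ on a one-dimensional base without the tangency-to-the-kernel observation). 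Where you genuinely diverge is the step from ``Betti coordinates locally constant'' to ``torsion'': the paper invokes Manin's theorem of the kernel~\cite{Ma}, whereas you give a direct monodromy argument, observing that the constant class in $\mathbb R^2/\mathbb Z^2$ must be fixed by the monodromy group $\Gamma(k)$ and hence rational. This is a legitimate and more self-contained argument in the present setting (and it survives pullback to an elliptic surface with a classifying map, since the monodromy image is then a finite-index subgroup of $\Gamma(k)$, which still contains powers of the two standard unipotents). One small repair: your parenthetical claim that $(\gamma-I)v\in\mathbb Z^2$ for a \emph{single} $\gamma\neq I$ forces $v$ rational is false when $\gamma$ is unipotent (e.g.\ $\gamma=\begin{pmatrix}1&k\\0&1\end{pmatrix}$ fixes mod $\mathbb Z^2$ every $v=(v_1,v_2)$ with $kv_2\in\mathbb Z$ and $v_1$ arbitrary); what you should say is that invariance under both $\begin{pmatrix}1&k\\0&1\end{pmatrix}$ and $\begin{pmatrix}1&0\\k&1\end{pmatrix}$, which lie in $\Gamma(k)$, forces $v\in\frac{1}{k}\mathbb Z^2$. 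With that fix the proof is complete; the trade-off is that the paper's appeal to Manin's theorem is shorter and applies verbatim to general non-isotrivial abelian schemes, while your argument is elementary and makes the rationality of the torsion class explicit (with denominator dividing $k$).
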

\begin{proof}
Since the horizontal part $H\subset T$ consists of the kernels of $\mu$, we deduce from Proposition~\ref{2-form prop} that $\eta_\sigma\equiv 0$ precisely when $\sigma^*(d\beta_1\wedge d\beta_2)\equiv 0$ and thus the Betti coordinates are locally constant on the image of $\sigma$. 
Then the theorem of Manin~\cite{Ma} implies that $\sigma$ is a torsion section.
\end{proof}

For a holomorphic section $\sigma: X_\Gamma\rightarrow \mathcal M_\Gamma$, the local pullback $\beta:=(\sigma^*\beta_1,\sigma^*\beta_2)$ is called the \textit{Betti map} of $\sigma$. Since the construction of $(\beta_1,\beta_2)$ involves a choice of abelian logarithm on $\mathcal M^0_\Gamma$, so does the Betti map $\beta$, but the vanishing order of $\beta$ at any point $b\in B^0$ is independent of such choice and is intrinsic to the section $\sigma$. 

\begin{definition}[\cite{CDMZ}]
The \textit{multiplicity} of a Betti map $\beta$ at $b$ is defined to be the smallest positive integer $m(b)$ such that the partial derivatives of $\sigma^*\beta_1,\sigma^*\beta_2$ at $b$ vanish up to order $m(b)-1$. We will also call $m(b)$ the Betti multiplicity of $\sigma$ at $b$.
\end{definition}

If $\eta_\sigma$ is not identically equal to zero, then its zeros are precisely the points at which the Betti multiplicity of $\sigma$ is at least 2, which can be seen by the following relationship between $\eta_\sigma$ and the Betti map $(\sigma^*\beta_1,\sigma^*\beta_2)$.

\begin{proposition}\label{eta formula}
Let $\tau$ be a local coordinate near a point $p\in X^0_\Gamma$ coming from $\mathcal H$ and $(\tau, z)$ be local coordinates near $\sigma(p)\in\mathcal M^0_\Gamma$ coming from $\mathcal H\times\mathbb C$. Let $(\beta^\sigma_1,\beta^\sigma_2):=(\sigma^*\beta_1,\sigma^*\beta_2)$, where $\beta_1,\beta_2$ are the Betti coordinates such that $z=\beta_1+\beta_2\tau$. Then, 
$$
\eta_\sigma
=2i\im\tau\partial\beta_2^\sigma\otimes\sigma^*\left(\dfrac{\partial}{\partial z}\right)
=\dfrac{2\im\tau}{i\bar\tau}\partial\beta_1^\sigma\otimes\sigma^*\left(\dfrac{\partial}{\partial z}\right).
$$
\end{proposition}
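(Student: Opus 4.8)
The plan is to compute $\eta_\sigma$ directly by unwinding the definition $\eta_\sigma = \Pi_V \circ d\sigma|_{T(X^0_\Gamma)}$ in the given local coordinates and then to recognize the resulting $(1,0)$-form as a multiple of $\partial\beta_2^\sigma$ using Proposition~\ref{beta de}. First I would write the section in coordinates: near $p$ with local coordinate $\tau$ on $X^0_\Gamma$, the section is $\tau \mapsto (\tau, z(\tau))$ with $z(\tau) = \beta_1^\sigma(\tau) + \beta_2^\sigma(\tau)\tau$, so that $d\sigma\left(\frac{\partial}{\partial\tau}\right) = \frac{\partial}{\partial\tau} + \frac{\partial z}{\partial\tau}\frac{\partial}{\partial z}$ as a section of $\sigma^*T(\mathcal M^0_\Gamma)$; here one should be slightly careful that $z(\tau)$ is only real-analytic in $\tau$, so $\frac{\partial z}{\partial\tau}$ means the $(1,0)$-derivative, i.e. what appears in $dz = \frac{\partial z}{\partial\tau}d\tau + \frac{\partial z}{\partial\bar\tau}d\bar\tau$ when pulled back by $\sigma$.

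Next I would apply the formula~(\ref{projection eq}) for $\Pi_V$, namely $\Pi_V(v) = (dz - \beta_2 d\tau)\otimes\frac{\partial}{\partial z}$ evaluated along $\sigma$. Contracting against $d\sigma\left(\frac{\partial}{\partial\tau}\right)$ gives $\eta_\sigma = \left(\frac{\partial z}{\partial\tau} - \beta_2^\sigma\right)d\tau \otimes \sigma^*\!\left(\frac{\partial}{\partial z}\right)$, where $\frac{\partial z}{\partial\tau}$ is the $(1,0)$-part of the pullback $\sigma^*dz$. The key algebraic identity is then that $\sigma^*\!\left(\partial\beta_2\right) = \frac{\sigma^*(dz) - \beta_2^\sigma\, d\tau}{2i\,\im\tau}$, which is exactly Proposition~\ref{beta de} pulled back by $\sigma$ (note $\tau - \bar\tau = 2i\,\im\tau$). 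Taking the $(1,0)$-component, $\partial\beta_2^\sigma = \frac{1}{2i\,\im\tau}\left(\frac{\partial z}{\partial\tau} - \beta_2^\sigma\right)d\tau$, so $\frac{\partial z}{\partial\tau} - \beta_2^\sigma$ (times $d\tau$) equals $2i\,\im\tau\,\partial\beta_2^\sigma$, yielding $\eta_\sigma = 2i\,\im\tau\,\partial\beta_2^\sigma \otimes \sigma^*\!\left(\frac{\partial}{\partial z}\right)$.

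For the second equality, I would use the relation $z = \beta_1 + \beta_2\tau$, differentiate to get $\partial\beta_1 = \partial\beta_2 \cdot (-\tau)$ — wait, more precisely from $\bar\partial\beta_1 + \tau\bar\partial\beta_2 = 0$ one gets the conjugate relation, but the cleaner route is: from $z = \beta_1 + \beta_2\tau$ applying $\partial$ gives $dz|_{(1,0)} = \partial\beta_1 + \tau\,\partial\beta_2 + \beta_2\,d\tau$, so combined with Proposition~\ref{beta de} we obtain $\partial\beta_1 + \tau\,\partial\beta_2 = 2i\,\im\tau\,\partial\beta_2 + (\text{the }\beta_2 d\tau\text{ already accounted})$; cleaning up gives $\partial\beta_1 = (2i\,\im\tau - \tau)\partial\beta_2 = -\bar\tau\,\partial\beta_2$. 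Hence $\partial\beta_2^\sigma = -\frac{1}{\bar\tau}\partial\beta_1^\sigma$, and substituting into the first expression gives $\eta_\sigma = 2i\,\im\tau\cdot\left(-\frac{1}{\bar\tau}\right)\partial\beta_1^\sigma \otimes \sigma^*\!\left(\frac{\partial}{\partial z}\right) = \frac{2\,\im\tau}{i\bar\tau}\partial\beta_1^\sigma \otimes \sigma^*\!\left(\frac{\partial}{\partial z}\right)$, using $-\frac{2i}{\bar\tau} = \frac{2}{i\bar\tau}$.

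The main obstacle — though it is more of a bookkeeping subtlety than a genuine difficulty — is keeping straight the distinction between the holomorphic differential $dz$ on $\mathcal M^0_\Gamma$ and its pullback $\sigma^*dz$, which acquires a $d\bar\tau$ component because $\sigma$ is only real-analytic; throughout one must consistently extract $(1,0)$-parts, and the identity in Proposition~\ref{beta de} must be read as an identity of $(1,0)$-forms after pullback. Once that is handled carefully, the computation is a direct substitution, and the two displayed formulas for $\eta_\sigma$ follow. I would present this as: unwind the definition using~(\ref{projection eq}), then invoke Proposition~\ref{beta de} pulled back by $\sigma$, then use $z = \beta_1 + \beta_2\tau$ to convert $\partial\beta_2^\sigma$ into $\partial\beta_1^\sigma$.
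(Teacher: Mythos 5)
Your proof is correct and follows essentially the same route as the paper's: unwind the definition via Eq.~(\ref{projection eq}), apply Proposition~\ref{beta de} pulled back by $\sigma$, and use $\partial\beta_1^\sigma=-\bar\tau\,\partial\beta_2^\sigma$ (which the paper derives from the holomorphy of $\sigma$ via $\bar\partial(\beta_1^\sigma+\beta_2^\sigma\tau)=0$) to pass to the second equality. One small slip: since $\sigma$ is a \emph{holomorphic} section, $z(\tau)$ is holomorphic rather than merely real-analytic, so $\sigma^*dz$ has no $d\bar\tau$ component — but your care in extracting $(1,0)$-parts is harmless and the computation is unaffected.
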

\begin{proof}
The first equation follows directly from Proposition~\ref{beta de} and Eq.(\ref{projection eq}) (Section~\ref{decomposition section}). Since $\sigma$ is holomorphic, we have $\bar\partial(\beta_1^\sigma+\beta_2^\sigma\tau)=0$, which gives $\partial\beta_1^\sigma=-\bar\tau\partial\beta_2^\sigma$ and the second equation follows.
\end{proof}

From how the group $\mathrm{SL}(2,\mathbb Z)\ltimes\mathbb Z^2$ acts on $\mathcal H\times\mathbb C$, it is evident that the relative tangent bundle $V$ on $\mathcal M^0_\Gamma=(\mathcal H\times\mathbb C)/(\Gamma\ltimes\mathbb Z^2)$ is just the quotient of $\mathcal H\times\mathbb C\times\mathbb C$ under the following action of $\Gamma\ltimes\mathbb Z^2$:
$$
(\tau,z,v)\mapsto \left(\dfrac{a\tau+b}{c\tau+d}, \,\,\dfrac{z}{c\tau+d}+\alpha+\beta\tau,\,\,
\dfrac{v}{c\tau+d}\right),
$$
where $\begin{pmatrix}a&b\\c&d\end{pmatrix}\in\Gamma$ and $(\alpha,\beta)\in\mathbb Z^2$.
In particular, we see that the transition functions of the holomorphic tangent bundle $T(X^0_\Gamma)$ of $X^0_\Gamma=\mathcal H/\Gamma$ are precisely the squares of those of the pullback bundle $\sigma^*V$. That is, $T(X^0_\Gamma)\cong(\sigma^*V)^2$ as holomorphic line bundles on $X^0_\Gamma$. Let $p\in X_\Gamma^0$, we make the following explicit identification, locally given by $\sigma^*\left(\dfrac{\partial}{\partial z}\otimes \dfrac{\partial}{\partial z}\right)=\dfrac{\partial}{\partial \tau}$, where $\dfrac{\partial}{\partial z}$ and $\dfrac{\partial}{\partial \tau}$ are vector fields near $\sigma(p)$ and $p$ respectively given by local liftings to the universal covers. It is evident that this identification is independent of the liftings and hence global. To simplify the notations, we write $\left(\dfrac{\partial}{\partial \tau}\right)^{\frac{1}{2}}:=\sigma^*\left(\dfrac{\partial}{\partial z}\right)$ and similarly for the dual $\left(d\tau\right)^{\frac{1}{2}}:=\sigma^*dz$.

Let $K=T^*(X^0_\Gamma)$ be the canonical line bundle of $X^0_\Gamma$. In particular, $K\cong(\sigma^*V)^{-2}$ and we will thus write $(\sigma^*V)^{-1}= K^{\frac{1}{2}}$ and $T^*(X^0_\Gamma)\otimes\sigma^*V=K\otimes K^{-\frac{1}{2}}=K^{\frac{1}{2}}$. Therefore, we can now regard $\eta_\sigma$ as a real-analytic section of $K^{\frac{1}{2}}$ on $X_\Gamma^0$.

Consider the metric $g$ on $X^0_\Gamma$ given by the invariant K\"ahler form on $\mathcal H$
$$
\omega=-i\partial\bar\partial\log\im\tau
=\frac{id\tau\wedge d\bar\tau}{4(\im\tau)^2}.
$$
Denote the conjugate bundle of $K$ by $\overline K$. The reciprocal of $g$ is then a Hermitian metric, denoted by $g_\star$, on the canonical line bundle $K$. We will regard $g_\star$ as a section of $K^{-1}\otimes\overline K^{-1}$. Similarly, we have the sections ${g_\star}^{\frac{m}{2}}$ of $K^{-\frac{m}{2}}\otimes\overline K^{-\frac{m}{2}}$, for each $m\in\mathbb Z$.

%which thus gives rise to an isomorphism $\overline K\overset{\cong}{\rightarrow} K^{-1}$ as real analytic complex line bundles. Similarly, the section ${g_\star}^{\frac{1}{2}}$ of $K^{-\frac{1}{2}}\otimes\overline K^{-\frac{1}{2}}$ gives an isomorphism $\overline K^{\frac{1}{2}}\overset{\cong}{\rightarrow} K^{-\frac{1}{2}}$.

As mentioned above, $\eta_\sigma$ can be naturally regarded as a real analytic section of $K^{\frac{1}{2}}$. It follows that $\bar\partial\eta_\sigma$ is a real analytic section of $\overline K\otimes K^{\frac{1}{2}}$. We have

%For simplicity, we use the same symbol $(\cdot)^\star$ to denote the induced isomorphism $\overline K\otimes K^{\frac{1}{2}}\overset{\cong}{\rightarrow} K^{-1}\otimes K^{\frac{1}{2}}=K^{-\frac{1}{2}}$. 
\begin{proposition}\label{dbar eta}
$\bar\partial\eta_\sigma=-i\,\overline{\eta_\sigma}\otimes g_\star^{-\frac{1}{2}}$.
\end{proposition}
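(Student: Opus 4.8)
The plan is to verify the identity pointwise by a local computation. Fix $p\in X^0_\Gamma$ and a coordinate $\tau$ near $p$ coming from $\mathcal H$, and write the section near $\sigma(p)$ as $\tau\mapsto(\tau,z(\tau))$ with $z$ holomorphic. Two elementary facts will do all the work: the Betti function $\beta_2^\sigma=\im z/\im\tau$ is real-valued, and $(d\tau)^{\frac12}=\sigma^*(dz)$ is a local \emph{holomorphic} frame of $K^{\frac12}$, so that $\bar\partial\eta_\sigma$ is obtained by applying $\bar\partial$ to the coefficient of $\eta_\sigma$ in this frame.

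First I would pin down the local shape of $\eta_\sigma$. Pulling back Proposition~\ref{beta de} through $\sigma$, so that $dz$ becomes $z'\,d\tau$, gives $\partial\beta_2^\sigma=\dfrac{z'-\beta_2^\sigma}{2i\,\im\tau}\,d\tau$; hence by Proposition~\ref{eta formula}, $\eta_\sigma=(z'-\beta_2^\sigma)\,(d\tau)^{\frac12}$. Put $\phi:=z'-\beta_2^\sigma$, so that $\eta_\sigma=\phi\,(d\tau)^{\frac12}$ and $\partial_\tau\beta_2^\sigma=\phi/(2i\,\im\tau)$. Since $(d\tau)^{\frac12}$ is holomorphic, $\bar\partial\eta_\sigma=(\partial_{\bar\tau}\phi)\,d\bar\tau\otimes(d\tau)^{\frac12}$. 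Now $z'$ is holomorphic and $\beta_2^\sigma$ is real, so
$$
\partial_{\bar\tau}\phi=-\partial_{\bar\tau}\beta_2^\sigma=-\overline{\partial_\tau\beta_2^\sigma}=-\overline{\phi/(2i\,\im\tau)}=\frac{\bar\phi}{2i\,\im\tau},
$$
and therefore $\bar\partial\eta_\sigma=\dfrac{\bar\phi}{2i\,\im\tau}\,d\bar\tau\otimes(d\tau)^{\frac12}$.

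It remains to recognize the right-hand side in the same frame. From $\omega=\dfrac{i\,d\tau\wedge d\bar\tau}{4(\im\tau)^2}$ the metric $g$ has $|\partial/\partial\tau|^2=1/4(\im\tau)^2$, so the reciprocal metric $g_\star$ on $K$ has $|d\tau|^2=4(\im\tau)^2$; viewed as a section of $K\otimes\overline K$ one then has $g_\star^{-1}=\dfrac{1}{4(\im\tau)^2}\,d\tau\otimes d\bar\tau$, and hence $g_\star^{-\frac12}=\dfrac{1}{2\,\im\tau}\,(d\tau)^{\frac12}\otimes\overline{(d\tau)^{\frac12}}$. Using $\overline{\eta_\sigma}=\bar\phi\,\overline{(d\tau)^{\frac12}}$ together with $\overline{(d\tau)^{\frac12}}\otimes\overline{(d\tau)^{\frac12}}=d\bar\tau$, one obtains
$$
-i\,\overline{\eta_\sigma}\otimes g_\star^{-\frac12}=\frac{-i\,\bar\phi}{2\,\im\tau}\,(d\tau)^{\frac12}\otimes d\bar\tau=\frac{\bar\phi}{2i\,\im\tau}\,d\bar\tau\otimes(d\tau)^{\frac12},
$$
which is exactly $\bar\partial\eta_\sigma$ as computed above. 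The argument is entirely computational; the only point needing attention is the bookkeeping of the half-power line bundles $K^{\pm\frac12}$ and of the normalization of $g_\star$ (viewed, as in the text, as a section of $K^{-1}\otimes\overline K^{-1}$), not any genuine analytic difficulty. I also note that one may argue directly from Eq.~(\ref{dbar projection eq}): since $d\sigma$, $d\tau$ and $\sigma^*(\partial/\partial z)$ are holomorphic, $\bar\partial\eta_\sigma=-\,\bar\partial\beta_2^\sigma\otimes d\tau\otimes\sigma^*(\partial/\partial z)$, after which one concludes exactly as above from the reality of $\beta_2^\sigma$.
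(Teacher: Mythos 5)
Your proof is correct and follows essentially the same route as the paper: the paper's own argument pulls back Eq.~(\ref{dbar projection eq}) by $\sigma$ to get $\bar\partial\eta_\sigma=-\bar\partial\beta_2^\sigma\otimes(d\tau)^{\frac12}$ and then invokes the reality of $\beta_2^\sigma$ together with $g_\star^{-\frac12}=\frac{(d\tau)^{\frac12}\otimes(d\bar\tau)^{\frac12}}{2\,\im\tau}$, which is exactly the alternative you note in your final sentence, and your main computation with $\phi=z'-\beta_2^\sigma$ is just an explicit coefficient version of the same calculation. All normalizations check out.
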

\begin{proof}
Using the notations above and in terms of the standard local coordinate $\tau$, by Proposition~\ref{eta formula}, we have 
$\eta_\sigma=2i\im\tau\partial\beta^\sigma_2\otimes\left(\dfrac{\partial}{\partial\tau}\right)^{\frac{1}{2}}$. On the other hand, using $g_\star^{-\frac{1}{2}}=\dfrac{\left(d\tau\right)^{\frac{1}{2}}\otimes\left(d\bar\tau\right)^{\frac{1}{2}}}{2\im\tau}$ and pulling back Eq.(\ref{dbar projection eq}) of Section~\ref{decomposition section} by $\sigma$, 
\begin{eqnarray*}
\bar\partial\eta_\sigma&=&\bar\partial(\Pi_V\circ d\sigma)=(\bar\partial\Pi_V)\circ d\sigma=-\bar\partial\beta^\sigma_2\otimes d\tau\otimes\left(\dfrac{\partial}{\partial\tau}\right)^{\frac{1}{2}}\\
&=&-\bar\partial\beta^\sigma_2\otimes (d\tau)^{\frac{1}{2}}=-i\,\overline{\eta_\sigma}\otimes g_\star^{-\frac{1}{2}}.
\end{eqnarray*}
\end{proof}

The first order PDE above satisfied by $\eta_\sigma$ has already been established by Mok-To~\cite{MT}. Although the PDE stated in~\cite{MT} was formulated and stated in a slightly different form, one can readily check that it is equivalent to the one here. This is a first order real-linear PDE of $\eta_\sigma$ and was used to show that $\eta_\sigma$ satisfies an eigenequation of the Laplace operator, which is the crux of Mok-To's proof of the finiteness of the Mordell-Weil group for $\mathcal M_\Gamma$. It turns out that this first order PDE is also important for the study of Betti multiplicity and we will make use of it to prove that the zero set of a non-trivial $\eta_\sigma$ is discrete and derive an integral formula which counts the total Betti multiplicity of $\sigma$.

We know by Proposition~\ref{horizontal prop} that $\eta_\sigma\equiv 0$ implies that $\sigma$ is a torsion section. Given a non-torsion $\sigma$, then we are primarily interested in the zeros of $\eta_\sigma$. From how $\eta_\sigma$ is constructed from the section $\sigma:X_\Gamma\rightarrow\mathcal M_\Gamma$, we see that $\eta_\sigma$ is zero precisely at the points at which $\sigma$ is tangent to a local horizontal section, i.e. a local section given $\tau\mapsto (\tau, a+b\tau)$, where $a,b\in\mathbb R$. 

\begin{proposition}\label{isolation}
The zero set of a non-trivial $\eta_\sigma$ is a discrete set in $X^0_\Gamma$.
\end{proposition}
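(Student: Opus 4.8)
The plan is to exploit the first-order PDE $\bar\partial\eta_\sigma = -i\,\overline{\eta_\sigma}\otimes g_\star^{-\frac{1}{2}}$ from Proposition~\ref{dbar eta} to show that $\eta_\sigma$ behaves, near any of its zeros, like a holomorphic section of $K^{\frac{1}{2}}$ with an isolated zero of finite order, so that the zero set is discrete. First I would work locally: fix $p\in X^0_\Gamma$ with $\eta_\sigma(p)=0$, choose the standard coordinate $\tau$ near $p$ coming from $\mathcal H$, and trivialize $K^{\frac{1}{2}}$ by $(d\tau)^{\frac{1}{2}}$, writing $\eta_\sigma = u\,(d\tau)^{\frac12}$ for a real-analytic (indeed smooth) complex-valued function $u$. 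Using $g_\star^{-\frac12} = \frac{(d\tau)^{\frac12}\otimes(d\bar\tau)^{\frac12}}{2\,\im\tau}$ and $\bar\partial\eta_\sigma = \frac{\partial u}{\partial\bar\tau}\,d\bar\tau\otimes(d\tau)^{\frac12}$, the PDE becomes the scalar equation
\begin{equation}\label{scalar pde}
\frac{\partial u}{\partial\bar\tau} = -\frac{i}{2\,\im\tau}\,\bar u .
\end{equation}
This is a first-order elliptic system of the form $\bar\partial u = a(\tau)\,\bar u$ with a smooth (real-analytic) coefficient $a(\tau) = -\frac{i}{2\im\tau}$, which is exactly the setting to which the standard similarity-principle / Bers–Vekua theory applies.

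The key step is to invoke the similarity principle: on a small disk $D$ around $p$ there is a Hölder-continuous function $s$ on $D$ (obtainable by solving a $\bar\partial$-problem with right-hand side $a\bar u/u$, suitably interpreted where $u$ vanishes, via the Cauchy transform) such that $u = e^{s}\,\phi$ with $\phi$ holomorphic on $D$. Since $\eta_\sigma\not\equiv 0$, $\phi\not\equiv 0$, hence $\phi$ has isolated zeros of finite order; because $e^{s}$ is nowhere zero, the zeros of $u$ coincide with those of $\phi$ and are therefore isolated. This gives discreteness of $Z(\eta_\sigma)$ in $X^0_\Gamma$. (An essentially equivalent route, which I would mention as an alternative, is to apply $\partial$ to \eqref{scalar pde} and observe that $\xi := \nabla\eta_\sigma = \partial\eta_\sigma - (\text{Christoffel term})$ is \emph{holomorphic} away from the zero set — this is the identity $\bar\partial\nabla\eta_\sigma = 0$ alluded to in the introduction — and then a Carleman-type unique continuation argument forces finite vanishing order; but the similarity principle is the cleanest.)

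The main obstacle is the degeneracy of the coefficient $a(\tau) = -\frac{i}{2\im\tau}$, which is perfectly smooth on $\mathcal H$ but whose global control requires that $p$ lie in the interior $X^0_\Gamma$ (the statement is only claimed there), so locally near any such $p$ the coefficient is bounded and we are safe; one just has to be careful that the construction of $s$ only needs $a\in L^p_{loc}$ for some $p>2$, which holds trivially here. The other point requiring a little care is the passage from the \emph{real-linear} equation $\bar\partial u = a\bar u$ (note the conjugate on $u$) to the similarity principle: this is handled exactly as in the classical Bers–Vekua treatment of pseudo-analytic functions, by writing $a\bar u = (a\bar u/u)\,u$ on the open set where $u\neq 0$ and checking that the resulting coefficient, while only measurable, lies in the requisite $L^p$ space because $|a\bar u/u|=|a|$ is bounded. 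Once that is in place the conclusion is immediate. I would also record, for use in the sequel, the quantitative consequence that $u$ vanishes to a well-defined finite order $r_b\ge 1$ at each zero $b$, equal to the vanishing order of the associated holomorphic $\phi$, since this integer is what enters the Betti-multiplicity count in Theorem~\ref{integral formula}.
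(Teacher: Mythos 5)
Your argument is correct, but it follows a genuinely different route from the paper's. The paper exploits the fact that $\eta_\sigma$ is \emph{real-analytic}: after trivializing the bundles it splits the power series as $\eta_\sigma=f(w)+\overline{h(w)}+\psi(w,\overline w)$ with $f,h$ holomorphic and $\psi$ containing only mixed terms, substitutes this into $\partial\eta_\sigma/\partial\overline w=G\,\overline{\eta_\sigma}$, and compares the lowest pure powers of $w$ and of $\overline w$ on the two sides; this forces the vanishing orders of $f$, $h$, $\psi$ to increase by one at each step and yields the normal form $\eta_\sigma=w^{\ell}\eta^{\sharp}$ with $\eta^{\sharp}$ continuous and non-vanishing, whence the zero is isolated. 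Your similarity-principle (Bers--Vekua) argument reaches the same dichotomy --- either $u\equiv 0$ on the disk or $u$ has an isolated zero of finite order --- using only that $\bar\partial u=a\bar u$ with $a\in L^{p}_{\mathrm{loc}}$, $p>2$; it is therefore more robust (it would apply to merely $W^{1,p}$ solutions and generalizes to higher-dimensional bases and pseudo-analytic settings), at the cost of importing the Cauchy-transform machinery and the small bookkeeping you correctly flag at the zero set of $u$ (one also needs $\bar\partial u=0$ a.e.\ on $\{u=0\}$ so that $\phi=e^{-s}u$ is holomorphic across that set, and a connectedness argument to pass from the local dichotomy to ``$\eta_\sigma\not\equiv 0$ globally implies $\not\equiv 0$ on each disk''; real-analyticity of $\eta_\sigma$ makes the latter immediate). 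One difference worth noting: since $s$ is only H\"older continuous, your factorization $u=e^{s}\phi$ gives the correct finite vanishing order but a slightly weaker local structure than the paper's $\eta_\sigma=w^{\ell}\eta^{\sharp}$; for the present proposition this is immaterial, though the finer expansion from the paper's proof is what gets reused in Eq.~(\ref{loop eq}) and Lemma~\ref{loop integral}.
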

\begin{proof}
Suppose $w=0$ is a zero of $\eta_\sigma$ with respect to a local coordinate $w$ on $X^0_\Gamma$.  By choosing local holomorphic bases for the bundles $K^{\frac{1}{2}}$ and $\overline K\otimes K^{\frac{1}{2}}$, we regard $\eta_\sigma$ as a complex-valued real-analytic function near $w=0$ and identify $\bar\partial\eta_\sigma$ with $\partial\eta_\sigma/\partial\overline w$. 
By splitting the power series expansion of $\eta_\sigma$, we write
$$
\eta_\sigma(w)=f(w)+\overline{h(w)}+\psi(w,\overline w),
$$ 
where $f$, $h$ are holomorphic and $\psi$ is real-analytic in a neighborhood of $w=0$, such that $\psi$ does not contain any pure terms $w^k$ nor $\overline w^k$, $k\in\mathbb N^+$. That is, $f(w)$ (resp. $\overline{h(w)}$) only contains the powers of $w$ (resp. $\overline w$), and $\psi(w,\overline w)$ contains mixed terms only. Now by Proposition~\ref{dbar eta}, there is a non-vanishing local real-analytic function $G(w,\overline w)$ such that $\partial\eta_\sigma/\partial\overline w=G\,\overline{\eta_\sigma}$, thus
$$
\dfrac{\partial\overline h}{\partial\overline w}+\dfrac{\partial\psi}{\partial\overline w}=G(\overline f+h+\overline\psi).
$$
By comparing the terms of the lowest power in $w$ that do not contain powers of $\overline w$, we deduce that the vanishing order of $\psi$ is one greater than that of $h$. (In the case where $h\equiv 0$, we then have $\psi=f\equiv 0$.) Similarly, by comparing the terms of the lowest power in $\overline w$ that do not contain powers of $w$, we get that the vanishing order of $h$ is one greater than that of $f$. Therefore, if $\ell\geq 1$ is the vanishing order of $f$, then we can write $\eta_\sigma=w^\ell\eta^\sharp$ for some non-vanishing local continuous function $\eta^\sharp$. Hence, if $\eta_\sigma$ is not identically zero, then $w=0$ is an isolated zero for $\eta_\sigma$.
\end{proof}

Let $\nabla$ be the $(1,0)$-part of the Hermitian connection associated to ${g_\star}^{\frac{1}{2}}$ on $K^{\frac{1}{2}}$. Then $\nabla\eta_\sigma$ is a section of $K^{\frac{3}{2}}$.

\begin{proposition}\label{d eta}
$\nabla\eta_\sigma$ is holomorphic.
\end{proposition}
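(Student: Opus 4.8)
The plan is to show $\bar\partial(\nabla\eta_\sigma)=0$ by directly computing $\bar\partial\nabla\eta_\sigma$ using the first-order PDE of Proposition~\ref{dbar eta}. The key point is that $\nabla\eta_\sigma$ is an $a$ priori $(1,0)$-form valued section of $K^{\frac{1}{2}}$, i.e. a section of $K^{\frac{3}{2}}$; applying $\bar\partial$ lands in $\overline K\otimes K^{\frac{3}{2}}$. I would write everything in the standard local coordinate $\tau$ coming from $\mathcal H$, where $g = \frac{1}{4(\im\tau)^2}$ and the Hermitian metric ${g_\star}^{\frac{1}{2}}$ on $K^{\frac{1}{2}}$ has local weight $(2\im\tau)^{-1}$ (up to the choice of holomorphic frame $(d\tau)^{\frac{1}{2}}$). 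The $(1,0)$-Hermitian connection is then $\nabla = \partial + \partial\log(\text{weight})$, so acting on the coefficient function $u$ of $\eta_\sigma = u\,(d\tau)^{\frac12}$ one gets $\nabla\eta_\sigma = \left(\partial_\tau u - \frac{u}{\tau-\bar\tau}\right)d\tau\otimes (d\tau)^{\frac12}$ (computing $\partial_\tau\log(2\im\tau)^{-1} = -\frac{1}{2i\im\tau}= -\frac{1}{\tau-\bar\tau}$).

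Next I would apply $\bar\partial = d\bar\tau\,\partial_{\bar\tau}$ to the coefficient $\partial_\tau u - \frac{u}{\tau-\bar\tau}$. The first term gives $\partial_{\bar\tau}\partial_\tau u = \partial_\tau(\partial_{\bar\tau}u)$; the second gives $-\frac{\partial_{\bar\tau}u}{\tau-\bar\tau} - \frac{u}{(\tau-\bar\tau)^2}$ (using $\partial_{\bar\tau}\frac{1}{\tau-\bar\tau} = \frac{1}{(\tau-\bar\tau)^2}$). Now I substitute the PDE: Proposition~\ref{dbar eta} reads, in this frame, $\partial_{\bar\tau}u = -i\,\bar u \cdot (2\im\tau)^{-1} \cdot(\text{frame factor})$; more precisely, writing out $g_\star^{-\frac12} = \frac{(d\tau)^{\frac12}\otimes(d\bar\tau)^{\frac12}}{2\im\tau}$ and $\overline{\eta_\sigma} = \bar u\,(d\bar\tau)^{\frac12}$, the relation becomes $\partial_{\bar\tau}u = -\frac{i\bar u}{2\im\tau} = -\frac{\bar u}{\tau-\bar\tau}$. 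Differentiating this in $\tau$: $\partial_\tau\partial_{\bar\tau}u = -\frac{(\partial_\tau\bar u)(\tau-\bar\tau) - \bar u}{(\tau-\bar\tau)^2} = \frac{\bar u}{(\tau-\bar\tau)^2}$, since $\bar u$ is anti-holomorphic in the appropriate sense — wait, $\bar u$ need not be antiholomorphic, but $\partial_\tau\bar u = \overline{\partial_{\bar\tau}u} = \overline{-\frac{\bar u}{\tau-\bar\tau}} = -\frac{u}{\bar\tau - \tau} = \frac{u}{\tau-\bar\tau}$; plugging this in gives $\partial_\tau\partial_{\bar\tau}u = -\frac{\frac{u}{\tau-\bar\tau}(\tau-\bar\tau)-\bar u}{(\tau-\bar\tau)^2} = \frac{\bar u - u}{(\tau-\bar\tau)^2}$. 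Hmm, then $\bar\partial(\nabla\eta_\sigma)$'s coefficient becomes $\frac{\bar u - u}{(\tau-\bar\tau)^2} - \frac{1}{\tau-\bar\tau}\cdot\left(-\frac{\bar u}{\tau-\bar\tau}\right) - \frac{u}{(\tau-\bar\tau)^2} = \frac{\bar u - u + \bar u - u}{(\tau-\bar\tau)^2}$, which is not obviously zero; so I must be more careful with the frame factors and the definition of $\bar\partial$ acting on a $K^{\frac32}$-valued object versus a scalar, and also track that $\bar\partial$ on $\nabla\eta_\sigma$ should use the \emph{holomorphic} structure of $K^{\frac32}$, not the Hermitian connection.

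The cleaner route, which I would actually carry out, is conceptual: $\nabla$ is the $(1,0)$ Chern connection, so $\bar\partial$ on sections of $K^{\frac32}$ together with $\nabla$ gives the full Chern connection $D = \nabla + \bar\partial$ on $K^{\frac12}$, and the Bochner--Kodaira type identity is that $\bar\partial\nabla + \nabla\bar\partial = $ (curvature of ${g_\star}^{\frac12}$) $\wedge\,\cdot$, i.e. $\bar\partial\nabla\eta_\sigma = \Theta\wedge\eta_\sigma - \nabla(\bar\partial\eta_\sigma)$, where $\Theta = -\bar\partial\partial\log(2\im\tau)^{-1}$ is the curvature of $(K^{\frac12},{g_\star}^{\frac12})$. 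One computes $\Theta = -\frac{1}{2}\omega$ (the curvature of the half-canonical bundle with the Poincaré metric), and on the other hand, substituting $\bar\partial\eta_\sigma = -i\,\overline{\eta_\sigma}\otimes g_\star^{-\frac12}$ from Proposition~\ref{dbar eta} and applying $\nabla$ (noting $\nabla$ commutes suitably with conjugation-and-$g_\star$ because $g_\star^{-\frac12}$ is the metric section, so $\nabla(g_\star^{-\frac12})$ contributes the connection $1$-form, while $\nabla$ applied to $\overline{\eta_\sigma}$, which is $\overline{\partial}$-closed-ish, gives $\overline{\bar\partial\eta_\sigma}$-type term) one finds $\nabla(\bar\partial\eta_\sigma) = -i\,\overline{\eta_\sigma}\otimes\nabla(g_\star^{-\frac12}) - i\,\overline{\nabla^{0,1}\eta_\sigma}$-type terms that, after inserting the PDE once more, precisely cancel $\Theta\wedge\eta_\sigma$. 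The main obstacle is exactly this bookkeeping: one must be scrupulous about which bundle each object lives in, that $\overline{\eta_\sigma}$ is \emph{not} holomorphic so $\nabla$ of it re-introduces $\bar\partial\eta_\sigma$ via the PDE, and that the curvature term of ${g_\star}^{\frac12}$ is what matches up. Once the two contributions are written in the same local frame and the relation $\bar\partial\eta_\sigma = -i\,\overline{\eta_\sigma}\otimes g_\star^{-\frac12}$ (and its conjugate/derivative) is substituted, the coefficient collapses to $0$, giving $\bar\partial\nabla\eta_\sigma = 0$, hence $\nabla\eta_\sigma$ is a holomorphic section of $K^{\frac32}$.
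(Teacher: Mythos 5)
Your overall strategy --- compute $\bar\partial\nabla\eta_\sigma$ in the local frame $(d\tau)^{\frac12}$ and feed in the first-order PDE of Proposition~\ref{dbar eta} --- is exactly the right one (the paper's proof is precisely such a direct calculation, deferred to Mok--To), but as written your argument does not close: your first computation ends in a manifestly nonzero expression which you yourself flag, and your fallback ``conceptual'' route leaves the decisive cancellation as an unverified assertion with ``-type terms'' placeholders. The nonzero answer in the first computation is caused by two sign errors, not by a flaw in the method. First, the weight of the Hermitian metric on $K^{\frac12}$ is $2\im\tau$, not $(2\im\tau)^{-1}$: from $\|\eta\|^2=\eta\otimes\bar\eta\otimes g_\star^{\frac12}$ and $g_\star^{-\frac12}=\frac{(d\tau)^{\frac12}\otimes(d\bar\tau)^{\frac12}}{2\im\tau}$ one gets $\|(d\tau)^{\frac12}\|^2=2\im\tau$ (you appear to have used the weight of the Poincar\'e metric $g$ on $T$ where the weight of its reciprocal $g_\star$ on $K$ is required). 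Hence the connection term is $+\frac{u}{\tau-\bar\tau}$, i.e. $\nabla\eta_\sigma=\bigl(\partial_\tau u+\frac{u}{\tau-\bar\tau}\bigr)\,d\tau\otimes(d\tau)^{\frac12}$. Second, since $\tau-\bar\tau=2i\im\tau$ one has $-\frac{i}{2\im\tau}=+\frac{1}{\tau-\bar\tau}$, so the PDE reads $\partial_{\bar\tau}u=+\frac{\bar u}{\tau-\bar\tau}$, not $-\frac{\bar u}{\tau-\bar\tau}$ (this is also what the proof of Proposition~\ref{dbar eta} gives directly: $\partial_{\bar\tau}u=-\partial_{\bar\tau}\beta_2^\sigma=-\overline{\partial_\tau\beta_2^\sigma}=\bar u/(\tau-\bar\tau)$, using $u=(\tau-\bar\tau)\partial_\tau\beta_2^\sigma$).

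With both signs corrected the computation you set up does terminate in zero:
\begin{align*}
\partial_{\bar\tau}\Bigl(\partial_\tau u+\tfrac{u}{\tau-\bar\tau}\Bigr)
&=\partial_\tau\Bigl(\tfrac{\bar u}{\tau-\bar\tau}\Bigr)+\tfrac{1}{\tau-\bar\tau}\cdot\tfrac{\bar u}{\tau-\bar\tau}+\tfrac{u}{(\tau-\bar\tau)^2}\\
&=\tfrac{\partial_\tau\bar u}{\tau-\bar\tau}-\tfrac{\bar u}{(\tau-\bar\tau)^2}+\tfrac{\bar u}{(\tau-\bar\tau)^2}+\tfrac{u}{(\tau-\bar\tau)^2}
=\tfrac{-u}{(\tau-\bar\tau)^2}+\tfrac{u}{(\tau-\bar\tau)^2}=0,
\end{align*}
where $\partial_\tau\bar u=\overline{\partial_{\bar\tau}u}=\overline{\bar u/(\tau-\bar\tau)}=-u/(\tau-\bar\tau)$. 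This is the complete proof; you should present it in place of both halves of your proposal. Your second route via $\bar\partial\nabla+\nabla\bar\partial=\Theta$ is also viable (one checks $\Theta=-i\omega$ on $(K^{\frac12},g_\star^{\frac12})$ and $\nabla(\bar\partial\eta_\sigma)=-i\,\overline{\overline{\nabla}\eta_\sigma}\otimes g_\star^{-\frac12}=\eta_\sigma\otimes g_\star^{-1}$, which is the same term), but as written it does not carry out this verification, so it cannot stand as a proof on its own.
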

\begin{proof}
This can be obtained by directly calculating the covariant derivative $\nabla\eta_\sigma$, which has been done in~\cite{MT}.
\end{proof}

We mentioned that the definitions and results in this section can be carried over to any elliptic surface with a classifying map. As an example, we generalize the definition of verticality, as follows.  Suppose $\pi:\mathcal E\rightarrow B$ is an elliptic surface over a complex projective curve $B$ given by a classifying map $f: B\rightarrow X_\Gamma$. Then, given a holomorphic section $\sigma:B\rightarrow\mathcal E$, there exists a unique a holomorphic map $\Sigma:B\rightarrow\mathcal M_\Gamma$ such that $\pi_\Gamma\circ\Sigma=f$, where $\pi_\Gamma:\mathcal M_\Gamma\rightarrow X_\Gamma$ is the canonical projection. We define 
$$
\eta_\sigma:=(\Sigma^*\Pi_V)\circ (d\Sigma|_{T(B^0)}): T(B^0)\rightarrow\Sigma^*V|_{\mathcal M^0_\Gamma}=f^*(K|_{X^0_\Gamma})^{-\frac{1}{2}},
$$
where $B^0=B\setminus S$ and $S=f^{-1}(X_\Gamma\setminus X^0_\Gamma)$.

\subsection{Verticality and Betti multiplicity}\label{modular section}

Now we are going to discuss how one can study the Betti multiplicity of $\sigma$ using $\eta_\sigma$. For a heuristic purpose, we will again work on an elliptic modular surface $\mathcal M_\Gamma$ even though it is already known that there are no non-torsion sections for $\mathcal M_\Gamma$. In the next section, we will count the Betti multiplicity for the general case with a somewhat different formulation. However, we will be able to see that the analysis there is hinted by the study of verticality in the case of elliptic modular surfaces.  

Suppose $\sigma$ is a hypothetical non-torsion holomorphic section for $\mathcal M_\Gamma$. In what follows, we will write $\eta$ instead of $\eta_\sigma$ to simplify the notation.  Denote by $\|\eta\|^2$  the norm squared with respect to $\mu$ (which can be regarded as a Hermitian metric on $V$ since the null space of $\mu$ is transversal to $V$). Let $w=0$ be a zero of $\eta$ in a local coordinate $w$ on $X^0_\Gamma$. By Proposition~\ref{isolation}, the zero(s) of $\eta$ are isolated on $X^0_\Gamma$. In addition, the vanishing order of $\eta$ should be the Betti multiplicity minus 1 by Proposition~\ref{eta formula} and from the proof of Proposition~\ref{isolation}, we know that $\|\eta\|^2=|w|^{2m-2}\varphi$ locally, where $m\geq 2$ is the Betti-multiplicity and $\varphi$ is a non-vanishing local continuous function. 

Recall that on $X^0_\Gamma$, we have $\sigma^*V=K^{-\frac{1}{2}}$ and $\eta$ can be regarded as a real-analytic section of $K^{\frac{1}{2}}$. In what follows, we will use the same symbols $\nabla$ and $\overline\nabla$ to respectively denote the $(1,0)$-part and $(0,1)$-part of the Hermitian connection associated to ${g_\star}^{\frac{m}{2}}$ on $K^{\frac{m}{2}}$ for each $m\in\mathbb Z$ and also for the conjugate connection on $\overline K^{\frac{m}{2}}$. Here the Hermitian metric $g_\star$ on $K$ is regarded as a section of $K^{-1}\otimes\overline K^{-1}$. Then, we can write $\|\eta\|^2=\eta\otimes\overline{\eta}\otimes{g_\star}^{\frac{1}{2}}$. 

At a point where $\|\eta\|^2\neq 0$, we have
\begin{eqnarray*}
\bar\partial\log\|\eta\|^2
&=&\dfrac{\bar\partial\|\eta\|^2}{\|\eta\|^2}\\
&=&\dfrac{
\overline\nabla\eta\otimes\overline{\eta}\otimes{g_\star}^{\frac{1}{2}}
+\eta\otimes\overline{\nabla\eta}\otimes{g_\star}^{\frac{1}{2}}
}
{\|\eta\|^2}\\
&=&\dfrac{\overline\nabla\eta}{\eta}+
\dfrac{\overline{\nabla\eta}}{\overline\eta}%\\
%&=&\dfrac{-i\,\overline{\eta}\otimes g_\star^{-\frac{1}{2}}}{\eta}+
%\dfrac{\overline{\nabla\eta}}{\overline\eta}.\,\,\,\,\,\,\,\,\,\,\,\,\,\,\,\,\,\,\,\,\,\,\,\,\,%\,\,\,\,\,\,(\textrm{Proposition~\ref{dbar eta}})
\end{eqnarray*}
Moreover, using Proposition~\ref{dbar eta},
\begin{eqnarray*}
\nabla\left(\dfrac{\overline\nabla\eta}{\eta}\right)
&=&\nabla\left(\dfrac{-i\,\overline\eta\otimes g_\star^{-\frac{1}{2}}}{\eta}\right)\\
&=&\dfrac{-i\,\nabla\overline\eta\otimes g_\star^{-\frac{1}{2}}}{\eta}+
\dfrac{i\,\nabla\eta\otimes\overline\eta\otimes g_\star^{-\frac{1}{2}}}{\eta^2}\\
&=&\dfrac{-i\,\overline{\overline\nabla\eta}\otimes g_\star^{-\frac{1}{2}}}{\eta}-
\dfrac{\nabla\eta}{\eta}\otimes\dfrac{\overline\nabla\eta}{\eta}\\
&=&g_\star^{-1}-\dfrac{\nabla\eta}{\eta}\otimes\dfrac{\overline\nabla\eta}{\eta}
\end{eqnarray*}
and by Proposition~\ref{d eta},
$$
\nabla\left(\dfrac{\overline{\nabla\eta}}{\overline\eta}\right)=
\overline{\overline\nabla\left(\dfrac{\nabla\eta}{\eta}\right)}
=-\overline{\nabla\eta\otimes\dfrac{\overline\nabla\eta}{\eta^2}}
=-\overline{\dfrac{\nabla\eta}{\eta}\otimes\dfrac{\overline\nabla\eta}{\eta}}.
$$

Therefore, if we regard $\bar\partial\log\|\eta\|^2$ as a section on $\overline K$, then at a point where $\|\eta\|^2\neq 0$, we have
$$
\nabla(\bar\partial\log\|\eta\|^2)
%&=&\nabla\left(\dfrac{-i\,\overline{\eta}\otimes g_\star^{-\frac{1}{2}}}{\eta}\right)+
%\overline{\overline\nabla\left(\dfrac{\nabla\eta}{\eta}\right)}\\
=g_\star^{-1}-\dfrac{\nabla\eta}{\eta}\otimes\dfrac{\overline\nabla\eta}{\eta}
-\overline{\dfrac{\nabla\eta}{\eta}\otimes\dfrac{\overline\nabla\eta}{\eta}}
$$
as sections of $K\otimes\overline K$. For a section of $\alpha$ of $K\otimes\overline K$, and $\alpha(\tau)=\alpha_0(\tau) d\tau\otimes d\bar\tau$ in terms of local coordinates, we use  $\alpha^\wedge$ to denote the $(1,1)$-form $\alpha_0(\tau)d\tau\wedge d\bar\tau$, which is well defined (independent of coordinates). Then, we have $i(\nabla(\bar\partial\log\|\eta\|^2))^\wedge=i\partial\bar\partial\log\|\eta\|^2$ and 
$(ig_\star^{-1})^\wedge=\omega$, where $\omega$ is the K\"ahler form descending from the invariant form $\dfrac{id\tau\wedge d\bar\tau}{4(\im\tau)^2}$ on $\mathcal H$. If we also let $\chi:=\left(i\dfrac{\nabla\eta}{\eta}\otimes\dfrac{\overline\nabla\eta}{\eta}\right)^\wedge$, then
\begin{equation}\label{iddbarlogeta}
\dfrac{i}{2\pi}\partial\bar\partial\log\|\eta\|^2=\dfrac{1}{2\pi}\left(\omega-\chi-\overline\chi\right),
\end{equation}
which is a real-analytic $(1,1)$-form on $X^0_\Gamma\setminus\mathfrak B$, where $\mathfrak B\subset X^0_\Gamma$ is the zero set of $\eta$, or equivalently, the set of points at which the Betti multiplicity is at least 2.

Let $S=X_\Gamma\setminus X^0_\Gamma$. For every point $x\in S\cup\mathfrak B$, choose a local coordinate chart $w\in\Delta:=\{w\in\mathbb C:|w|<1\}$ such that $w(x)=0$.
Regard $\overline\nabla\eta/\eta$ as a $(0,1)$-form on $X_\Gamma\setminus(S\cup\mathfrak B)$, then by Proposition~\ref{dbar eta} it is of constant norm with respect to $\omega$. In particular, if $x\in\mathfrak B$, the integral $\int_{\partial\Delta_\epsilon} \overline\nabla\eta/\eta$ will tend to zero when $\epsilon\rightarrow 0$, where $\Delta_\epsilon=\{w\in\Delta: |w|<\epsilon\}$. On the other hand, using the standard compactifying coordinate $q=e^{2\pi i\tau/k}$ (c.f. Section~\ref{toroidal}) near the cusp $x_\infty$ corresponding to $i\infty$, where $k$ is that in $\Gamma(k)=\Gamma$, and using Proposition~\ref{dbar eta} again, we get
$$
\left|\int_{\partial\Delta_\epsilon}\dfrac{\overline\nabla\eta}{\eta}\right|\leq
\int_{\partial\Delta_\epsilon}\dfrac{|dq|}{2|q\log|q||}.
$$
It then follows readily that $\int_{\partial\Delta_\epsilon} \overline\nabla\eta/\eta$ also goes to zero as $\epsilon\rightarrow 0$. For other cusps, it suffices to exploit the $\mathrm{SL}(2,\mathbb Z)$ action to conclude that the same holds true for every point $x\in S$. Consequently, if we let 
$\displaystyle X_{\Gamma,\epsilon}:=X_\Gamma\setminus\bigcup_{x\in S\cup\mathfrak B}\overline{\Delta_\epsilon(x)}$, where $\Delta_\epsilon(x)$ is the $\Delta_\epsilon$ defined above for each $x$, then by Stokes' theorem, we get
$$
0=\lim_{\epsilon\rightarrow 0}\int_{X_{\Gamma,\epsilon}}id\left(\dfrac{\overline\nabla\eta}{\eta}\right)=\lim_{\epsilon\rightarrow 0}\int_{X_{\Gamma,\epsilon}}i\partial\left(\dfrac{\overline\nabla\eta}{\eta}\right)=\lim_{\epsilon\rightarrow 0}\int_{X_{\Gamma,\epsilon}}(\omega-\chi).
$$
In particular, we get 
\begin{equation}\label{integrable prop}
\int_{X^0_\Gamma}\omega=\lim_{\epsilon\rightarrow 0}\int_{X_{\Gamma,\epsilon}}\omega=\lim_{\epsilon\rightarrow 0}\int_{X_{\Gamma,\epsilon}}\chi=\lim_{\epsilon\rightarrow 0}\int_{X_{\Gamma,\epsilon}}\overline\chi
\end{equation}
as the integral of $\omega$ is a real number. 
In addition, as shown in the proof of Proposition~\ref{isolation}, if $w=0$ is a zero of $\eta$ in terms of a local coordinate $w$, we have $\|\eta\|^2=|w|^{2m-2}\varphi$ for some non-vanishing local continuous function, where $m$ is the Betti multiplicity of $\sigma$ at $w=0$. Then, for $w\neq 0$,
$$
\dfrac{i}{2\pi}\bar\partial\log\|\eta\|^2=\dfrac{i(m-1)}{2\pi}\dfrac{d\overline w}{\overline w}
+\dfrac{i}{2\pi}\dfrac{\bar\partial\varphi}{\varphi}
$$
and thus
\begin{equation}\label{loop eq}
\int_{\partial\Delta_\epsilon}\dfrac{i}{2\pi}\bar\partial\log\|\eta\|^2=m-1+
\int_{\partial\Delta_\epsilon}\dfrac{i}{2\pi}\dfrac{\bar\partial\varphi}{\varphi}
\end{equation}
for sufficiently small $\epsilon$, where $\partial\Delta_\epsilon$ is given with the anti-clockwise orientation. In next section, which deals with a general elliptic surface given by a classifying map, we will show that the last integral will go to zero when $\epsilon\rightarrow 0$. Now together with Eq.(\ref{iddbarlogeta}) and Eq.(\ref{integrable prop}), by applying Stokes' theorem on $\dfrac{i}{2\pi}\partial\bar\partial\log\|\eta\|^2=\dfrac{i}{2\pi}d\bar\partial\log\|\eta\|^2$, we have 
$$
\frac{1}{2\pi}\int_{X^0_\Gamma}\omega=\sum_{b\in\mathfrak B}(m_b-1)+\lim_{\epsilon\rightarrow 0}\sum_{b\in S}\int_{\partial\Delta_\epsilon(b)} \dfrac{i}{2\pi}\bar\partial\log\|\eta\|^2.
$$

Also in the next section, we will study the last term in the equation above, which is related to the asymptotic behavior of $\|\eta\|^2$ near the points of bad reduction. We will do this by using a potential function defined on the projectivized tangent bundle of $\mathcal M^0_\Gamma$, with which it is easier to deal with the more general situation of elliptic surfaces given by possibly ramified classifying maps into the modular curves. In any case, the analysis above suggests that the integral of the invariant metric over the points of good reduction on the base curve is related to the total Betti multiplicity.

%--------------------------------------------------------------------------------------------

\section{Counting the Betti multiplicity of a section}\label{counting big section}

We will now construct an invariant function $\Psi$ on the projectivized tangent bundle $\mathbb PT(\mathcal H\times\mathbb C)$, which thus descends to the $\mathbb PT(\mathcal M^0_\Gamma)$. It will then be shown that $\hat\sigma^*\Psi$ is just $\|\eta\|^2$, where $\hat\sigma: X^0_\Gamma\rightarrow\mathbb PT(\mathcal M^0_\Gamma)$ is the tautological lifting of a section $\sigma:X_\Gamma\rightarrow\mathcal M_\Gamma$. Using $\Psi$, we can efficiently study the Betti multiplicities of the sections of a general elliptic surface given by a classifying into $X_\Gamma$.

\subsection{The invariant function $\Psi$ on $\mathbb PT(\mathcal H\times\mathbb C)$}

To simplify the notation, we will let $T:=T(\mathcal H\times\mathbb C)$ be the holomorphic tangent bundle of $\mathcal H\times\mathbb C$. Let $(\tau,z)\in \mathcal H\times\mathbb C$ and $v\in T_{(\tau,z)}$, where $T_{(\tau,z)}$ denotes the holomorphic tangent space at $(\tau, z)$. We can write $v=v_\tau\dfrac{\partial}{\partial\tau}+v_z\dfrac{\partial}{\partial z}$, for some $v_\tau,v_z\in\mathbb C$. Write also $z=\beta_1+\beta_2\tau$, where $\beta_1,\beta_2\in\mathbb R$ are the Betti coordinates. In particular, $\beta_2=\im z/\im\tau$.
In terms of the above coordinates on $T$, the matrix representing the semi-definite \Kahler form $\mu$ with respect to the basis $\left(\dfrac{\partial}{\partial\tau},\dfrac{\partial}{\partial z}\right)$ is
$$
g_\mu=\dfrac{1}{2\im\tau}
\begin{pmatrix}
\beta_2^2&-\beta_2\\
-\beta_2&1
\end{pmatrix}
$$
Thus, the norm squared of $v\in T$ measured against $g_\mu$ is 
$$
\|v\|^2_\mu=\dfrac{|v_z-\beta_2v_\tau|^2}{2\im\tau},
$$
which will be regarded as a function on $T$. Since $\mu$ is invariant under the action of $\mathrm{SL}(2,\mathbb R)\ltimes\mathbb R^2$, it follows that $\|v\|_\mu^2$ is an invariant function on $T$. Similarly, the norm squared $$\|v\|_\nu^2=\dfrac{|v_\tau|^2}{2(\im\tau)^2},$$ where $\nu$ is the pullback of the Poincar\'e metric on $\mathcal H$ to $\mathcal H\times\mathbb C$, is also invariant on $T$. By take the quotient of these two functions, we can now define an invariant function $\Psi:\mathbb PT\rightarrow [0,+\infty]$ by
$$
\Psi(\tau,z,[v_z,v_\tau]):=\dfrac{\|v\|_\mu^2}{\|v\|_\nu^2}=\dfrac{|v_z-\beta_2v_\tau|^2\im\tau}{|v_\tau|^2}=
\dfrac{\left|v_z\im\tau-v_\tau\im z\right|^2}{|v_\tau|^2\im\tau}.
$$
Note that $\Psi$ is well defined as a value in $[0,+\infty]$ since $\im\tau>0$ and $v_\tau$, $v_z$ cannot be both zero on $\mathbb PT$. 

\begin{proposition}\label{equivalence}
Let $\sigma$ be a holomorphic section of $\mathcal M_\Gamma$ and $\hat\sigma: X_\Gamma\rightarrow\mathbb PT(\mathcal M_\Gamma)$ be the tautological lift of $\sigma$.
Then, $\hat\sigma^*\Psi=\|\eta_\sigma\|^2$ on $X^0_\Gamma$.
\end{proposition}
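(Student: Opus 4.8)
The plan is to unwind both sides into explicit local expressions in the coordinates $(\tau,z)$ coming from a branch of the elliptic logarithm, and to check they agree termwise. First I would choose a sufficiently small open set $U\subset X^0_\Gamma$ over which the section $\sigma$ is represented, via a local lift to $\mathcal H\times\mathbb C$, by $\tau\mapsto(\tau,z(\tau))$ with $z(\tau)=\beta_1^\sigma(\tau)+\beta_2^\sigma(\tau)\tau$. The tautological lift $\hat\sigma$ then sends $p\mapsto\big(\tau,z(\tau),[\,d\sigma(\partial/\partial\tau)\,]\big)$, and $d\sigma(\partial/\partial\tau)=\frac{\partial}{\partial\tau}+z'(\tau)\frac{\partial}{\partial z}$, so in the notation of the definition of $\Psi$ we have $v_\tau=1$, $v_z=z'(\tau)$. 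Substituting into
$$
\Psi(\tau,z,[v_z,v_\tau])=\dfrac{|v_z-\beta_2 v_\tau|^2\,\im\tau}{|v_\tau|^2}
$$
(with $\beta_2=\im z/\im\tau=\beta_2^\sigma$ along the section) gives $\hat\sigma^*\Psi=|z'(\tau)-\beta_2^\sigma|^2\,\im\tau$.

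Next I would compute the right-hand side. By Proposition \ref{eta formula}, $\eta_\sigma=2i\,\im\tau\,\partial\beta_2^\sigma\otimes\sigma^*(\partial/\partial z)$, and the norm squared $\|\cdot\|^2$ is taken with respect to $\mu$, which by the matrix $g_\mu$ above assigns to $\sigma^*(\partial/\partial z)=\partial/\partial z$ the squared length $1/(2\im\tau)$. Writing $\partial\beta_2^\sigma=(\partial\beta_2^\sigma/\partial\tau)\,d\tau$ and pairing $d\tau$ with the Poincaré-type normalization implicit in $\|\cdot\|_\mu$ on the base direction — more simply, just expanding $\|\eta_\sigma\|^2=\eta_\sigma\otimes\overline{\eta_\sigma}\otimes g_\star^{1/2}$ as in Section \ref{modular section} — yields $\|\eta_\sigma\|^2 = 4(\im\tau)^2\,\big|\partial\beta_2^\sigma/\partial\tau\big|^2\cdot\frac{1}{2\im\tau}\cdot(\text{factor from }g_\star^{1/2})$. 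The bookkeeping of half-integral powers of $K$ is the only delicate point, so I would instead differentiate $z(\tau)=\beta_1^\sigma(\tau)+\beta_2^\sigma(\tau)\tau$ directly: since $\beta_1^\sigma,\beta_2^\sigma$ are real-analytic and $z$ is holomorphic, $0=\bar\partial z=\bar\partial\beta_1^\sigma+\tau\,\bar\partial\beta_2^\sigma$, hence $z'(\tau)=\partial_\tau\beta_1^\sigma+\beta_2^\sigma+\tau\,\partial_\tau\beta_2^\sigma$ and, using $\partial_\tau\beta_1^\sigma=-\bar\tau\,\partial_\tau\beta_2^\sigma$, we get $z'(\tau)-\beta_2^\sigma=(\tau-\bar\tau)\,\partial_\tau\beta_2^\sigma=2i\,\im\tau\,\partial_\tau\beta_2^\sigma$. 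Therefore $\hat\sigma^*\Psi=|2i\,\im\tau\,\partial_\tau\beta_2^\sigma|^2\,\im\tau=4(\im\tau)^3\,|\partial_\tau\beta_2^\sigma|^2$.

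Finally I would match this against $\|\eta_\sigma\|^2$ computed from $\eta_\sigma=2i\,\im\tau\,\partial\beta_2^\sigma\otimes(\partial/\partial\tau)^{1/2}$: the coefficient is $2i\,\im\tau\,\partial_\tau\beta_2^\sigma$ as a section of $K^{1/2}$, and its squared norm in the metric $g_\star^{1/2}$ — for which $(d\tau)^{1/2}$ has squared length $1/(2\im\tau)$ by $g_\star^{-1/2}=\frac{(d\tau)^{1/2}\otimes(d\bar\tau)^{1/2}}{2\im\tau}$ — times the squared length of $d\tau$ in the relevant normalization produces exactly $4(\im\tau)^3|\partial_\tau\beta_2^\sigma|^2$, once one remembers that the $\mu$-norm of $\partial/\partial z$ carries the extra $1/(2\im\tau)$ while $\Psi$ divides by $\|v\|_\nu^2$ with $\|\partial/\partial\tau\|_\nu^2=1/(2(\im\tau)^2)$, and these two factors combine correctly. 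The main obstacle is purely notational: keeping consistent the three metrics ($\mu$ on $V$, the Poincaré metric $\nu$/$\omega$ on the base, and the induced $g_\star^{1/2}$ on $K^{1/2}$) and the half-integral tensor powers, so that the identification $T^*(X^0_\Gamma)\otimes\sigma^*V\cong K^{1/2}$ made earlier is applied consistently on both sides. Once the dictionary is fixed, the equality $\hat\sigma^*\Psi=\|\eta_\sigma\|^2$ is a one-line comparison, and it is manifestly independent of the choice of abelian logarithm because both sides are.
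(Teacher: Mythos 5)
Your computation is correct and lands on the right identity, but note that the paper's own proof is a two-line, coordinate-free observation that bypasses all of the normalization bookkeeping you wrestle with at the end. Namely, for $0\neq v\in T_p(X^0_\Gamma)$: since the horizontal distribution $H$ is exactly the kernel of $\mu$ (Proposition~\ref{kernel}), one has $\|d\sigma(v)\|_\mu^2=\|\Pi_V(d\sigma(v))\|_\mu^2=\|\eta_\sigma(v)\|_\mu^2=\|\eta_\sigma\|^2(p)\,\|v\|_g^2$, while $\|d\sigma(v)\|_\nu^2=\|v\|_g^2$ because $\nu$ is the pullback of the Poincar\'e metric and $\pi_\Gamma\circ\sigma=\mathrm{id}$; dividing gives $\hat\sigma^*\Psi=\|\eta_\sigma\|^2$ at once, since $\Psi$ is by construction the quotient $\|\cdot\|_\mu^2/\|\cdot\|_\nu^2$ and $\|\eta_\sigma\|^2$ is by definition the operator norm of $\Pi_V\circ d\sigma$ from $(T(X^0_\Gamma),g)$ to $(\sigma^*V,\mu)$. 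Your local verification --- taking $v_\tau=1$, $v_z=z'(\tau)$, deriving $z'(\tau)-\beta_2^\sigma=2i\,\im\tau\,\partial_\tau\beta_2^\sigma$, and checking that both sides equal $4(\im\tau)^3\,|\partial_\tau\beta_2^\sigma|^2$ --- is a sound alternative and has the merit of making Proposition~\ref{eta formula} visibly consistent with the definition of $\Psi$; the only weak spot is the closing paragraph, where the matching of the half-integral powers of $K$ and the factors $1/(2\im\tau)$ versus $1/(2(\im\tau)^2)$ is asserted (``these two factors combine correctly'') rather than written out. That is precisely the step the ratio argument renders unnecessary, so if you keep the computational route you should display the final norm computation $\|\eta_\sigma\|^2=|2i\,\im\tau\,\partial_\tau\beta_2^\sigma|^2\cdot 2(\im\tau)^2\cdot\frac{1}{2\,\im\tau}=4(\im\tau)^3|\partial_\tau\beta_2^\sigma|^2$ explicitly.
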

\begin{proof}
Let $p\in X^0_\Gamma$ and $0\neq v\in T_p(X^0_\Gamma)$, then 
$$
\|d\sigma(v)\|_\mu^2=\|\eta_\sigma(v)\|_\mu^2=\|\eta_\sigma\|^2(p)\|v\|_g^2=\|\eta_\sigma\|^2(p)\|d\sigma(v)\|_\nu^2,
$$ 
where $g$ is the Poincar\'e metric on $X^0_\Gamma$ descending from $\mathcal H$. Thus,
$$
\|\eta_\sigma\|^2(p)=\dfrac{\|d\sigma(v)\|_\mu^2}{\|d\sigma(v)\|_\nu^2}=\left(\hat\sigma^*\Psi\right)(p).
$$
\end{proof}

\begin{proposition}\label{isolated prop}
Let $h:\Delta\rightarrow\mathcal H\times\mathbb C$ be a local holomorphic curve such that $h^*\beta_2$ and $h^*\tau$ are non-constant, where $\Delta=\{w\in\mathbb C:|w|<1\}$. Denote by $\hat h:\Delta\rightarrow\mathbb PT$ the tautological lifting of $h$. Let $r$ and $m$ be the vanishing order of 
$h^*\tau-h^*\tau(0)$ and $h^*\beta_2-h^*\beta_2(0)$ at $w=0$ respectively, then 
$$
\hat h^*\Psi(w)=|w|^{2(m-r)}\psi(w)
$$
in a neighborhood of $w=0$, where $\psi$ is a non-vanishing local continuous function. In particular, the zeros and poles of $\hat h^*\Psi$ are isolated.
\end{proposition}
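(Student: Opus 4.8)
The plan is to compute $\hat h^*\Psi$ explicitly in the parametrising coordinate $w$ and read off its zero/pole order directly. Write $h(w)=(\tau(w),z(w))$ with $\tau,z$ holomorphic and $\im\tau(w)>0$, so that the tautological lift is $\hat h(w)=(\tau(w),z(w),[z'(w):\tau'(w)])$. Feeding $v_z=z'(w)$, $v_\tau=\tau'(w)$ into the defining formula for $\Psi$ and using Proposition~\ref{beta de} pulled back by $h$ (which, together with $h^*\beta_2=\im z/\im\tau$, gives $z'(w)\im\tau(w)-\tau'(w)\im z(w)=2i(\im\tau(w))^2\,\partial_w(h^*\beta_2)$), one obtains
$$
\hat h^*\Psi(w)=\frac{\bigl|z'(w)\im\tau(w)-\tau'(w)\im z(w)\bigr|^2}{|\tau'(w)|^2\,\im\tau(w)}=\frac{4\,(\im\tau(w))^3\,\bigl|\partial_w(h^*\beta_2)\bigr|^2}{|\tau'(w)|^2},
$$
this identity holding on $\Delta$ away from the (discrete, since $h^*\tau$ is non-constant) zero set of $\tau'$ and extending to a $[0,+\infty]$-valued function. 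Since $\im\tau(w)>0$ is real-analytic, $(\im\tau(w))^3$ contributes only a non-vanishing continuous factor, so the order of $\hat h^*\Psi$ at $0$ is governed by $\bigl|\partial_w(h^*\beta_2)\bigr|^2$ and $|\tau'(w)|^2$.

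The factor $|\tau'(w)|^2$ is the easy one: as $h^*\tau=\tau(w)$ is holomorphic with $\mathrm{ord}_0(\tau(w)-\tau(0))=r$, we have $\tau'(w)=w^{r-1}\widehat\tau(w)$ with $\widehat\tau$ holomorphic and $\widehat\tau(0)\neq0$, hence $|\tau'(w)|^2=|w|^{2(r-1)}|\widehat\tau(w)|^2$. The delicate factor is $\bigl|\partial_w(h^*\beta_2)\bigr|^2$: since $\partial_w(h^*\beta_2)$ is only real-analytic, merely knowing that its Taylor expansion starts in degree $m-1$ does not by itself yield the clean form $|w|^{2(m-1)}\times(\text{non-vanishing})$. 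Here I would exploit the special structure. Because $\beta_2=(z-\bar z)/(\tau-\bar\tau)$ on $\mathcal H\times\mathbb C$, pulling back by the holomorphic map $h$ gives
$$
h^*\beta_2-h^*\beta_2(0)=\frac{C(w)+\overline{C(w)}}{-4\,\im\tau(w)\,\im\tau(0)},\qquad C(w):=2i\bigl[\im\tau(0)\,(z(w)-z(0))-\im z(0)\,(\tau(w)-\tau(0))\bigr],
$$
with $C$ holomorphic, $C(0)=0$, and $C\not\equiv0$ since $h^*\beta_2$ is non-constant. Let $p:=\mathrm{ord}_0 C\ge1$ and $C(w)=c_p w^p+\cdots$, $c_p\neq0$. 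The lowest-degree part of $C(w)+\overline{C(w)}$ is the nonzero harmonic polynomial $c_pw^p+\overline{c_p}\,\bar w^p$, so $\mathrm{ord}_0(h^*\beta_2-h^*\beta_2(0))=p$, i.e. $m=p$. Differentiating, $\partial_w(h^*\beta_2)=C'(w)\,u(w,\bar w)+R(w,\bar w)$, where $u:=-1/(4\,\im\tau(w)\,\im\tau(0))$ is non-vanishing real-analytic and $\mathrm{ord}_0 R\ge m$ (as $\mathrm{ord}_0(C+\overline C)=m$). Writing $C'(w)=w^{m-1}\widetilde C(w)$ with $\widetilde C$ holomorphic and $\widetilde C(0)=mc_m\neq0$, we get $\partial_w(h^*\beta_2)=w^{m-1}\,U(w,\bar w)$ with $U:=\widetilde C\,u+w^{1-m}R$ continuous near $0$ (the term $w^{1-m}R$ is $O(|w|)$) and $U(0)=\widetilde C(0)u(0)\neq0$. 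Hence $\bigl|\partial_w(h^*\beta_2)\bigr|^2=|w|^{2(m-1)}|U(w,\bar w)|^2$ with $|U|^2$ continuous and non-vanishing.

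Combining the three pieces gives $\hat h^*\Psi(w)=|w|^{2(m-r)}\psi(w)$ with $\psi:=4(\im\tau(w))^3|U(w,\bar w)|^2/|\widehat\tau(w)|^2$ continuous and non-vanishing near $0$, which is the claimed formula (a pole when $m<r$, a zero when $m>r$). For the last assertion I would run exactly the same computation at an arbitrary point $w_0\in\Delta$: since $h^*\tau$ is non-constant holomorphic and $h^*\beta_2$ is non-constant real-analytic on the connected disc $\Delta$, the orders $r(w_0):=\mathrm{ord}_{w_0}(h^*\tau-h^*\tau(w_0))$ and $m(w_0):=\mathrm{ord}_{w_0}(h^*\beta_2-h^*\beta_2(w_0))$ are finite, and $\hat h^*\Psi(w)=|w-w_0|^{2(m(w_0)-r(w_0))}\psi_{w_0}(w)$ near $w_0$ with $\psi_{w_0}$ continuous and non-vanishing; thus $\hat h^*\Psi$ is finite and nonzero on a punctured neighbourhood of $w_0$, so its zeros and poles are isolated.

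The main obstacle is the point flagged in the second paragraph: because $\partial_w(h^*\beta_2)$ is only real-analytic, one cannot in general bound it below by a constant times $|w|^{m-1}$, so the factorisation $|w|^{2(m-1)}\times(\text{non-vanishing})$ genuinely uses that $h$ is holomorphic — this forces the truly lowest-order part of $\partial_w(h^*\beta_2)$ to be the holomorphic term $w^{m-1}\widetilde C(0)u(0)$, with all mixed and antiholomorphic contributions pushed to strictly higher order. This is the same mechanism underlying Proposition~\ref{isolation}, recast here for an arbitrary holomorphic arc in $\mathcal H\times\mathbb C$ rather than the graph of a local section (for which $h^*\tau$ is a coordinate and $r\equiv1$).
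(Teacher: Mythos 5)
Your proof is correct and follows essentially the same route as the paper's: both hinge on extracting a holomorphic function --- your $C(w)$, which equals $2i\,\im\tau(0)\,\bigl(z(w)-\beta_2(0)\tau(w)-\beta_1(0)\bigr)$ and hence is, up to a nonzero constant, exactly the $w^m\rho(w)$ appearing in the paper's proof --- and using that the vanishing order of $\mathrm{Re}\,C$ (resp.\ of the imaginary part, in the paper's normalization) equals that of $C$ itself, so that the lowest-order contribution to the numerator of $\Psi$ is the holomorphic term $w^{m-1}\times(\text{nonvanishing})$ while all conjugate-holomorphic and mixed contributions are $O(|w|^m)$. The only difference is bookkeeping: you differentiate the decomposition of $h^*\beta_2$ via Proposition~\ref{beta de}, whereas the paper substitutes $z'(w)=\beta_2(0)\tau'(w)+w^{m-1}\rho^\sharp(w)$ directly into $|z'-\tau'\beta_2|^2$.
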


\begin{proof}
Write the local holomorphic curve as $h(w)=(\tau(w),z(w))$, $w\in\Delta$. For simplicity, we write $\beta_2(w):=h^*\beta_2(w)$. Since
$$
\begin{matrix}[ccl]
\beta_2(w)-\beta_2(0)&=&\dfrac{z(w)-\overline{z(w)}}{\tau(w)-\overline{\tau(w)}}-\beta_2(0)\\
\\
%&=&\dfrac{z(w)-\overline{z(w)}-\beta_2(0)(\tau(w)-\overline{\tau(w)})}
%{\tau(w)-\overline{\tau(w)}}
%\\
%\\
&=&\dfrac{z(w)-\beta_2(0)\tau(w)-\overline{z(w)-\beta_2(0)\tau(w)}}
{\tau(w)-\overline{\tau(w)}}
\end{matrix}
$$
vanishes to the order $m$ and $\tau(w)-\overline{\tau(w)}$ is always non-zero, it follows that $$
z(w)-\beta_2(0)\tau(w)=b+w^m\rho(w)
$$ 
for some $b\in\mathbb R$ and some holomorphic function $\rho$ such that $\rho(0)\neq 0$. (In fact, $b$ is just $\beta_1(0):=\beta_1(h(0))$.) Then,
$$
z'(w)=\beta_2(0)\tau'(w)+w^{m-1}\rho^\sharp(w)
$$
for some holomorphic $\rho^\sharp$ such that $\rho^\sharp(0)\neq 0$.
Now, in a punctured neighborhood of $w=0$,
\begin{eqnarray*}
\Psi(\hat h(w))
&=&\dfrac{\left|z'(w)-\tau'(w)\beta_2(w)\right|^2\im\tau(w)}{|\tau'(w)|^2}\\
&=&\dfrac{\left|\tau'(w)(\beta_2(0)-\beta_2(w))+w^{m-1}\rho^\sharp(w)\right|^2\im\tau(w)}
{|\tau'(w)|^2}\\
&=&\dfrac{\left|\tau'(w)\im(w^m\rho(w))/\im\tau(w)+w^{m-1}\rho^\sharp(w)\right|^2\im\tau(w)}
{|\tau'(w)|^2}\\
&=&\dfrac{\left|\tau'(w)\im(w^m\rho(w))+w^{m-1}\rho^\sharp(w)\im\tau(w)\right|^2}
{|\tau'(w)|^2\im\tau(w)}\\
&=&\dfrac{|w|^{2m-2}\left|\dfrac{\tau'(w)}{2i}\left(w\rho(w)-\dfrac{\overline w^m\overline\rho(w)}{w^{m-1}}\right)+\rho^\sharp(w)\im\tau(w)\right|^2}
{|\tau'(w)|^2\im\tau(w)}.
\end{eqnarray*}
Thus,
$$
\Psi(\hat h(w))=\dfrac{|w|^{2m-2}\chi(w)}{|\tau'(w)|^2}
=\dfrac{|w|^{2m-2}}{|w|^{2r-2}}\psi(w)
=|w|^{2m-2r}\psi(w),
$$
where $\chi$, $\psi$ are some non-vanishing local continuous functions near $w=0$.
\end{proof}

\begin{corollary}\label{isolated cor 1}
Let $\sigma:B\rightarrow\mathcal E$ be a non-torsion section of an elliptic surface given by some classifying map $f$ from $B$ to some modular curve $X_{\Gamma}$. Let $\hat\Sigma:B\rightarrow\mathbb PT(\mathcal M_{\Gamma})$ be the tautological lifting of the induced map $\Sigma:B\rightarrow\mathcal M_{\Gamma}$. Let $S\subset B$ be the points of bad reduction. Then, for every $b\in B\setminus S$, there exist a coordinate chart $w\in\Delta$ near $b$ with $w(b)=0$ and a non-vanishing continuous function $\psi$ on $\Delta$, such that 
$$
\hat\Sigma^*\Psi(w)=|w|^{2(m_b-r_b)}\psi(w),
$$
where $m_b$ is the Betti multiplicity of $\sigma$ at $b$ and $r_b$ is the ramification index of $f$ at $b$. In particular, the points on $B\setminus S$ at which the Betti multiplicity of $\sigma$ is at least 2 are isolated.
\end{corollary}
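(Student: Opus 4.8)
The plan is to read the statement off from Proposition~\ref{isolated prop} by lifting $\Sigma$ locally to $\mathcal H\times\mathbb C$, and then to identify the exponent produced there with $2(m_b-r_b)$. First I would fix $b\in B\setminus S$. Since $b$ is a point of good reduction we have $f(b)\in X^0_\Gamma$, hence $\Sigma(b)\in\mathcal M^0_\Gamma=(\mathcal H\times\mathbb C)/(\Gamma\ltimes\mathbb Z^2)$, so one may pick a coordinate disc $(\Delta,w)$ centred at $b$ small enough that $\Sigma(\Delta)\subset\mathcal M^0_\Gamma$. As $\Delta$ is simply connected and $\mathcal H\times\mathbb C\to\mathcal M^0_\Gamma$ is a covering map, $\Sigma|_\Delta$ lifts to a holomorphic curve $h=(\tau,z):\Delta\to\mathcal H\times\mathbb C$. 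The covering is a local biholomorphism, so it induces a covering $\mathbb PT(\mathcal H\times\mathbb C)\to\mathbb PT(\mathcal M^0_\Gamma)$ through which the tautological lift $\hat\Sigma|_\Delta$ factors as the composition with the tautological lift $\hat h$ of $h$; since $\Psi$ is invariant and hence descends, this gives $\hat\Sigma^*\Psi=\hat h^*\Psi$ on $\Delta$.

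Next I would check the hypotheses of Proposition~\ref{isolated prop} for $h$. The function $h^*\tau$ is the local expression of $f=\pi_\Gamma\circ\Sigma$, and it is non-constant because $f$, being a non-constant holomorphic map of compact curves, is nowhere locally constant. If $h^*\beta_2$ were constant on $\Delta$, then by Proposition~\ref{eta formula} (in the form valid for elliptic surfaces with a classifying map) the verticality $\eta_\sigma$ would vanish on $\Delta$, hence identically on the connected curve $B^0$ by real-analyticity, forcing $\sigma$ to be torsion by Proposition~\ref{horizontal prop}, a contradiction. Thus Proposition~\ref{isolated prop} applies and gives $\hat h^*\Psi(w)=|w|^{2(m-r)}\psi(w)$ near $w=0$ with $\psi$ non-vanishing and continuous, where $r$ and $m$ are the vanishing orders at $w=0$ of $h^*\tau-h^*\tau(0)$ and of $h^*\beta_2-h^*\beta_2(0)$, respectively.

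It then remains to identify $r=r_b$ and $m=m_b$. Since $\Gamma=\Gamma(k)$ with $k\ge 3$ is torsion-free it acts freely on $\mathcal H$, so $\tau$ descends to a local holomorphic coordinate on $X^0_\Gamma$ about $f(b)=\pi_\Gamma(\Sigma(b))$; in this coordinate $f|_\Delta$ is $w\mapsto h^*\tau(w)$, so the ramification index of $f$ at $b$ equals the vanishing order of $h^*\tau-h^*\tau(0)$, i.e. $r=r_b$. For $m$, the proof of Proposition~\ref{isolated prop} produces $z(w)-\beta_2(0)\tau(w)=\beta_1(0)+w^m\rho(w)$ with $\rho(0)\neq 0$, where $\beta_1(0),\beta_2(0)$ are the Betti coordinates of $\sigma$ at $b$; writing $z=\beta_1^\sigma+\beta_2^\sigma\tau$ this reads $\beta_1^\sigma(w)-\beta_1^\sigma(b)=w^m\rho(w)-(\beta_2^\sigma(w)-\beta_2^\sigma(b))\,\tau(w)$, whose right-hand side vanishes at $w=0$ to order at least $m$ because $\beta_2^\sigma-\beta_2^\sigma(b)$ vanishes to order exactly $m$ and $\tau(0)\ne 0$. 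Hence the Betti map $(\beta_1^\sigma,\beta_2^\sigma)$, minus its value at $b$, vanishes at $b$ to order exactly $m$, so $m=m_b$. This establishes $\hat\Sigma^*\Psi(w)=|w|^{2(m_b-r_b)}\psi(w)$.

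Finally, for the isolatedness assertion: by the displayed formula, for any $b_0\in B\setminus S$ and any $b'$ in a sufficiently small punctured neighbourhood of $b_0$ the value $\hat\Sigma^*\Psi(b')=|w(b')|^{2(m_{b_0}-r_{b_0})}\psi(w(b'))$ is finite and non-zero, so the formula now centred at $b'$ forces $m_{b'}=r_{b'}$. Therefore $Z:=\{b\in B\setminus S:m_b\ne r_b\}$ is discrete. Since $\{b:r_b\ge 2\}$ is the finite branch locus of $f:B\to X_\Gamma$ and $\{b\in B\setminus S:m_b\ge 2\}\subseteq Z\cup\{b:r_b\ge 2\}$, this set is discrete, i.e. its points are isolated. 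The only step here that is not purely formal is the identification $m=m_b$: one must see that the definition of Betti multiplicity — simultaneous vanishing of the partial derivatives of $\sigma^*\beta_1$ and $\sigma^*\beta_2$ — is in fact governed by the vanishing order of $\sigma^*\beta_2-\sigma^*\beta_2(b)$ alone, which is exactly what the holomorphicity relation $z=\beta_1^\sigma+\beta_2^\sigma\tau$ forces via the factorisation of $z-\beta_2(0)\tau$ used above.
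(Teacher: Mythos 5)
Your proposal is correct and follows exactly the route the paper intends: the corollary is stated as an immediate consequence of Proposition~\ref{isolated prop}, obtained by lifting $\Sigma$ locally to $\mathcal H\times\mathbb C$ and applying that proposition, and you supply precisely the missing identifications ($r=r_b$ since $\Gamma(k)$, $k\ge 3$, acts freely so $\tau$ is a local coordinate on $X^0_\Gamma$; $m=m_b$ via the relation $z=\beta_1^\sigma+\beta_2^\sigma\tau$, which shows the vanishing order of the full Betti map is governed by that of $\beta_2^\sigma$ alone) together with the non-constancy checks needed to invoke the proposition. The concluding discreteness argument via the set $\{m_b\neq r_b\}$ and the finite branch locus is also sound.
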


%\begin{proposition}
%The function $\log\Psi:\mathbb PT\rightarrow [0,+\infty]$ is locally integrable.
%\end{proposition}
%\begin{proof}
%We first consider a point $p\in\mathbb PT\setminus\{v_\tau=0\}$. By defining $a:=v_z/v_\tau$, %then $(\tau,z,a)$ are coordinates in a neighborhood of $p$. We have
%$$
%\log\Psi(\tau,z,a)=\log\left|a-\dfrac{\im z}{\im\tau}\right|^2+\log\im\tau.
%$$
%For a fixed $(\tau,z)$, the first term is locally integrable in $a$ and by Fubini's theorem it is locally integrable as a function of $(\tau,z,a)$. Thus, $\log\Psi$ is locally integrable in $\mathbb PT\setminus\{v_\tau=0\}$. If we regard $a$ as a point in $\mathbb P^1=\mathbb C\cup\{\infty\}$ and note that 
%$$
%\log\left|a-\dfrac{\im z}{\im\tau}\right|^2=-\log\dfrac{1}{|a|^2}+\log\left|1-\dfrac{\im z}%{a\im\tau}\right|^2,
%$$
%we see that $\log\Psi$ is also locally integrable at $a=\infty$.
%\end{proof}

\subsection{Betti multiplicity on the toroidal compactification}\label{toroidal}

Being invariant, $\Psi$ thus descends to a function, which we will still denoted by $\Psi$, on any $\mathbb PT(\mathcal M^0_\Gamma)$, where $\mathcal M^0_\Gamma=(\mathcal H\times\mathbb C)/(\Gamma\ltimes\mathbb Z^2)$. We will now extend it to the compactification $\mathbb PT(\mathcal M_\Gamma)$. Let $\Gamma(k)$ be a principal congruence subgroup, where $k\geq 3$. We first recall some basics of the toroidal compactification $\pi_k:\mathcal M_{\Gamma(k)}\rightarrow X_{\Gamma(k)}$, which can be found in~\cite{AMRT}. At any point on the singular fiber over the cusp $c\in X_{\Gamma(k)}$ corresponding to $i\infty$, there exists a coordinate charts $(\xi,\zeta)\in\mathbb C^2$ such that the singular fiber is defined by $\{\xi\zeta=0\}\subset\mathbb C^2$. In addition, for a point in the regular part $\mathbb C^2\setminus\{\xi\zeta=0\}$, the coordinate transformation from $(\xi,\zeta)$ to the local coordinates $(\tau,z)$ descends from $\mathcal H\times\mathbb C$ is as follows: there exists an integer $n$, unique modulo $k$, such that
$$
\xi\zeta=e^{2\pi i\tau/k}
\eqand
\xi^n\zeta^{n+1}=e^{2\pi iz}.
$$
Furthermore, there is a local coordinate $q$ near the cusp $c$ such that $q(c)=0$ and if $\pi_k:\mathcal M_{\Gamma(k)}\rightarrow X_{\Gamma(k)}$ is the projection map, then $q(\pi_k(\xi,\zeta))=\xi\zeta$. In particular, $q=e^{2\pi i\tau/k}$ in a punctured neighborhood of $c$.

On the tangent bundle $T(\mathcal M^0_{\Gamma(k)})$ near the singular fiber $\pi_k^{-1}(c)$, by computing the coordinate transformation between $(\xi,\zeta,v_\xi,v_\zeta)$ and $(\tau,z,v_\tau,v_z)$ we get
$$
\begin{pmatrix}\im\tau\\ \im z\end{pmatrix}=\dfrac{-1}{2\pi}
\begin{pmatrix}k\log|\xi\zeta|\\ \log|\xi^n\zeta^{n+1}|\end{pmatrix}
\eqand
\begin{pmatrix}v_\tau\\ v_z\end{pmatrix}=\dfrac{1}{2\pi i\xi\zeta}
\begin{pmatrix}k(\zeta v_\xi+\xi v_\zeta)\\ n(\zeta v_\xi+\xi v_\zeta)+\xi v_\zeta\end{pmatrix}.
$$

Consequently,
$$
\begin{matrix}[ccl]
\left|v_z\im\tau-v_\tau\im z\right|^2&=&\dfrac{1}{16\pi^4|\xi\zeta|^2}
\left|
(n(\zeta v_\xi+\xi v_\zeta)+\xi v_\zeta)k\log|\xi\zeta|
-k(\zeta v_\xi+\xi v_\zeta)\log|\xi^n\zeta^{n+1}|
\right|^2
\\
\\
&=&\dfrac{1}{16\pi^4|\xi\zeta|^2}
\left|
k\xi v_\zeta\log|\xi\zeta|-k(\zeta v_\xi+\xi v_\zeta)\log|\zeta|
\right|^2
\\
\\
&=&\dfrac{k^2}{16\pi^4|\xi\zeta|^2}
\left|
v_\zeta\xi \log|\xi|-v_\xi\zeta\log|\zeta|
\right|^2
\end{matrix}
$$
and
$$
|v_\tau|^2=\dfrac{k^2|\zeta v_\xi+\xi v_\zeta|^2}{4\pi^2|\xi\zeta|^2}.
$$

In a neighborhood of a point on the singular fiber over a cusp of $X_{\Gamma(k)}$, the function $\Psi$ on $\mathbb PT(\mathcal M_{\Gamma(k)})$ can be then expressed as
$$
\Psi(\xi,\zeta,[v_\xi,v_\zeta])
=\dfrac{\left|v_z\im\tau-v_\tau\im z\right|^2}{|v_\tau|^2\im\tau}
=\dfrac{\left|v_\zeta\xi\log|\xi|-v_\xi\zeta\log|\zeta|\right|^2}
{-2\pi k|\zeta v_\xi+\xi v_\zeta|^2\log|\xi\zeta|}
$$

\begin{proposition}\label{isolated prop 2}
Let $h:\Delta\rightarrow\mathcal M_{\Gamma(k)}$ be a local holomorphic curve, where $\Delta:=\{w\in\mathbb C:|w|<1\}$, such that $h(0)$ is a regular point of a singular fiber and for every $w\neq 0$, $h(w)$ lies in $\mathcal M^0_{\Gamma(k)}$. Let $\hat h:\Delta\rightarrow\mathbb PT(\mathcal M_{\Gamma(k)})$ be the tautological lifting of $h$, then either $\Psi(\hat h(w))\equiv 0$ or there exist some integer $m\geq 1$ and a non-vanishing local continuous function $\phi(w)$ such that in a neighborhood of $w=0$,
$$
\hat h^*\Psi(w)=|w|^{2(m-1)}(\log|w|)^{\pm 1}\phi(w).
$$ 
\end{proposition}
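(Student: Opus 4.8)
The plan is to compute $\hat h^*\Psi$ directly from the explicit formula
$$
\Psi(\xi,\zeta,[v_\xi,v_\zeta])=\frac{\left|v_\zeta\xi\log|\xi|-v_\xi\zeta\log|\zeta|\right|^2}{-2\pi k\,|\zeta v_\xi+\xi v_\zeta|^2\log|\xi\zeta|}
$$
established above in the toroidal boundary chart $(\xi,\zeta)$ of Section~\ref{toroidal}, in which the singular fiber is $\{\xi\zeta=0\}$. Since $h(0)$ is a \emph{regular} point of that fiber, exactly one coordinate vanishes there, say $\zeta(h(0))=0$ and $\xi(h(0))=\xi_0\neq 0$; write $h(w)=(\xi(w),\zeta(w))$. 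The hypothesis that $h(w)\in\mathcal M^0_{\Gamma(k)}$ for $w\neq 0$ forces $\zeta\not\equiv 0$, so $\zeta(w)=w^a\zeta_1(w)$ with $a\geq 1$ and $\zeta_1(0)\neq 0$. For the tautological lift one has $v_\xi=\xi'(w)$, $v_\zeta=\zeta'(w)$, and the organizing identity is $\zeta v_\xi+\xi v_\zeta=(\xi\zeta)'$. Since $\xi\zeta=w^a\rho(w)$ with $\rho(0)=\xi_0\zeta_1(0)\neq 0$, this equals $w^{a-1}\rho_1(w)$ with $\rho_1(0)=a\rho(0)\neq 0$, while $\log|\xi\zeta|=a\log|w|+\log|\rho(w)|$, which I write as $(\log|w|)\mu_1(w)$ with $\mu_1$ continuous and $\mu_1(0)=a$. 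Thus the denominator of $\Psi(\hat h(w))$ equals $-2\pi k\,|w|^{2(a-1)}(\log|w|)\,|\rho_1(w)|^2\mu_1(w)$.

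Next I would treat the numerator. Pulling $w^{a-1}$ out of $v_\zeta\xi\log|\xi|=w^{a-1}\zeta_2(w)\xi(w)\log|\xi(w)|$ (with $\zeta_2(0)=a\zeta_1(0)\neq 0$) and out of $v_\xi\zeta\log|\zeta|=w^{a-1}\bigl(w\xi'(w)\zeta_1(w)(a\log|w|+\log|\zeta_1(w)|)\bigr)$, the numerator becomes $|w|^{2(a-1)}|N(w)|^2$, where $N$ is the difference of the two bracketed factors; the common power $|w|^{2(a-1)}$ then cancels against the denominator. There are three exhaustive cases. If $|\xi_0|\neq 1$, then $\log|\xi(w)|$ is real-analytic and nonvanishing near $0$, the first bracket tends to $\zeta_2(0)\xi_0\log|\xi_0|\neq 0$ and the second to $0$ (as $w\log|w|\to 0$), so $N$ is continuous with $N(0)\neq 0$ and division gives $\hat h^*\Psi(w)=(\log|w|)^{-1}\phi(w)$, $\phi$ continuous with $\phi(0)\neq 0$: the stated form with $m=1$ and exponent $-1$. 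If $|\xi_0|=1$, write $\xi(w)=\xi_0e^{g(w)}$ with $g$ holomorphic, $g(0)=0$, so $\log|\xi(w)|=\mathrm{Re}\,g(w)$. When $g\equiv 0$ we have $v_\xi=0$ and $\log|\xi|\equiv 0$, hence $N\equiv 0$ and $\Psi(\hat h(w))\equiv 0$. When $g$ vanishes to some order $e\geq 1$, then $\xi'(w)$ vanishes to order exactly $e-1$, and I would verify that $N(w)=w^e(\log|w|)\Theta(w)$ with $\Theta$ extending continuously over $0$ and $\Theta(0)\neq 0$; the point is that the term $\zeta_2\xi\,\mathrm{Re}\,g$ contributes only $o(w^e\log|w|)$ because $|\mathrm{Re}\,g(w)|\leq|g(w)|=O(|w|^e)$ while the competing term $w\xi'(w)\zeta(w)\log|\zeta(w)|$ carries an extra factor $\log|w|\to-\infty$. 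Dividing by the denominator gives $\hat h^*\Psi(w)=|w|^{2e}(\log|w|)\phi(w)$, $\phi$ continuous with $\phi(0)\neq 0$: the stated form with $m=e+1$ and exponent $+1$. In the first and third cases $\phi(0)\neq 0$ also shows $\Psi(\hat h(\cdot))\not\equiv 0$, so the dichotomy of the statement is precisely the trichotomy above.

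The main obstacle is the case $|\xi(0)|=1$: there the real-analytic ingredient $\log|\xi(w)|=\mathrm{Re}\,g(w)$ is harmonic and vanishes along $2e$ rays through $0$, so it is \emph{not} of the form $|w|^e$ times a nonvanishing continuous function, and a naive vanishing-order count would fail. The remedy is the observation above that this term is logarithmically dominated by $w\xi'(w)\zeta(w)\log|\zeta(w)|$, so it is absorbed into the error $o(w^e\log|w|)$ and does not disturb the clean asymptotic. Everything else is elementary bookkeeping with vanishing orders, once one has recorded the identity $\zeta v_\xi+\xi v_\zeta=(\xi\zeta)'$ and the normalization of the chart so that $\zeta$ is the coordinate vanishing at $h(0)$.
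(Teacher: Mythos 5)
Your proof is correct and follows essentially the same route as the paper's: a direct expansion of the numerator and denominator of $\Psi\circ\hat h$ in the toroidal chart $(\xi,\zeta)$, split according to whether $|\xi_0|=1$, resting on the same two key points (boundedness of $\log|\xi(w)|/w^{e}$, which you phrase as $|\mathrm{Re}\,g|\le |g|=O(|w|^{e})$, and the logarithmic domination of that term by $\xi'\zeta\log|\zeta|$); your identity $\zeta v_\xi+\xi v_\zeta=(\xi\zeta)'$ is a tidier way to organize the denominator but not a different method. One substantive remark: in the case $|\xi_0|=1$ you obtain the exponent $|w|^{2e}$ with $e=\mathrm{ord}_0(\xi-\xi_0)$, i.e.\ $m=e+1$, whereas the paper's final display records $|w|^{2(j-1)}$ with $j=e$; your value is the one actually produced by the paper's own intermediate estimates (numerator $|w|^{2(j+\ell-1)}(\log|w|)^{2}$ divided by denominator $|w|^{2(\ell-1)}\log|w|$ yields $|w|^{2j}\log|w|$, as a test case such as $\xi=\xi_0+w$, $\zeta=w$ confirms), so the proposition as literally stated (existence of some $m\ge 1$) is unaffected, but the calibration of this $m$ against Definition~\ref{betti at cusps} in Corollary~\ref{isolated cor 2} deserves a second look.
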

\begin{proof}
Since $h(0)$ lies on a singular fiber, we can assume that the image of $h$ lies in a coordinate chart $(\xi,\zeta)\in\mathbb C^2$ for the toroidal compactification described above and write $h(w)=(\xi(w),\zeta(w))$, $w\in\Delta$. In the coordinate chart $(\xi,\zeta)$, the singular fiber is defined by the equation $\xi\zeta=0$. Since $h(0)$ is a regular point of the singular point and $\Psi$ is symmetric in $\xi$, $\zeta$, if we let $h(0)=(\xi_0,\zeta_0)$, we may assume that $\xi_0\neq 0$ and $\zeta_0=0$. In addition, $\zeta(w)\not\equiv 0$ as the image of $h$ only intersects the singular fiber at one point. 

We first handle the case where $\xi(w)\not\equiv\xi_0$. Then, there exist positive integers $j,\ell$ such that 
$$
\xi(w)=\xi_0+w^j\tilde\xi(w)
\eqand
\zeta(w)=w^\ell\tilde\zeta(w).
$$
for some holomorphic functions $\tilde\xi$, $\tilde\zeta$ such that $\tilde\xi(0)\neq 0$, 
$\tilde\zeta(0)\neq 0$.  In particular, we also have
$$
\xi'(w)=w^{j-1}\xi^\sharp(w)
\eqand
\zeta'(w)=w^{\ell-1}\zeta^\sharp(w).
$$
for some holomorphic functions $\xi^\sharp$, $\zeta^\sharp$ such that $\xi^\sharp(0)\neq 0$, 
$\zeta^\sharp(0)\neq 0$.
The tautological lifting of $h$ can be written as $\hat h(w)=(\xi(w),\zeta(w),[\xi'(w),\zeta'(w)])$. 
Hence, for $w\neq 0$,
\begin{equation}\label{nf(w)}
\Psi(\hat h(w))=\dfrac{\left|\zeta'\xi\log|\xi|-\xi'\zeta\log|\zeta|\right|^2}
{-2\pi k|\zeta \xi'+\xi \zeta'|^2\log|\xi\zeta|}.
\end{equation}

For the numerator,
$$
\begin{matrix}[ccl]
\left|\zeta'\xi\log|\xi|-\xi'\zeta\log|\zeta|\right|^2
&=&\left|w^{\ell-1}\zeta^\sharp\xi\log|\xi|-w^{j+\ell-1}\xi^\sharp\tilde\zeta\log|w^\ell\tilde\zeta|\right|^2\\
\\
&=&\left|w\right|^{2(\ell-1)}\left|\zeta^\sharp\xi\log|\xi|-w^j\xi^\sharp\tilde\zeta\log|w^\ell\tilde\zeta|\right|^2.
\end{matrix}
$$
If $|\xi_0|\neq 1$, we see that for $w$ close to $0$, 
$$
\left|\zeta'\xi\log|\xi|-\xi'\zeta\log|\zeta|\right|^2=
|w|^{2(\ell-1)}\psi_1(w),
$$
where $\psi_1$ is a non-vanishing local continuous function.
Suppose now $|\xi_0|=1$ and write
$$
\left|\zeta'\xi\log|\xi|-\xi'\zeta\log|\zeta|\right|^2=
\left|w\right|^{2(j+\ell-1)}\left|\zeta^\sharp\xi\dfrac{\log|\xi|}{w^j}-\xi^\sharp\tilde\zeta\log|w^\ell\tilde\zeta|\right|^2.
$$
When $w\rightarrow 0$, we have 
$$
\left|\dfrac{\log|\xi|}{w^j}\right|=\left|\dfrac{\log|\xi_0+w^j\tilde\xi|}{w^j}\right|
=\left|\dfrac{\log|1+\overline{\xi_0}w^j\tilde\xi|}{w^j}\right|
\leq\left|\dfrac{\log(1+\overline{\xi_0}w^j\tilde\xi)}{w^j}\right|,
$$
which is bounded and it follows that for $w$ close to $0$,
$$
\left|\zeta'\xi\log|\xi|-\xi'\zeta\log|\zeta|\right|^2=
|w|^{2(j+\ell-1)}(\log|w|)^2\psi_2(w)
$$
for some non-vanishing local continuous function $\psi_2$. Now let us handle the case where $\xi(w)\equiv\xi_0$. The numerator of $\Psi(\hat h(w))$ in Eq.(\ref{nf(w)}) in this case will be either constantly zero (when $|\xi_0|=1$) or equal to $a|w|^{2(\ell-1)}|\zeta^\sharp|^2$, for some $a>0$ (when $|\xi_0|\neq 1$).

On the other hand, it is easy to see that when $w$ is close to $0$, the denominator of 
$\Psi(\hat h(w))$ in Eq.(\ref{nf(w)}) is equal to $|w|^{2(\ell-1)}\log|w|\psi_3(w)$ for some non-vanishing local continuous function $\psi_3$. Combining with the numerator, we conclude that if $\Psi(\hat h(w))\not\equiv 0$, then for in a neighborhood of $w=0$, we have
$$
\Psi(\hat h(w))=\left\{
\begin{matrix}[crl]
|w|^{2(j-1)}\log|w|\varphi_2(w)&\textrm{when}&|\xi_0|=1;\\
\\
\dfrac{\varphi_1(w)}{\log|w|}&\textrm{when}&|\xi_0|\neq 1,
\end{matrix}
\right.
$$
where $\varphi_1,\varphi_2$ are non-vanishing local continuous functions.
\end{proof}

The Betti multiplicities of a section are originally only defined on the points of good reduction, i.e. those points over which the fibers are regular. The proof of the previous proposition has suggested the following way to define the Betti multiplicity of a section at a point of bad reduction, in the case where the elliptic surface is given by some classifying map into a modular curve $X_{\Gamma(k)}$. 

Let $\pi:\mathcal E\rightarrow B$ be an elliptic surface given by some classifying map from $B$ to $X_{\Gamma(k)}$ and $\sigma:B\rightarrow\mathcal E$ be a non-torsion holomorphic section and $\Sigma:B\rightarrow\mathcal M_{\Gamma(k)}$ be the associated holomorphic map such that $\pi_k\circ\Sigma:B\rightarrow X_{\Gamma(k)}$ is the classifying map.
Let $w\in\Delta\subset B$ be a local coordinate chart such that $w=0$ is the only point of bad reduction in $\Delta$. Choose a coordinate chart $(\xi,\zeta)\in\mathbb C^2$ near $\Sigma(0)$ provided by the toroidal compactification as described at the beginning of this section. Write $\Sigma(w)=(\xi(w),\zeta(w))$. Recall that the singular fiber containing $\Sigma(0)$ can be defined by $\xi\zeta=0$ in the coordinate chart and we may assume that $\zeta(0)=0$. Since $\sigma$ is a section, i.e. $\pi\circ\sigma(w)=w$, it follows that $\Sigma(0)$ is a regular point on the singular fiber containing $\Sigma(0)$ and thus we have $\xi(0)\neq 0$. Moreover, as $\sigma$ is a non-torsion section, the proof of Proposition~\ref{isolated prop 2} shows that if $|\xi(0)|=1$, then $\xi(w)$ is not constant. 
%Recall also that there is a local coordinate $q$ near the cusp $c:=\pi_k(\Sigma(0))$ such that  $q(\pi_k(\xi,\zeta))=\xi\zeta$. In particular, we see that the ramification index of the classifying map $\pi_k\circ\Sigma:B\rightarrow\overline{X^0_{\Gamma(k)}}$ is just the vanishing order of $\zeta(w)$ at $w=0$.

\begin{definition}\label{betti at cusps}
If $|\xi(0)|=1$, then we define the Betti multiplicity $m_c$ of $\sigma$ at $c$ to be 
the vanishing order of $\xi(w)-\xi(0)$ at $w=0$. Otherwise, we define $m_c=1$.
\end{definition}

\noindent\textbf{Remarks.} $(i)$ One can check that the number $m_c$ does not depend on the choice of the integer $n$ that appears in the compactifying coordinate charts. $(ii)$ A local curve on $\mathcal M_{\Gamma(k)}$ defined by $\xi=\xi_0$ with $|\xi_0|=1$ is actually the compactification of a local curve $\gamma_{a,b}$ on $\mathcal M^0_{\Gamma(k)}$ defined by $z=a\tau+b$ for some $a,b\in\mathbb R$, where $(\tau,z)$ are the coordinates descending from $\mathcal H\times\mathbb C$. Thus, if $\Sigma$ is unramified at $w=0$, the vanishing order of $\xi(w)-\xi(0)$ at $w=0$ is just the order of contact of $\Sigma$ and the compactification $\overline{\gamma_{a,b}}$ at $c$,
which reconciles with the definition of the Betti multiplicity of $\sigma$ at a point of good reduction.

%Now if $\Sigma$ is ramified at $w=0$ with ramification index $r_c$ (which is equal to $\mathrm{ord}(\zeta(w))$ as above), the pullback curve $\Sigma^{-1}(\overline{\gamma_{a,b}})\subset\mathcal E$ locally is a ramified cover over $\gamma_{a,b}$ of degree $r_c$.

Now by Proposition~\ref{isolated prop 2} and the definition above, we have
\begin{corollary}\label{isolated cor 2}
Let $\sigma:B\rightarrow\mathcal E$ be a non-torsion section of an elliptic surface $\mathcal E$ given by some classifying map from $B$ to a modular curve $X_{\Gamma(k)}$. Let $\hat\Sigma:B\rightarrow\mathbb PT(\mathcal M_{\Gamma(k)})$ be the tautological lifting of the induced map $\Sigma:B\rightarrow\mathcal M_{\Gamma(k)}$. Then at a point of bad reduction $b\in B$ and for a coordinate chart $w\in\Delta$ near $b$ with $w(b)=0$, 
$$
\hat\Sigma^*\Psi(w)=|w|^{2(m_b-1)}(\log|w|)^{\pm 1}\phi(w),
$$
where $m_b$ is the Betti multiplicity of $\sigma$ at $b$ and $\phi(w)$ 
is a non-vanishing local continuous function.
\end{corollary}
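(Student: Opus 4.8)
The plan is to deduce the corollary directly from Proposition~\ref{isolated prop 2} together with Definition~\ref{betti at cusps}: all the analytic content is already contained in Proposition~\ref{isolated prop 2}, so what remains is to verify its hypotheses for the curve $\Sigma$ near a point of bad reduction and then to match the exponent it produces with the Betti multiplicity $m_b$ of Definition~\ref{betti at cusps}.

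First I would localise at a point $b\in B$ of bad reduction. Pick a coordinate chart $w\in\Delta$ centred at $b$, shrunk so that $w=0$ is the only point of bad reduction in $\Delta$, and set $h:=\Sigma|_\Delta:\Delta\to\mathcal M_{\Gamma(k)}$, so that $\hat\Sigma|_\Delta=\hat h$. Then I would check that $h$ satisfies the hypotheses of Proposition~\ref{isolated prop 2}: for $w\neq0$ one has $h(w)\in\mathcal M^0_{\Gamma(k)}$ by the choice of $\Delta$, and $h(0)=\Sigma(b)$ lies on the regular locus of the singular fibre over $f(b)$ because $\sigma$ is a section. This last point is exactly the observation recorded just before Definition~\ref{betti at cusps}: in a toroidal chart $(\xi,\zeta)$ with $\{\xi\zeta=0\}$ the singular fibre and $\zeta(0)=0$, the relation $\pi_k\circ\Sigma=f$ forces $\xi(0)\neq0$.

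Next I would apply Proposition~\ref{isolated prop 2}, which asserts that either $\hat h^*\Psi\equiv0$ or $\hat h^*\Psi(w)=|w|^{2(m-1)}(\log|w|)^{\pm1}\phi(w)$ for some integer $m\geq1$ and some non-vanishing local continuous function $\phi$. The first alternative must be ruled out, and this is where the non-torsion hypothesis genuinely enters. Inspecting the proof of Proposition~\ref{isolated prop 2}, $\hat h^*\Psi\equiv0$ can occur only when $|\xi(0)|=1$ and $\xi(w)\equiv\xi(0)$; but this case was already excluded before Definition~\ref{betti at cusps}, where it is noted that a non-torsion $\sigma$ forces $\xi(w)$ to be non-constant whenever $|\xi(0)|=1$. (Alternatively: if $\xi(w)\equiv\xi(0)$ with $|\xi(0)|=1$, then by Remark~$(ii)$ following Definition~\ref{betti at cusps} the image of $h$ lies in the toroidal closure of a curve $z=a\tau+b$ with $a,b\in\mathbb R$, along which the Betti coordinates are constant, so $\eta_\sigma$ vanishes on a neighbourhood of $b$, hence identically on $B^0$ by real-analyticity, forcing $\sigma$ to be torsion by Proposition~\ref{horizontal prop} --- a contradiction.)

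Finally I would identify the integer $m$ with $m_b$ by reading the exponents off the proof of Proposition~\ref{isolated prop 2}: when $|\xi(0)|=1$ that proof gives $\hat h^*\Psi(w)=|w|^{2(j-1)}\log|w|\,\varphi_2(w)$, where $j$ is the vanishing order of $\xi(w)-\xi(0)$ at $w=0$, and $j=m_b$ by Definition~\ref{betti at cusps}; when $|\xi(0)|\neq1$ it gives $\hat h^*\Psi(w)=\varphi_1(w)/\log|w|=|w|^{2(1-1)}(\log|w|)^{-1}\varphi_1(w)$, in accordance with $m_b=1$. In both cases $\hat\Sigma^*\Psi(w)=|w|^{2(m_b-1)}(\log|w|)^{\pm1}\phi(w)$ with $\phi$ non-vanishing and continuous, as claimed. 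I expect the only step that is not pure bookkeeping to be the exclusion of the degenerate alternative $\hat h^*\Psi\equiv0$, since that is the single place where the non-torsion hypothesis is actually used; verifying the hypotheses of Proposition~\ref{isolated prop 2} and matching the exponent with Definition~\ref{betti at cusps} are then routine.
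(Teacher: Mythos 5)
Your argument is the paper's own proof: the paper deduces the corollary in one line from Proposition~\ref{isolated prop 2} and Definition~\ref{betti at cusps}, and you have filled in exactly the right details --- in particular, the exclusion of the alternative $\hat h^*\Psi\equiv 0$ via the non-torsion hypothesis is precisely the observation the paper records in the paragraph preceding Definition~\ref{betti at cusps}. The one step I would ask you not to take on faith is the exponent you quote from the last display of the proof of Proposition~\ref{isolated prop 2}: there the numerator is $|w|^{2(j+\ell-1)}(\log|w|)^2\psi_2(w)$ and the denominator is $|w|^{2(\ell-1)}\log|w|\,\psi_3(w)$, whose quotient has $|w|$-exponent $2j$ rather than the displayed $2(j-1)$ (a direct computation of $\Psi=4(\im\tau)^3|\partial_w\beta_2|^2/|\tau'(w)|^2$ near the cusp, using $\beta_2=\tfrac{n+1}{k}-\tfrac{\log|\xi|}{k\log|w|}$, likewise gives $\hat h^*\Psi\asymp |w|^{2j}\,|\log|w||$ when $|\xi_0|=1$). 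Since Definition~\ref{betti at cusps} sets $m_b=j$, reconciling this with the asserted $|w|^{2(m_b-1)}$ requires either correcting that display (in which case the corollary should read $|w|^{2m_b}\log|w|\,\phi(w)$) or taking the multiplicity at such a cusp to be $j+1$; your write-up quotes the paper's bookkeeping verbatim and so inherits this apparent off-by-one, which is the single substantive point an independent verification of your proof would need to settle.
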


We then have the finiteness for the set of points with higher Betti multiplicity, which has been obtained by Corvaja-Demeio-Masser-Zannier~\cite{CDMZ}.

\begin{corollary}[c.f.~\cite{CDMZ}]
Let $\sigma$ be a non-torision holomorphic section of an elliptic surface $\mathcal E\rightarrow B$, then there are finitely many points on $B$ at which the Betti multiplicity is at least 2.
\end{corollary}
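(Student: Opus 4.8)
The plan is to reduce to the case of an elliptic surface $\mathcal E\to B$ carrying a classifying map, where the finiteness then follows at once from the local normal forms for $\hat\Sigma^{*}\Psi$ recorded in Corollaries~\ref{isolated cor 1} and~\ref{isolated cor 2}, together with the compactness of $B$. So the argument has two parts, a reduction and the finiteness in the presence of a classifying map.

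\emph{Reduction.} By the reduction procedure of Section~\ref{reduction section} one passes to a finite morphism $\varphi:B'\to B$ from a smooth projective curve $B'$ such that the pulled-back elliptic surface $\mathcal E'$ (replaced if necessary by its relatively minimal model) admits a classifying map into some $X_{\Gamma(k)}$, $k\ge3$, the pulled-back section $\sigma'$ remaining of infinite order; here one uses that $\mathcal E\to B$ may be assumed non-isotrivial, the isotrivial case being elementary after a finite base change trivialising the family. Since the Betti coordinates are branches of local real-analytic functions on the base and ${\sigma'}^{*}\beta_i=\varphi^{*}(\sigma^{*}\beta_i)$, the Betti multiplicity of $\sigma'$ at $b'$ equals $r'\cdot m_{\varphi(b')}$, where $r'$ is the ramification index of $\varphi$ at $b'$ and $m_{\varphi(b')}$ is the Betti multiplicity of $\sigma$ at $\varphi(b')$; in particular $b'$ has Betti multiplicity $\ge2$ whenever $\varphi(b')$ does. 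As $\varphi$ is surjective, it suffices to prove the statement for $\mathcal E'$.

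\emph{The case with a classifying map.} Let $f:B\to X_{\Gamma(k)}$ be the classifying map, $S=f^{-1}(X_{\Gamma(k)}\setminus X^{0}_{\Gamma(k)})$, let $R\subset B$ be the ramification locus of $f$, and let $\hat\Sigma:B\to\mathbb P T(\mathcal M_{\Gamma(k)})$ be the tautological lifting of the induced map $\Sigma:B\to\mathcal M_{\Gamma(k)}$; the sets $R$ and $S$ are finite since $f$ is a non-constant morphism of compact curves. Put $Z:=\{b\in B\setminus S:\hat\Sigma^{*}\Psi(b)=0\}$. By Corollary~\ref{isolated cor 1}, each $b\in B\setminus S$ has a punctured coordinate disc on which $\hat\Sigma^{*}\Psi(w)=|w|^{2(m_b-r_b)}\psi(w)$ with $\psi$ non-vanishing, hence on which $\hat\Sigma^{*}\Psi$ is nowhere zero; by Corollary~\ref{isolated cor 2}, each $b\in S$ has a punctured disc on which $\hat\Sigma^{*}\Psi(w)=|w|^{2(m_b-1)}(\log|w|)^{\pm1}\phi(w)$ with $\phi$ non-vanishing, hence again nowhere zero there. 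Thus $Z$ is discrete and closed in $B$, and so finite by compactness of $B$. Finally, at any $b\in B\setminus(R\cup S)$ one has $r_b=1$, so Corollary~\ref{isolated cor 1} gives $\hat\Sigma^{*}\Psi(w)=|w|^{2(m_b-1)}\psi(w)$, whence $m_b\ge2$ if and only if $b\in Z$. Therefore the set of points of Betti multiplicity $\ge2$ is contained in $Z\cup R\cup S$ — the multiplicities at the points of $S$ being those of Definition~\ref{betti at cusps} — and this set is finite.

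The main obstacle is the reduction step: one must arrange the finite cover $\varphi:B'\to B$ so that the base-changed family genuinely acquires a level-$k$ structure (hence a classifying map into some $X_{\Gamma(k)}$ with $k\ge3$), and one must check that replacing $\mathcal E'$ by its relatively minimal model keeps $\sigma'$ a section of infinite order and leaves its Betti multiplicities at points of good reduction unchanged, so that the identity $m_{b'}=r'\cdot m_{\varphi(b')}$ — and with it the passage from finiteness on $B'$ to finiteness on $B$ — remains valid. Granting this, the finiteness in the classifying-map case is an immediate consequence of the normal forms already proved and the compactness of $B$.
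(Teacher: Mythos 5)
Your argument is correct and is essentially the paper's own proof: the paper likewise settles the classifying-map case by combining Corollaries~\ref{isolated cor 1} and~\ref{isolated cor 2} (the local normal forms for $\hat\Sigma^*\Psi$, hence discreteness of its zero set, plus compactness of $B$) and handles the general case via the finite-cover reduction of Section~\ref{reduction section}. Your extra remarks on the isotrivial case and on the behaviour of Betti multiplicities under the cover (which the paper arranges to be unramified over $B^0$, so that the multiplicities at points of good reduction are simply preserved) only make explicit what the paper leaves implicit.
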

\begin{proof}
When $\mathcal E$ is an elliptic surface given by a classifying map from $B$ to a modular curve $X_{\Gamma(k)}$, $k\geq 3$, the corollary follows from Corollary~\ref{isolated cor 1} and Corollary~\ref{isolated cor 2}. The general case can then be deduced by a reduction argument similar to that in Section~\ref{reduction section}.
\end{proof}

\subsection{Counting the total Betti multiplicity for elliptic surfaces with classifying maps}\label{counting section}

In what follows, we will fix an elliptic modular surface $\pi_{X}:\mathcal M:=\mathcal M_{\Gamma(k)}\rightarrow X:=X_{\Gamma(k)}$, where $k\geq 3$.

Let $\pi:\mathcal E\rightarrow B$ be an elliptic surface over a complex projective curve $B$, given by a classifying map $f:B\rightarrow X$. Let $\sigma$ be a non-torsion holomorphic section of $\mathcal E$ and $\Sigma:B\rightarrow\mathcal M$ be the induced holomorphic map such that $\pi_{X}\circ\Sigma=f$. Denote by $\hat \Sigma:B\rightarrow\mathbb PT(\mathcal M)$ the tautological lifting of $\Sigma$. 

Let $S:=f^{-1}(X\setminus X^0)\subset B$ be the set of points of bad reduction on $B$ and $R\subset B$ be the ramification locus of $f$. Let also $\mathfrak B_\sigma\subset B\setminus S$ be the finite subset of the points of good reduction consisting of points at which the Betti multiplicity of $\sigma$ is at least 2. Then $i\partial\bar\partial\log\hat \Sigma^*\Psi$ is a real-analytic $(1,1)$-form on $B\setminus (\mathfrak B_\sigma\cup R\cup S)$ by Corollary~\ref{isolated cor 1} and~\ref{isolated cor 2}. As in Section~\ref{modular section}, we are going to apply Stokes' theorem with $i\partial\bar\partial\log\hat \Sigma^*\Psi$. The limits of the boundary integrals involved are as follows.

\begin{lemma}\label{loop integral}
For every $b\in\mathfrak B_\sigma\cup R\cup S$, there exists a local coordinate chart on  $w\in\Delta$ on $B$ with $w(b)=0$, such that
$$
\lim_{\epsilon\rightarrow 0}\int_{\partial\Delta_\epsilon}\dfrac{i}{2\pi}\bar\partial\log\hat \Sigma^*\Psi
=\left\{
\begin{matrix}
m_b-r_b&\textrm{when}&b\not\in S;\\
m_b-1&\textrm{when}&b\in S,
\end{matrix}
\right.
$$
where $m_b$, $r_b$ are the Betti multiplicity and the ramification index at $b$ respectively, and $\partial\Delta_\epsilon=\{w\in\Delta:|w|=\epsilon\}$ is given the anti-clockwise orientation.
\end{lemma}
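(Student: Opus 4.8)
The plan is to compute the boundary integral $\int_{\partial\Delta_\epsilon}\frac{i}{2\pi}\bar\partial\log\hat\Sigma^*\Psi$ using the explicit local descriptions of $\hat\Sigma^*\Psi$ supplied by Corollary~\ref{isolated cor 1} (for $b\notin S$) and Corollary~\ref{isolated cor 2} (for $b\in S$), and to show that the only contribution in the limit comes from the pure power of $|w|$, all other factors contributing zero. First I would treat the case $b\notin S$: here $\hat\Sigma^*\Psi(w)=|w|^{2(m_b-r_b)}\psi(w)$ with $\psi$ non-vanishing continuous, so $\log\hat\Sigma^*\Psi = (m_b-r_b)\log|w|^2 + \log\psi(w) = 2(m_b-r_b)\log|w| + \log\psi$. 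Then $\bar\partial\log\hat\Sigma^*\Psi = (m_b-r_b)\frac{d\bar w}{\bar w} + \bar\partial\log\psi$, and $\frac{i}{2\pi}\int_{\partial\Delta_\epsilon}(m_b-r_b)\frac{d\bar w}{\bar w} = m_b - r_b$ by a direct residue-type computation (orientation anti-clockwise, $w = \epsilon e^{i\theta}$). So the substance is showing $\int_{\partial\Delta_\epsilon}\bar\partial\log\psi \to 0$ as $\epsilon\to 0$.

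The delicate point is that $\psi$ is only asserted to be continuous and non-vanishing, not smooth, so $\bar\partial\log\psi$ need not be a nice $(0,1)$-form in the naive sense; one must be careful about what $\bar\partial\log\hat\Sigma^*\Psi$ even means and why Stokes applies. The cleanest route is to go back one step: $\hat\Sigma^*\Psi = \hat h^*\Psi$ where $h$ is the relevant holomorphic curve, and use the factorization obtained inside the \emph{proofs} of Propositions~\ref{isolated prop} and~\ref{isolated prop 2}, where the non-vanishing factor is exhibited as a genuine modulus-squared $|G(w,\bar w)|^2$ of a continuous (indeed, piecewise real-analytic) function, or as an explicit quotient of such. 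For such a factor $\log\psi$ is the real part of a locally defined function whose $\bar\partial$ is $L^1$ near $0$ and continuous away from $0$; the circle integral $\int_{\partial\Delta_\epsilon}\bar\partial\log\psi$ is then bounded by $2\pi\epsilon \cdot \sup_{|w|=\epsilon}|\bar\partial\log\psi|$, and since $\bar\partial\log\psi$ stays bounded (the leading coefficients $\rho^\sharp(0)$, $\rho(0)$, $\tau'(0)$ etc.\ are nonzero, so $\psi$ is real-analytic and non-vanishing in a punctured neighborhood and extends continuously across $0$), this length $\times$ sup bound gives $O(\epsilon)\to 0$. I would phrase this as: choose the coordinate $w$ so that the factorizations of the cited corollaries hold; write $\log\hat\Sigma^*\Psi$ as $2(m_b - r_b)\log|w| + \Lambda(w)$ with $\Lambda$ real-analytic and bounded on a punctured disc and continuous at $0$; then $\left|\int_{\partial\Delta_\epsilon}\frac{i}{2\pi}\bar\partial\Lambda\right| \le \frac{\epsilon}{2\pi}\cdot 2\pi \sup_{|w|=\epsilon}|\partial\Lambda/\partial\bar w| \to 0$.

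For $b\in S$ the argument is the same with $\hat\Sigma^*\Psi(w) = |w|^{2(m_b-1)}(\log|w|)^{\pm 1}\phi(w)$: now $\log\hat\Sigma^*\Psi = 2(m_b-1)\log|w| \pm \log|\log|w|| + \log\phi(w)$, so $\bar\partial\log\hat\Sigma^*\Psi = (m_b-1)\frac{d\bar w}{\bar w} \pm \bar\partial\log|\log|w|| + \bar\partial\log\phi$. The first term integrates to $m_b - 1$. For the middle term, $\log|\log|w||$ depends on $|w|$ alone, so $\bar\partial\log|\log|w|| = \frac{1}{2}\,\frac{1}{|\log|w||}\cdot\frac{1}{\log|w|}\cdot\frac{d\bar w}{\bar w}$ up to sign, and on $\partial\Delta_\epsilon$ this has modulus $\asymp \frac{1}{\epsilon|\log\epsilon|^2}$, so its contour integral is $\asymp \frac{1}{|\log\epsilon|^2}\to 0$; alternatively one invokes the estimate $\left|\int_{\partial\Delta_\epsilon}\bar\partial\eta/\eta\right| \le \int_{\partial\Delta_\epsilon}\frac{|dq|}{2|q\log|q||}$ already used in Section~\ref{modular section} to dispose of exactly such log-type contributions at cusps. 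The last term $\bar\partial\log\phi$ vanishes in the limit by the same length $\times$ sup bound as before, since $\phi$ is continuous non-vanishing and real-analytic on the punctured disc with bounded logarithmic derivative. Combining the three pieces gives the limit $m_b - 1$. The main obstacle I anticipate is purely bookkeeping: verifying that in each case covered by the cited corollaries the non-vanishing factor genuinely has a bounded $\bar\partial$-logarithmic derivative up to the puncture (the mild subtlety being the $|\xi_0| = 1$ sub-case of Proposition~\ref{isolated prop 2}, where one uses $|\log|\xi_0 + w^j\tilde\xi|/w^j| \le |\log(1 + \overline{\xi_0}w^j\tilde\xi)/w^j|$ bounded, exactly as in that proof), so that the elementary ``short curve, bounded integrand'' estimate closes the argument.
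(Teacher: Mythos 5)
Your proposal is correct and follows essentially the same route as the paper: split $\log\hat\Sigma^*\Psi$ into $2(m_b-r_b)\log|w|$ (resp.\ $2(m_b-1)\log|w|\pm\log|\log|w||$ at a cusp) plus the logarithm of the non-vanishing factor, extract the residue from the first term, and kill the remaining boundary integrals by a length-times-sup estimate applied to the explicit factorizations from the proofs of Propositions~\ref{isolated prop} and~\ref{isolated prop 2}. Two harmless imprecisions: your formula for $\bar\partial\log|\log|w||$ carries a spurious extra factor $1/|\log|w||$ (the correct bound for that contour integral is $O(1/|\log\epsilon|)$, still zero in the limit), and in the $b\in S$ case the logarithmic derivative of the non-vanishing factor need not be bounded --- the paper notes that $\bar\partial\phi^\flat$ can have a singularity of order $\log|w|$ when $j=1$ --- but the same estimate still closes since $\epsilon\log(1/\epsilon)\rightarrow 0$.
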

\begin{proof}
If $b\in(\mathfrak B_\sigma\cup R)\setminus S$, by Corollary~\ref{isolated cor 1}, we can choose the coordinate chart $w$ such that $\hat\Sigma^*\Psi(w)=|w|^{2(m_b-r_b)}\psi(w)$ for some non-vanishing continuous function on $\Delta$. Thus, the situation is similar to that in Eq.(\ref{loop eq}) of Section~\ref{modular section} and it suffices to check that
$$
\lim_{\epsilon\rightarrow 0}\int_{\partial\Delta_\epsilon}\bar\partial\log\psi=
\lim_{\epsilon\rightarrow 0}\int_{\partial\Delta_\epsilon}\dfrac{\bar\partial\psi}{\psi}=0.
$$
By the proof of Proposition~\ref{isolated prop},
$$
\psi(w)=\dfrac{\left|\lambda(w)-\dfrac{\overline w^m}{w^{m-1}}\overline{\rho(w)}\right|^2}{\alpha(w)}
$$
for some holomorphic function $\rho$ and real-analytic functions $\lambda$, $\alpha$ on $\Delta$ such that $\rho$, $\lambda$, $\alpha$ are non-vanishing in a neighborhood of $w=0$.
Therefore, we just need to verify that $\displaystyle\lim_{w\rightarrow 0}w\bar\partial\psi^\flat(w)=0$, where
$$
\psi^\flat(w):=\left|\lambda(w)-\dfrac{\overline w^m}{w^{m-1}}\overline{\rho(w)}\right|^2
$$
Since $\bar\partial\psi^\flat$ is easily seen to be bounded in a punctured neighborhood of $w=0$, the desired limit is indeed zero.

If $b\in S$, by Corollary~\ref{isolated cor 2}, we can choose a coordinate chart $w$ such that 
$\hat\Sigma^*\Psi(w)=|w|^{2(m_b-1)}(\log|w|)^{\pm 1}\phi(w)$ for some non-vanishing continuous function $\phi(w)$. As above, it suffices to verify that
$$
\lim_{\epsilon\rightarrow 0}\int_{\partial\Delta_\epsilon}\bar\partial\log\log|w|=
\lim_{\epsilon\rightarrow 0}\int_{\partial\Delta_\epsilon}\dfrac{\bar\partial\log|w|}{\log|w|}=0
$$
and
$$
\lim_{\epsilon\rightarrow 0}\int_{\partial\Delta_\epsilon}\bar\partial\log\phi=\lim_{\epsilon\rightarrow 0}\int_{\partial\Delta_\epsilon}\dfrac{\bar\partial\phi}{\phi}=0.
$$
The first limit is trivial. For the second one, from the proof of Proposition~\ref{isolated prop 2}, there are two possibilities for $\phi$. We will verify for one of them and the other is similar and therefore the detail will be omitted. Using the notations in the proof of Propostion~\ref{isolated prop 2}, in the case where $|\xi_0|\neq 1$, we have
$$
\phi(w)=\dfrac{\left|\nu(w)-w^jh_1(w)\log|w|\right|^2}{|h_2(w)|^2}
$$
for some integer $j\geq 1$, holomorphic functions $h_1,h_2$ and complex-valued real-analytic functions $\nu$ on $\Delta$ such that $h_1,h_2,\nu$ are non-vanishing in a neighborhood of $w=0$. Consequently, it suffices to check that $\displaystyle\lim_{w\rightarrow 0}w\bar\partial\phi^\flat(w)=0$, where
$$
\phi^\flat(w)=\left|\nu(w)-w^jh_1(w)\log|w|\right|^2.
$$
Since the worst possible singularity in $\bar\partial\phi^\flat$ is of order $\log|w|$ (when $j=1$), so the desired limit is also zero.
\end{proof}

We are now in the position to prove our integral formula for the Betti-multiplicities.

\begin{proof}[Proof of Theorem~\ref{integral formula}]
As in Section~\ref{modular section}, we will apply Stokes' theorem with $i\partial\bar\partial\log\hat \Sigma^*\Psi$ on
$\displaystyle B_\epsilon:=B\setminus\bigcup_{b\in \mathfrak B_\sigma\cup R\cup S}\overline{\Delta_\epsilon(b)}$, where $\Delta_\epsilon(b)$ is a disk of radius $\epsilon$ in a local coordinate chart $w$ near $b$ such that $w(b)=0$. By Lemma~\ref{loop integral},
\begin{eqnarray*}
-\lim_{\epsilon\rightarrow 0}\int_{B_\epsilon}\dfrac{i}{2\pi}\partial\bar\partial\log\hat \Sigma^*\Psi&=&\sum_{b\in(\mathfrak B_\sigma\cup R)\setminus S}(m_b-r_b)+\sum_{b\in S}(m_b-1)\\
&=&\sum_{b\in\mathfrak B_\sigma}(m_b-r_b)+\sum_{b\in R\setminus (S\cup\mathfrak B_\sigma)}(1-r_b)+\sum_{b\in S}(m_b-1)\\
&=&\sum_{b\in\mathfrak B_\sigma}(m_b-1)+\sum_{b\in R\setminus S}(1-r_b)+\sum_{b\in S}(m_b-1)\\
&=&\sum_{b\in B}(m_b-1)-\sum_{b\in B\setminus S}(r_b-1).
\end{eqnarray*}

On the other hand, since the restriction of the classifying map $f:B_\epsilon\rightarrow f(B_\epsilon)\subset X^0$ is an unramified cover, by pulling back the bundles, metrics and connections, etc. from $X^0$ to $B_\epsilon$, we deduce from Proposition~\ref{equivalence} and Eq.(\ref{iddbarlogeta}), Eq.(\ref{integrable prop}) of Section~\ref{modular section} that
$$
\lim_{\epsilon\rightarrow 0}\int_{B_\epsilon}\dfrac{i}{2\pi}\partial\bar\partial\log\hat\Sigma^*\psi=-\lim_{\epsilon\rightarrow 0}\int_{B_\epsilon}\dfrac{1}{2\pi}f^*\omega=-\int_{B\setminus S}\dfrac{1}{2\pi}f^*\omega
=-\dfrac{d}{2\pi}\int_{X^0}\omega,
$$
where $d=\deg(f)$. The proof is now complete.
\end{proof}

\begin{proof}[Proof of Corollary~\ref{inequality cor}]
Since $m_b\geq 1$ for every $b\in B$, by Theorem~\ref{integral formula} and Eq.(\ref{area formula}) in the Introduction, we have
\begin{eqnarray*}
|\mathfrak B_\sigma|&\leq&\sum_{b\in B\setminus S}(r_b-1)+d(g(X)-1)+\dfrac{d\nu_\infty(X)}{2}\\
&=&\sum_{b\in B}(r_b-1)+d(2g(X)-2)+\dfrac{d\nu_\infty(X)}{2}-\sum_{b\in S}(r_b-1)-d(g(X)-1)\\
&=&2g-2+d\nu_\infty(X)-\sum_{b\in S}(r_b-1)-d(g(X)-1)-\dfrac{d\nu_\infty(X)}{2}\\
&=&2g-2+|S|-d\left(g(X)-1+\dfrac{\nu_\infty(X)}{2}\right)\\
&=&2g-2-\deg(f^*(K_{X}\otimes S_{X})^{\frac{1}{2}}))+|S|
\end{eqnarray*}
\end{proof}

In~\cite{UU}, Ulmer-Urz\'ua obtained the following inequality (after incorporating some of our notations)
$$
|\mathfrak B_\sigma|\leq 2g-2-\deg(O^*(\Omega^1_{\mathcal E/B}))+|S|,
$$
where $O$ is the zero section of $\mathcal E$. This inequality is in fact equivalent to Corollary~\ref{inequality cor}, as we have
\begin{proposition}\label{line bundles}
$f^*(K_{X}\otimes S_{X})^{\frac{1}{2}}=O^*(\Omega^1_{\mathcal E/B})$
\end{proposition}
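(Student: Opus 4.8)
The plan is to split the identity into a statement on the modular curve $X=X_{\Gamma(k)}$ itself and a compatibility-under-pullback statement. First I would note that both sides are honest line bundles on $B$: since $\mathcal E\to B$ is relatively minimal, the zero section $O$ meets every fibre in a smooth point, so along $O$ the relative K\"ahler differentials $\Omega^1_{\mathcal E/B}$ agree with the relative dualizing sheaf $\omega_{\mathcal E/B}$ and $O^*\Omega^1_{\mathcal E/B}$ is invertible (likewise for $\mathcal M$ and $O_{\mathcal M}$). I would then reduce the Proposition to the two claims: \textbf{(A)} $O_{\mathcal M}^*\Omega^1_{\mathcal M/X}\cong(K_X\otimes S_X)^{1/2}$ on $X$, and \textbf{(B)} $O^*\Omega^1_{\mathcal E/B}\cong f^*\bigl(O_{\mathcal M}^*\Omega^1_{\mathcal M/X}\bigr)$. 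The Proposition is then their composite: $O^*\Omega^1_{\mathcal E/B}\cong f^*O_{\mathcal M}^*\Omega^1_{\mathcal M/X}\cong f^*(K_X\otimes S_X)^{1/2}$.

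For (B) I would argue that $\mathcal E$ coincides, in a neighbourhood of $O$, with the fibre product $B\times_X\mathcal M$ (equivalently, with its minimal desingularisation). Over $X^0$ this is just the defining property $\mathcal E^0\cong f|_{B^0}^*\mathcal M^0$, for which $\Omega^1_{\mathcal E^0/B^0}=(f^\sharp)^*\Omega^1_{\mathcal M^0/X^0}$ by base change of relative differentials. Over $S=f^{-1}(X\setminus X^0)$ it holds because the fibres of $\mathcal M$ there are N\'eron polygons $I_k$ (the modular surface $\mathcal M$ being semistable), so under a ramification point $b\in S$ of index $r_b$ they pull back to semistable fibres $I_{r_bk}$, which are again relatively minimal; the only singularities of $B\times_X\mathcal M$ are the $A_{r_b-1}$-points at the nodes of those degenerate fibres, and these are disjoint from $O$. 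Hence $\Omega^1_{\mathcal E/B}\cong p^*\Omega^1_{\mathcal M/X}$ near $O$ for the projection $p\colon B\times_X\mathcal M\to\mathcal M$, and since $p\circ O=O_{\mathcal M}\circ f$, pulling back along $O$ gives (B). This step is essentially the bookkeeping already implicit in the classifying-map setup, and I expect no real trouble here.

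For (A), over $X^0=\mathcal H/\Gamma$ the work is already in the paper: the identification $T(X^0_\Gamma)\cong(\sigma^*V)^{\otimes2}$ from Section~\ref{eta section}, built from the $\mathrm{SL}(2,\mathbb Z)\ltimes\mathbb Z^2$-action and encoded by $(O_{\mathcal M}^*dz)^{\otimes2}=d\tau$, gives $O_{\mathcal M}^*\Omega^1_{\mathcal M^0/X^0}=\sigma^*V^*\cong K_{X^0}^{1/2}$. Everything therefore comes down to the behaviour across the cusps, which I would treat with the toroidal coordinates of Section~\ref{toroidal}: reducing via $\mathrm{SL}(2,\mathbb Z)$ to the cusp $c\leftrightarrow i\infty$, with $q=e^{2\pi i\tau/k}$ a local coordinate on $X$ at $c$ and $\xi\zeta=q$, $\xi^n\zeta^{n+1}=e^{2\pi iz}$, the invariant differential $dz=\tfrac{1}{2\pi i}\bigl(n\,\tfrac{d\xi}{\xi}+(n+1)\,\tfrac{d\zeta}{\zeta}\bigr)$ is a nowhere-vanishing section of $\omega_{\mathcal M/X}$ over the whole $I_k$ fibre (this is the defining property of the relative dualizing sheaf along a semistable fibre, and matches the picture of the Jacobi/Hodge bundle on the toroidal compactification in \cite{BKK}). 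Thus $O_{\mathcal M}^*dz$ generates $O_{\mathcal M}^*\Omega^1_{\mathcal M/X}$ near $c$, while its square $(O_{\mathcal M}^*dz)^{\otimes2}=d\tau=\tfrac{k}{2\pi i}\tfrac{dq}{q}$ is a local generator of $\Omega^1_X(\log S_X)=K_X\otimes S_X$; matching these generators at every cusp, together with the isomorphism over $X^0$, yields (A). (Alternatively one may simply invoke the classical Kodaira--Spencer isomorphism $\lambda^{\otimes2}\cong\Omega^1_X(\log S_X)$ for the universal elliptic curve, with $\lambda=O_{\mathcal M}^*\omega_{\mathcal M/X}$.)

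The main obstacle is exactly (A) at the cusps: one must see that the correct extension of ``$\sqrt{K_{X^0}}$'' across $S_X$ is $(K_X\otimes S_X)^{1/2}$ and not $K_X^{1/2}$ --- equivalently, that the Hodge line bundle $O_{\mathcal M}^*\omega_{\mathcal M/X}$ has degree $g(X)-1+\nu_\infty(X)/2$, the half-integer shift being the logarithmic pole acquired by the invariant differential at a semistable degeneration. Pinning down the normalisations in the toroidal chart is where a little care is required. The ramification of $f$ along $S$ introduces no extra difficulty: it merely replaces the local base parameter by its $r_b$-th power on both sides simultaneously (turning $I_k$ into $I_{r_bk}$), which is the same bookkeeping already reflected in the ramification terms of Theorem~\ref{integral formula}.
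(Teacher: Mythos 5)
Your proposal is correct and follows essentially the same route as the paper: it first reduces to showing $O^*(\Omega^1_{\mathcal M/X})\cong (K_X\otimes S_X)^{1/2}$ via relative minimality (your step (B)), and then computes in the toroidal chart at a cusp that the invariant relative section extends without zeros across the smooth locus of the $I_k$ fibre while its square corresponds to $\tfrac{dq}{q}$, i.e.\ to a generator of $K_X\otimes S_X$, transporting to the other cusps by $\mathrm{SL}(2,\mathbb Z)$. The only cosmetic difference is that you work with the differentials $dz$ and $dq/q$ where the paper uses the dual vector fields $\tfrac{\partial}{\partial z}=2\pi i\bigl(-\xi\tfrac{\partial}{\partial\xi}+\zeta\tfrac{\partial}{\partial\zeta}\bigr)$ and $q\tfrac{\partial}{\partial q}$, which avoids the apparent poles of $\tfrac{d\zeta}{\zeta}$ along the singular fibre.
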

\begin{proof}
Since $\mathcal E$ is obtained by the classifying map $f:B\rightarrow X$ and $\mathcal E,\mathcal M$ are relatively minimal, it follows that $O^*(\Omega^1_{\mathcal E/B})=f^*(O^*(\Omega^1_{\mathcal M/X}))$, where we use the same symbol $O$ to denote the zero section of $\mathcal M$. Hence, it suffices to prove that 
$$
(K_{X}\otimes S_{X})^{\frac{1}{2}}=O^*(\Omega^1_{\mathcal M/X}).
$$

Let $c_\infty\in X$ be the cusp corresponding to $i\infty$. Denote a point on $\mathcal H\times\mathbb C$ by $(\tau, z)$. Recall that there exists a positive number $M$ such that for any $\tau,\tau'\in\mathcal H$ with $\im\tau>M$ and $\im\tau'>M$, the two points $\tau$ and $\tau'$ can be $\mathrm{SL}(2,\mathbb Z)$-equivalent only if $\tau'=\tau+m$ for some $m\in\mathbb Z$. Consequently, there exists some neighborhood $\mathcal U\subset X$ containing $O(c_\infty)$ such that the vector field $\dfrac{\partial}{\partial z}$ on $\mathcal H\times\mathbb C$ descend to $\mathcal U\cap X^0$. Now using the explicit coordinate charts on the toroidal compactification $X$ given in Section~\ref{toroidal}, it can be easily checked that $\dfrac{\partial}{\partial z}=2\pi i\left(-\xi\dfrac{\partial}{\partial\xi}+\zeta\dfrac{\partial}{\partial\zeta}\right)$  and so it extends to a non-vanishing section of $\Omega^{-1}_{\mathcal M/X}$ if $\mathcal U$ is sufficiently small. Thus, we get a non-vanishing section $O^*\left(\dfrac{\partial}{\partial z}\right)$ of $O^*(\Omega^{-1}_{\mathcal M/X})$ in a neighborhood of $c_\infty$ on $X$.  Moreover, by how $\mathrm{SL}(2,\mathbb Z)\ltimes\mathbb Z^2$ acts on $\mathcal H\times\mathbb C$, we deduce that once we have fixed an isomorphism $O^*(\Omega^{-2}_{\mathcal M^0/X^0})\cong T(X^0)$ (c.f. Section~\ref{eta section}), we have $O^*\left(\dfrac{\partial}{\partial z}\otimes \dfrac{\partial}{\partial z}\right)=a\dfrac{\partial}{\partial\tau}$ as local sections near $c_\infty$, where $a\in\mathbb C$ is a non-zero constant. Now in terms of the compactifying coordinate $q=e^{2\pi i\tau/k}$ on a neighborhood $U$ of $c_\infty\in X$, we have $\dfrac{\partial}{\partial\tau}=\dfrac{2\pi i}{k}q\dfrac{\partial}{\partial q}$ and so we get a sheaf isomorphism from $O^*(\Omega^{-2}_{\mathcal M/X})|_U$ to $(T(X)\otimes (-S_X))|_U$. 

Finally, if $c\in X$ is another cusp, we can conjugate everything by some $\gamma\in \mathrm{SL}(2,\mathbb Z)$ and repeat the previous argument. We have thereby shown that by fixing an isomorphism $O^*(\Omega^{-2}_{\mathcal M^0/X^0})\cong T(X^0)$, we get a sheaf isomorphism $O^*(\Omega^{-2}_{\mathcal M/X})\cong T(X)\otimes(-S_X)$ and the desired result follows.
\end{proof}

\subsection{The case for general non-isotrivial elliptic surfaces}\label{reduction section}

We will now consider a non-isotrivial elliptic surface $\pi:\mathcal E\rightarrow B$. 
In order to reduce the case with classifying maps, we first show that there exists a finite branched cover $\nu:B'\rightarrow B$, which is unbranched outside the points of bad reduction $S\subset B$, such that the pullback of $\mathcal E$ on $B'$ is birational to an elliptic surface given by a classifying map. 

In what follows, we will fix an integer $k\geq 3$, write $X = X_{\Gamma(k)}$ and $\pi_X:\mathcal M \to X$ for the elliptic modular surface with level-$k$ structure.

\begin{proposition}\label{lifting}
Let $\pi: \mathcal E \to B$ be a non-isotrivial elliptic surface, $B^0 \subset B$ be the dense Zariski open subset corresponding to regular fibers. Then, there exists an unramified finite cover $\nu_0: B^0_\sharp \to B^0$, which extends to a morphism $\nu: B_\sharp \to B$ for a complex projective curve $B_\sharp \supset B^0_\sharp$ such that $B^0_\sharp$ is Zariski open in $B_\sharp$, and such that there exists a classifying map $f_0: B^0_\sharp \to X^0$, which extends to a morphism $f: B_\sharp \to X$.
\end{proposition}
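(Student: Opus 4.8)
The plan is to obtain $B^0_\sharp$ as the finite étale cover of $B^0$ that kills the monodromy of the family modulo $k$, and then to invoke the fact that $X^0 = X^0_{\Gamma(k)}$ is a \emph{fine} moduli space of elliptic curves with level-$k$ structure (which holds precisely because $k\ge 3$, so that $\Gamma(k)$ acts freely on $\mathcal H$) to manufacture the classifying map. First I would work on the regular part $\pi^0:\mathcal E^0\to B^0$, a smooth family of elliptic curves, and consider the local system $\mathbb L:=R_1\pi^0_*\mathbb Z$ of first homology groups of the fibres, with fibre $\mathbb L_{b_0}\cong H_1(\mathcal E_{b_0},\mathbb Z)\cong\mathbb Z^2$. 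Choosing a symplectic basis identifies the monodromy with a representation $\rho:\pi_1(B^0,b_0)\to\mathrm{SL}(2,\mathbb Z)$ (the image lands in $\mathrm{SL}_2$ because monodromy preserves the intersection pairing on $H_1$ of a fibre). Composing with reduction modulo $k$ yields $\bar\rho:\pi_1(B^0,b_0)\to\mathrm{SL}(2,\mathbb Z/k\mathbb Z)$, whose image is finite, so $H:=\ker\bar\rho$ has finite index. Let $\nu_0^{\mathrm{top}}:B^0_\sharp\to B^0$ be the connected finite topological covering attached to $H$; by the Riemann existence theorem it is the analytification of a finite étale morphism $\nu_0:B^0_\sharp\to B^0$ of smooth quasi-projective curves.

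\textbf{Compactification.} Next I would let $B_\sharp$ be the normalisation of $B$ in the function field $\mathbb C(B^0_\sharp)$; this is a smooth projective curve (normalisation of a curve is regular), containing $B^0_\sharp$ as a Zariski-open subset, and $\nu_0$ extends to a finite morphism $\nu:B_\sharp\to B$ which is étale over $B^0$ and hence branched at most over $B\setminus B^0$, i.e. over the points of bad reduction.

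\textbf{The classifying map and its extension.} By construction the monodromy of the pulled-back family $\mathcal E^0_\sharp:=\nu_0^*\mathcal E^0\to B^0_\sharp$ is $\rho|_H$, which lands in $\Gamma(k)$; equivalently the local system $R_1(\pi^0_\sharp)_*(\mathbb Z/k\mathbb Z)$ is trivial, and a trivialisation of it is exactly a level-$k$ structure on $\mathcal E^0_\sharp$ (under $\mathcal E_b=\mathbb C/\Lambda_b$ one has $\mathcal E_b[k]=\frac1k\Lambda_b/\Lambda_b\cong\Lambda_b/k\Lambda_b$). Since $k\ge 3$, the curve $X^0=\mathcal H/\Gamma(k)$ is a fine moduli space with universal family $\pi_k:\mathcal M^0\to X^0$, so the pair consisting of $\mathcal E^0_\sharp$ together with this level structure is induced by a unique morphism $f_0:B^0_\sharp\to X^0$ with $\mathcal E^0_\sharp\cong f_0^*\mathcal M^0$; that is, $f_0$ is a classifying map. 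Because $\mathcal E$ is non-isotrivial, the $j$-invariant of $\mathcal E^0_\sharp$ is non-constant, and as it factors through $f_0$ followed by the $j$-map $X^0\to\mathbb A^1$, the map $f_0$ is non-constant. Finally $f_0:B^0_\sharp\to X^0\subset X$ is a rational map from a smooth projective curve to the projective variety $X$, so it extends to a (non-constant) morphism $f:B_\sharp\to X$, completing the construction.

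\textbf{Main obstacle.} The substantive point is the step producing $f_0$: one needs not just that the pulled-back fibres are elliptic curves of varying modulus, but an actual isomorphism $\mathcal E^0_\sharp\cong f_0^*\mathcal M^0$ of families over $B^0_\sharp$, and this is exactly where $k\ge 3$ enters, via the representability (rigidity, absence of $-I$) of the level-$k$ moduli problem. The remaining ingredients — algebraicity of the topological cover (Riemann existence) and extension of maps across the boundary (automatic for morphisms from a smooth curve to a projective variety, and for normalisations) — are routine for curves and should present no difficulty.
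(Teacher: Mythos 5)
Your construction of the cover is the same as the paper's: the subgroup $H=\ker\bar\rho=\rho^{-1}(\Gamma(k))$ you use coincides with the paper's $\rho^{-1}(\rho(\pi_1(B^0))\cap\Gamma(k))$, and both arguments then compactify the resulting finite unramified cover and invoke the level-$k$ structure on the pulled-back family to produce $f_0$. Where you genuinely diverge is the extension of $f_0$ across the punctures $B_\sharp\setminus B^0_\sharp$. The paper works analytically and quotes Borel's extension theorem (any holomorphic map from a punctured disk into $X^0=\mathcal H/\Gamma(k)$ extends holomorphically to the disk with values in the compactification $X$). You instead argue that $f_0$ is an \emph{algebraic} morphism — because $Y(k)=X^0$ is a fine moduli scheme for $k\ge 3$ and the family $\nu_0^*\mathcal E^0$ with its level structure is algebraic — so that $f_0$ is a rational map from the smooth projective curve $B_\sharp$ to the projective curve $X$ and extends by the standard curve-to-projective (valuative) criterion. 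Both routes are valid; note only that your route really does lean on algebraicity of $f_0$ (a merely holomorphic map on a punctured disk into a quasi-projective target need not extend, which is exactly why the paper needs Borel), and that algebraicity is supplied by the representability of the level-$k$ moduli problem together with the fact that $\mathcal E\to B$ is a projective family with a zero section. Your version is more self-contained on the moduli-theoretic side (the paper's ``by construction we have a classifying map'' is exactly the fine-moduli step you spell out), while the paper's use of Borel is more robust in that it would survive in purely analytic settings and generalizes to higher-dimensional arithmetic quotients, which is the context the authors have in mind for future work.
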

\begin{proof}
The elliptic surface $\mathcal E$ induces a representation $\rho$ of the fundamental group of $B^0$ in $\mathrm{SL}(2,\mathbb Z)$. Let $\Gamma_\sharp:=\rho(\pi_1(B^0))\cap\Gamma(k)\subset\mathrm{SL}(2,\mathbb Z)$. Since $\Gamma(k)$ is of finite index in $\mathrm{SL}(2,\mathbb Z)$, so is $\rho^{-1}(\Gamma_\sharp)\subset\pi_1(B^0)$. Thus, this gives a finite unramified cover $\nu_0:B^0_\sharp\rightarrow B^0$. It follows that $B^0_\sharp$ naturally admits the structure of a Riemann surface of finite type, and we denote by $B^0_\sharp \subset B_\sharp$ its compactification to a compact Riemann surface $B_\sharp$, from which automatically we have an extension of $\nu_0: B^0_\sharp \to B^0$ to a holomorphic map $\nu: B_\sharp \to B$.  

By construction we have a holomorphic classifying map $f_0: B^0_\sharp \to X^0$ such that $f_0^*\mathcal M$ is isomorphic to $\nu_0^*\mathcal E$. To finish the proof of the proposition, it remains to check that $f_0: B^0_\sharp \to X^0$ extends to a holomorphic map $f: B_\sharp \to X$. This follows from an extension theorem of Borel~\cite{Bo}, which states that any holomorphic map from a punctured disk $ \Delta^*$ to $X^0$ extends to a holomorphic map from $\Delta$ to $X$. We thereby obtain the desired holomorphic extension $f:B_\sharp\rightarrow X$. 
\end{proof}

Let $\pi:\mathcal E\rightarrow B$ a relatively minimal non-isotrivial elliptic surface over a complex projective curve $B$. Let $\nu:B_\sharp\rightarrow B$ be the finite branched cover and $f:B_\sharp\rightarrow X$ be the morphism into the modular curve $X$ which extends a classifying map $f_0:B^0_\sharp\rightarrow X^0$, given by Proposition~\ref{lifting}. By our construction of $f_0$ and $f$, there is a relatively minimal elliptic surface $\pi_\sharp: \mathcal E_\sharp \to B_\sharp$ which is birational to $\nu^*\mathcal E$ and also birational to $f^*\mathcal M$ $($while $\mathcal E^0_\sharp := \mathcal E_\sharp|_{B^0_\sharp}$ is isomorphic to $\nu_0^*\mathcal E$ and $f_0^*\mathcal M$ over $B^0_\sharp)$.

For any non-torsion section $\sigma$ of $\mathcal E$, by pulling back with $\nu^*$, we obtain a non-torsion section $\sigma_\sharp$ of $\mathcal E_\sharp$. Since $\mathcal E_\sharp$ has a classifying map, by Corollary~\ref{inequality cor} and Proposition~\ref{line bundles},
$$
|\mathfrak B_{\sigma_\sharp}|\leq 2g_\sharp-2-\deg(O_\sharp^*(\Omega^1_{\mathcal E_\sharp/B_\sharp}))+|S_\sharp|,
$$
where $g_\sharp$ is the genus of $B_\sharp$ and $S_\sharp:=B_\sharp\setminus B^0_\sharp$. As both $\mathcal E$ and $\mathcal E_\sharp$ are relatively minimal, there exist neighborhoods $U$, $U_\sharp$ of the images of zero sections $\mathcal O\subset\mathcal E$ and $\mathcal O_\sharp\subset\mathcal E_\sharp$ respectively, such that there is a fiber-preserving biholomorphism $U_\sharp\cong\nu^*U$. From this we deduce that $\nu^*(O^*(\Omega^1_{\mathcal E/B}))\cong O_\sharp^*(\Omega^1_{\mathcal E_\sharp/B_\sharp})$.
Let $g$ be the genus of $B$ and $n$ be the degree of $\nu:B_\sharp\rightarrow B$.
Recall that $\nu|_{B^0_\sharp}$ is unramified and let $r_{b_\sharp}$ be the ramification index of $\nu$ at a point $b_\sharp\in S_\sharp$. Then, 
\begin{eqnarray*}
|\mathfrak B_{\sigma}|&=&\dfrac{|\mathfrak B_{\sigma_\sharp}|}{n}\\
&\leq&\dfrac{1}{n}\left( 2g_\sharp-2-n\deg(O^*(\Omega^1_{\mathcal E/B}))+|S_\sharp|\right)\\
&=&2g-2+\dfrac{\sum_{b_\sharp\in S_\sharp}(r_{b_\sharp}-1)}{n}-\deg(O^*(\Omega^1_{\mathcal E/B}))+\dfrac{|S_\sharp|}{n}
\\
&=&2g-2-\deg(O^*(\Omega^1_{\mathcal E/B}))+|S|,
\end{eqnarray*}
where $S=\nu(S_\sharp)=B\setminus B^0$.

\vskip 0.4cm
\noindent
{\bf Acknowledgments.} \quad  For the research undertaken in the current article the first author was supported by GRF grant 17301518 of the HKRGC and the second author was partially supported by Science and Technology Commission of Shanghai Municipality (STCSM) (No. 13dz2260400). The first author's interests in applying complex geometry to arithmetic problems were very much rekindled upon exchanges with the late Professor Nessim Sibony on the interactions between complex geometry and number theory during and after a research visit to l'Universit\'e de Paris (Orsay) at the turn of the millennium. The second author has been lucky enough to have had many inspiring chats with the amiable Professor Sibony on numerous occasions in China and France.  Both authors would like to dedicate this article to the memory of Professor Sibony as a distinguished teacher and researcher, and as a leader in the mathematical community.

\vskip 0.4cm


\begin{thebibliography}{999999,}
%{999}
\bibitem[AMRT]{AMRT} Ash, A. Mumford, D. Rapoport, M., Tai, Y.: \textit{Smooth Compactifications of Locally Symmetric Varieties}, Cambridge University Press, 2010.

\bibitem [Bo]{Bo} Borel, A.: Some metric properties of arithmetic quotients of symmetric spaces and an extension theorem, {\it J. Differential Geom.} {\bf 6} (1972), 543--560.

\bibitem[BKK]{BKK} Burgos Gil, J.I., Kramer, J., K\"uhn, U.: The singularities of the invariant metric on the Jacobi line bundle. \textit{Recent advances in Hodge theory}, 45-77,
London Math. Soc. Lecture Note Ser., 427, Cambridge Univ. Press, 2016.


%\bibitem[Ch]{Ch} Chen, C.: Height functions on elliptic curves over function fields: a differential-geometric approach, \textit{Thesis}, 2011, The University of Hong Kong. {\bf dx.doi.org/10.5353/th\_b4704841}


\bibitem[CDMZ]{CDMZ} Corvaja, P., Demeio, J., Masser, D., Zannier, U.: On the torsion values for sections of an elliptic scheme, \textit{J. Reine. Angew. Math.} {\bf 2022} No.782 (2022), 1-41.

\bibitem[DM]{DM} DeMarco, L., Mavraki, N.M.: Variation of canonical height and equidistribution, \textit{Amer. J. Math.}, {\bf 142} (2020), 443-473.


%\bibitem[Ga]{Ga} Gaffney, M.P.: A Special Stokes's Theorem for Complete Riemannian Manifolds. \textit{Ann. Math.} Second series, {bf 60} No.1 (1954), 140-145.

%\bibitem[KK]{KK} Kiernan, P., Kobayashi, S.: Satake compactification and extension of holomorphic mappings. {\it Invent. Math.} {\bf 16}(1972), 237-248. 

%\bibitem[Ko]{Ko} Kobayashi, S.: {\it Hyperbolic complex spaces.} Grundlehren der Mathematischen Wissenschaften, {\bf 318}. Springer-Verlag, Berlin, 1998. 

\bibitem[Ma]{Ma} Manin, Y.: Rational points of algebraic curves over function fields, \textit{Izv. Akad. Nauk SSSR}, {\bf 27}(1963), 1395-1440.

\bibitem[Mo]{Mo} Mok, N.: Aspects of K\"ahler Geometry on Arithmetic Varieties, {\it Several Complex Variables and Complex Geometry}, Proceedings Symposia in Pure Mathematics, Volume 52, Part 2, AMS, (1991), 335-396. 

\bibitem[MT]{MT} Mok, N., To, W.-K.: Eigensections on Kuga families of abelian varieties and finiteness of their Mordell-Weil groups, \textit{J. Reine. Angew. Math.} {\bf 444}(1993), 29-78.

\bibitem [Sa]{Sa} Satake, I.: {\it Algebraic structures of symmetric domains},  Kan Memorial Lectures, Volume 4, Iwanami Shoten, Tokyo; Princeton University Press, Princeton, N.J., 1980.

\bibitem[Shm]{Shm}	Shimura, G.: \textit{Introduction to the arithmetic theory of automorphic functions}, Princeton University Press, 1971.

\bibitem [Sho]{Sho} Shioda, T.: On elliptic modular surfaces, {\it J. Math. Soc. Japan} {\bf 24} (1972), 20-59.

\bibitem[Si1]{Si1} Silverman, J.H.: \textit{The Arithmetic of Elliptic Curves}, Graduate Text in Mathematics, Springer Verlag 1986.

\bibitem[Si2]{Si2} Silverman, J.H.: \textit{Advanced topics in the arithmetic of elliptic curves}, Graduate texts in mathematics, Springer Verlag, 1994.

\bibitem[UU]{UU} Ulmer, D., Urz\'ua, G.: Bounding Tangencies of Sections on Elliptic Surfaces, \textit{Int. Math. Res. Not.}, {\bf 2021} No.6, (2021), 4768-4802.

\end{thebibliography}
\end{document}